\let\oldtocsection=\tocsection
\let\oldtocsubsection=\tocsubsection
\let\oldtocsubsubsection=\tocsubsubsection
\renewcommand{\tocsection}[2]{\hspace{0em}\oldtocsection{#1}{#2}}
\renewcommand{\tocsubsection}[2]{\hspace{1em}\oldtocsubsection{#1}{#2}}
\renewcommand{\tocsubsubsection}[2]{\hspace{2em}\oldtocsubsubsection{#1}{#2}}
\definecolor{darkgreen}{RGB}{47,139,79}
\definecolor{darkblue}{RGB}{36,24,130}
\newtheorem{thm}{Theorem}[section]
\newtheorem{lem}[thm]{Lemma}
\newtheorem{prop}[thm]{Proposition}
\newtheorem{rem}[thm]{Remark}
\newtheorem{ex}[thm]{Example}
\newtheorem{cor}[thm]{Corollary}
\newtheorem{Def}[thm]{Definition}
\newtheorem{Th}{Theorem}
\newcommand{\A}{\mathcal{A}}
\newcommand{\Ann}{\mathcal{A}nn}
\newcommand{\Ai}{\mathcal{A}_\infty}
\newcommand{\Ass}{\mathcal{A}ss}
\newcommand{\C}{\mathcal{C}}
\newcommand{\bC}{\overline{C}}
\newcommand{\bD}{\overline{D}}
\newcommand{\e}{\mathcal{E}}
\newcommand{\uj}{\underline{j}}
\newcommand{\uk}{\underline{k}}
\newcommand{\LL}{\mathcal{L}}
\newcommand{\N}{\mathbb{N}}
\newcommand{\pp}{\mathcal{P}}
\newcommand{\OO}{\mathcal{O}}
\newcommand{\OC}{\mathcal{OC}}
\newcommand{\QQ}{\mathbb{Q}}
\newcommand{\RR}{\mathbb{R}}
\newcommand{\SD}{\mathcal{SD}}
\newcommand{\Z}{\mathbb{Z}}
\newcommand{\Hom}{\textrm{Hom}}
\newcommand{\Comp}{\operatorname{Ch}}
\newcommand{\al}{\alpha}
\newcommand{\be}{\beta}
\newcommand{\ga}{\gamma}
\newcommand{\Ga}{\Gamma}
\newcommand{\De}{\Delta}
\newcommand{\la}{\lambda}
\newcommand{\s}{\sigma}
\newcommand{\Si}{\Sigma}
\newcommand{\surj}{\twoheadrightarrow}
\newcommand{\rar}{\longrightarrow}
\newcommand{\inc}{\hookrightarrow}
\newcommand{\sta}{\stackrel}
\newcommand{\arsim}{\sta{\simeq}{\rar}}
\newcommand{\minus}{\backslash}
\newcommand{\x}{\times}
\newcommand{\ot}{\otimes}
\newcommand{\w}{\wedge}
\newcommand{\lgl}{\langle}
\newcommand{\rgl}{\rangle}
\newcommand{\del}{\partial}
\newcommand{\oc}[2]{[\begin{subarray}{c} #2 \\ #1 \end{subarray}]}
\newcommand{\CPart}{\operatorname{K}}
\newcommand{\Nat}{\operatorname{Nat}}
\newcommand{\bNat}{\overline{\operatorname{Nat}}}
\newcommand{\Fun}{\operatorname{Fun}}
\newcommand{\Inj}{\operatorname{Inj}}
\newcommand{\End}{\operatorname{End}}
\newcounter{samcounter}
\renewcommand{\u}[1]{\underline{#1}}
\begin{document}

\bibliographystyle{plain}

\title[Universal operations in  Hochschild homology]{Universal operations in  Hochschild homology}

\author{Nathalie Wahl}
\email{wahl@math.ku.dk}

\date{\today}

\begin{abstract}
We provide a general method for finding all natural operations on the
Hochschild complex of $\e$-algebras, where $\e$ is any algebraic structure encoded in a prop with
multiplication, as for example the prop of Frobenius, commutative or
$A_\infty$-algebras. We show that the chain complex of all such natural operations is approximated by  a certain chain complex of {\em formal operations}, for
which we provide an explicit model that we can calculate in a number of cases.
When $\e$ encodes the structure of open topological conformal field
theories, we identify this last chain complex, up quasi-isomorphism,  with the moduli space of Riemann surfaces with boundaries,
thus establishing that the operations constructed by Costello and 
Kontsevich-Soibelman via different methods identify with all formal operations.
When $\e$ encodes open topological quantum field theories (or symmetric Frobenius algebras) our chain complex identifies with Sullivan diagrams, 
thus showing that operations constructed by Tradler-Zeinalian, again by different methods, account for all formal operations. As an illustration of the last result we
exhibit two infinite families of non-trivial operations  and use these to produce non-trivial higher string topology operations, which had so far been elusive. 

\end{abstract}

\maketitle

\section*{Introduction}

To any dg-algebra, or more generally $\Ai$--algebra $A$, one can associate the Hochschild chain complex $C_*(A,A)$ of the algebra with
coefficients in itself (as in e.g.~\cite[7.2.4]{KS06}).  Many authors have constructed operations on this complex, i.e.~natural maps 
$$\nu_A:C_*(A,A)^{\ot p}\to C_*(A,A)^{\ot q}$$
for some $p,q\ge 0$, under various assumptions on the type of $A$, see e.g.~\cite[Chap 4]{LodCyclic},\cite{NesTsy99} or \cite{KS06}.

In the present paper, we systematically address the question of finding the chain complex $\Nat_\e^\ot(p,q)$ of {\em all} such natural operations, for various classes of $\Ai$--algebras with extra structure $\e$  (associative, commutative, Frobenius,
\ldots).  We do this by introducing a chain complex $\Nat_\e(p,q)$ of all {\em formal operations}, loosely speaking obtained by forgetting a symmetric monoidal
condition in the definition of the algebras, and provide an explicit model for the chain complex of all formal operations
(Theorem~\ref{natintro}). We are able to
identify this chain complex up to quasi-isomorphism in terms of well-known objects, in several interesting cases (see Theorems~\ref{OCintro} and \ref{SDintro}). The chain complex of
formal operations comes with a chain map $r: \Nat_\e(p,q) \to  \Nat^\ot_\e(p,q)$ to the natural operations. While this map is not an isomorphism in
general, we do show that it has good properties, 
under various assumptions on $\e$, e.g., if $\e$ comes from an operad. And for a general $\e$ we identify the natural operations in terms of the formal operations of the
completion $\widehat \e$ of $\e$ via an isomorphism $\Nat_{\e}^\ot(p,q)  =  \Nat_{\widehat \e}^\ot(p,q) \xleftarrow{\cong} \Nat_{\widehat \e}(p,q)$.
Before detailing our computations, 
we start by making precise which types of algebraic structures we consider.

\medskip

An $\Ai$--algebra, as originally defined by Stasheff \cite{Sta63}, can be described as an enriched symmetric monoidal functor 
$\Phi:\Ai\rar\Comp$
from a certain dg-category  $\Ai$ with objects the natural numbers, to $\Comp$, the category of chain
complexes over $\Z$: the $\Ai$--algebra associated to such a functor $\Phi$ is its value at 1, together with  multiplications $m_i$, $i\ge 2$, 
encoded as morphisms in the category $\Ai$. 

Let $\e$ be any symmetric monoidal dg-category with objects the natural numbers
(i.e.~a prop, also denoted PROP  \cite[\S 24]{McL65}), equipped with a symmetric monoidal dg-functor $i:\Ai\to\e$ which is the identity on objects---we call such a pair $(\e,i)$ a {\em prop with $\Ai$--multiplication}. 
An $\e$-algebra is defined to be a strong symmetric monoidal functor $\Phi:\e\to\Comp$, i.e.~a functor
equipped with natural isomorphisms
$\Phi(1)^{\ot n}\sta{\cong}{\to} \Phi(n)$ compatible with the symmetries of $\e$. 
If $(\e,i)$ is a prop with $\Ai$--multiplication, $i$ endows $\Phi(1)$ with the structure of an $\A_\infty$--algebra. 
We define the Hochschild complex of the $\e$--algebra $\Phi$ with respect to $i$ as the standard Hochschild complex of the $\Ai$--algebra $\Phi(1)$, i.e.
$${\bf C}_*(\Phi):=C_*\big(\Phi(1),\Phi(1)\big)=\bigoplus_{n\ge 1}\Phi(1)^{\ot n}$$
with the standard Hochschild differential defined using the multiplication and higher multiplications of $\Phi(1)$.


A natural operation, with $p$ inputs and $q$ outputs, on the Hochschild complex of $\e$--algebras is the data of a linear map
$$\nu_\Phi:{\bf C}_*(\Phi)^{\ot p}\to {\bf C}_*(\Phi)^{\ot q}$$
for any $\e$--algebra $\Phi$, which is natural with respect to maps of $\e$--algebras. 
The set of such linear maps between chain complexes is again canonically a chain complex.
In other words, if we let $\Fun^\ot(\e,\Comp)$ denote the category of $\e$--algebras  and\\  ${C}_\e^{\ot p}:\Fun^\ot(\e,\Comp)\rar
\Comp$  the functor taking an $\e$--algebra $\Phi$ to the $p$th power of its Hochschild complex ${\bf C}_*(\Phi)^{\ot p}$, 
the natural  operations, with $p$ inputs and $q$ outputs,  on the Hochschild complex of
$\e$--algebras is the chain complex $\Hom({C}_\e^{\ot p}, {C}_\e^{\ot q})$.

More generally, we want to consider operations of the form 
$$\nu_\Phi:{\bf C}_*(\Phi)^{\ot n_1}\ot \Phi(1)^{\ot m_1}\to {\bf C}_*(\Phi)^{\ot n_2}\ot \Phi(1)^{\ot m_2}$$
allowing also copies of the algebra $\Phi(1)$ itself as input and output. Generalising the previous paragraph, the chain complex of those 
$(\oc{m_1}{n_1},\oc{m_2}{n_2})$--operations is
hence given by
$$\Nat_\e^\ot(\oc{m_1}{n_1},\oc{m_2}{n_2}) := \Hom ({C}_\e^{\ot (n_1,m_1)}, {C}_\e^{\ot (n_2,m_2)})$$ 
where  ${C}_\e^{\ot (n,m)}:\Fun^\ot(\e,\Comp)\rar \Comp$  is the functor  taking $\Phi$ to ${\bf
  C}_*(\Phi)^{\ot n}\ot \Phi(1)^{\ot m}$.  

One can in fact define a Hochschild complex ${\bf C}_*(\Phi)$ for {\em any} functor $\Phi: \e \to \Comp$ (not necessarily strong symmetric monoidal) by
setting $${\bf C}_*(\Phi):=\bigoplus_{n\ge 1}\Phi(n),$$
noting that the usual Hochschild differential is still well-defined.
Letting\\ ${C}_\e^{(n,m)}: \Fun(\e,\Comp)\rar \Comp$ more generally denote the analogous
extension of $C_\e^{\ot(n,m)}$, with value on $\Phi$ defined as a sum of terms $\Phi(k_1+\dots+k_n+m)$, we can hence consider the chain complex of 
all the natural $(\oc{m_1}{n_1},\oc{m_2}{n_2})$--operations
 $$\Nat_\e(\oc{m_1}{n_1},\oc{m_2}{n_2}) := \Hom(C^{(n_1,m_1)}_\e, C^{(n_2,m_2)}_\e)$$ on the Hochschild complex of generalized $\e$-algebras.



There is  a restriction map
$$r: \Nat_\e(\oc{m_1}{n_1},\oc{m_2}{n_2})\rar \Nat_\e^\ot(\oc{m_1}{n_1},\oc{m_2}{n_2})$$
which in general need neither be injective, surjective, nor a quasi-isomorphism. We think of the left-hand side as {\em formal
  operations} on $\e$-algebras, and refer to them as such.
We show that the prop $\e$ can always be replaced by a new prop $\widehat\e$, its {\em completion},  with same category of algebras and satisfying 
$$\Nat^\ot_{\e}(\oc{m_1}{n_1},\oc{m_2}{n_2})\cong\Nat^\ot_{\widehat\e}(\oc{m_1}{n_1},\oc{m_2}{n_2})\cong\Nat_{\widehat\e}(\oc{m_1}{n_1},\oc{m_2}{n_2}),$$
so natural operations can always be described as formal operations for a different prop 
(see Definition~\ref{compdef} and Corollary \ref{compcor})---the completion $\widehat\e$ is in fact the prop whose morphisms are $\widehat \e(m_1,m_2)= \Nat_\e^\ot(\oc{m_1}{0},\oc{m_2}{0})$. 

We show that the restriction map $r$ is injective (resp.~surjective or a quasi-isomorphism) for all  $(\oc{m_1}{n_1},\oc{m_2}{n_2})$ if and only if it is injective
(resp.~surjective or a quasi-isomorphism)
for all  $(\oc{m_1}{0},\oc{m_2}{0})$; or, said differently, it is injective if and only if any non-zero morphism $f \in \e(m_1,m_2)$ acts non-trivially on some $\e$-algebra,
it is surjective if any operation on $\e$--algebras is induced by $\e$, and it is a quasi-isomorphism if and only if $\e \to \widehat
\e$ is a quasi-isomorphism (see Theorem~\ref{compthm} for the first two statements and Corollary~\ref{Nathtpy} for the last one).
The map $r$ is for example always injective if $\e$ is the prop associated to an operad, and one can show that it is an isomorphism when restricting to 
operads and algebras in vector spaces,  over a field of characteristic 0 \cite{Fre04} (see Examples~\ref{opex1} and \ref{opex2} for more details).

Our main technical theorem, Theorem~\ref{natural}, gives an explicit description of the chain complex 
$\Nat_\e(\oc{m_1}{n_1},\oc{m_2}{n_2})$: 

\begin{Th}[see Theorem~\ref{natural}]\label{natintro} For any prop with $\Ai$--multiplication $(\e,i)$, there is an isomorphism of  chain complexes
$$\Nat_\e(\oc{m_1}{n_1},\oc{m_2}{n_2}))\cong \prod_{j_1,\dots,j_{n_1}\ge 1}\ \bigoplus_{k_1,\dots,k_{n_2}\ge 1}\!\! \e(j+m_1,k+m_2)[k-j+n_1-n_2],$$ 
where $j=j_1+\dots+j_{n_1}$, $k=k_1+\dots+k_{n_2}$, and where the differential on the right hand
side is the sum of the differential of $\e$, a multi-Hochschild and multi-coHochschild differential. The square brackets indicate a shift in grading.
\end{Th}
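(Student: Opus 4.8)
The plan is to identify the chain complex $\Nat_\e(\oc{m_1}{n_1},\oc{m_2}{n_2}) = \Hom(C^{(n_1,m_1)}_\e, C^{(n_2,m_2)}_\e)$ of natural transformations between functors on $\Fun(\e,\Comp)$ by a Yoneda-type argument. The source functor $C^{(n_1,m_1)}_\e$ takes $\Phi$ to a direct sum $\bigoplus \Phi(k_1+\dots+k_{n_1}+m_1)$ over $k_1,\dots,k_{n_1}\ge 1$; since each summand is a representable functor $\Phi\mapsto\Phi(N) = \Nat(\e(N,-),\Phi)$ — here using the enriched Yoneda lemma for functors $\e\to\Comp$, which identifies $\Hom(\e(N,-),\Phi)\cong\Phi(N)$ as chain complexes — the source is a (countable) coproduct of representables. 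First I would record the enriched Yoneda lemma in this dg-context, being careful that $\Hom$ of functors into $\Comp$ is itself the chain complex with the appropriate internal differential (this is where the shift $[k-j+n_1-n_2]$ and the summands of the total differential will ultimately come from).

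Next I would use that $\Hom(-,-)$ sends coproducts in the first variable to products: $\Hom\big(\bigoplus_{k_1,\dots} \e(j+m_1,-), C^{(n_2,m_2)}_\e\big)\cong\prod_{j_1,\dots,j_{n_1}\ge 1}\Hom\big(\e(j+m_1,-),C^{(n_2,m_2)}_\e\big)$, with $j=j_1+\dots+j_{n_1}$. (Note the indices become $j_1,\dots,j_{n_1}$ after renaming, matching the statement.) Then applying Yoneda again in the second variable, $\Hom(\e(j+m_1,-),C^{(n_2,m_2)}_\e)\cong C^{(n_2,m_2)}_\e\big(\e(j+m_1,-)\big)$ evaluated on the representable at $j+m_1$, which by definition of $C^{(n_2,m_2)}_\e$ unpacks to $\bigoplus_{k_1,\dots,k_{n_2}\ge 1}\e(j+m_1,k+m_2)$ with $k=k_1+\dots+k_{n_2}$. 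Assembling the two steps gives the underlying graded object on the right-hand side; the internal grading of the $\Hom$-complexes contributes the shift $[k-j+n_1-n_2]$ (each of the $n_1$ Hochschild summands in the source and $n_2$ in the target carries its own internal degree, and the Hochschild complex $\bigoplus_{n\ge1}\Phi(n)$ places $\Phi(n)$ in a degree shifted by $n-1$, or a suitable normalization thereof — I would fix the exact convention so the shifts add up to $k-j+n_1-n_2$).

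The remaining, and main, task is to match the differentials. The differential on $\Hom(C^{(n_1,m_1)}_\e,C^{(n_2,m_2)}_\e)$ is the usual one on a $\Hom$-complex: $\partial(f) = d_{\mathrm{target}}\circ f \pm f\circ d_{\mathrm{source}}$. Under the Yoneda identifications, the part of $d_{\mathrm{source}}$ and $d_{\mathrm{target}}$ coming from the internal differential of $\e$ (i.e. the dg-structure on the mapping complexes of $\e$) transports to the differential of $\e$ on $\e(j+m_1,k+m_2)$. The part coming from the Hochschild differential on $C_*(\Phi)$ — which is built from the $A_\infty$-multiplications $m_i\in\Ai(i,1)$ pushed into $\e$ via $i$ — acts on the source Hochschild tensor factors and on the target Hochschild tensor factors; precomposition with the source Hochschild differential becomes, under Yoneda, post-composition with the corresponding morphisms of $\e$, i.e. a \emph{coHochschild}-type differential raising the $j$-indices, while the target Hochschild differential becomes a \emph{Hochschild}-type differential lowering/raising the $k$-indices. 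This is exactly the ``multi-Hochschild and multi-coHochschild differential'' in the statement (one copy for each of the $n_1$ inputs and $n_2$ outputs that is a Hochschild complex rather than a plain $\Phi(1)$). The hard part will be this differential bookkeeping: writing the Hochschild differential on $C^{(n_i,m_i)}_\e$ explicitly as a sum of maps between the representables $\e(N,-)$ for varying $N$, verifying the requisite (co)associativity/naturality so that the Yoneda transport is compatible with taking (co)products, and nailing down all the Koszul signs and the precise degree shift so that the three pieces — $d_\e$, multi-Hochschild, multi-coHochschild — assemble into a single differential squaring to zero. I would organize this by first treating the case $n_1=n_2=1$, $m_1=m_2=0$ (a single Hochschild complex in and out) where the two differentials are the classical Hochschild and coHochschild differentials, and then bootstrap to the general multi-input/multi-output case by iterating the coproduct/product and Yoneda steps factor by factor.
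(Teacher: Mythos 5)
Your plan is correct and is essentially the paper's own proof: the paper's inverse map is exactly evaluation of a natural transformation on the representable functors $\e(j+m_1,-)$ at the universal elements $id_{j+m_1}\ot l_{\uj}$ (the enriched Yoneda step you describe), and the verification that this evaluation is a chain map is precisely your differential bookkeeping identifying $d_\e$ together with the multi-Hochschild and multi-coHochschild terms. The one point to keep explicit, as you already hint, is that the source functor is a sum of representables only at the graded level (the Hochschild differential mixes the $\uj$-summands), so the coproduct-to-product and Yoneda identifications are of graded objects and the differential is then transported afterwards --- which is exactly how the paper proceeds.
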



Theorem~\ref{natural} gives in addition a version for the reduced Hochschild complex when $\e$ has units. 
A version for higher Hochschild homology (in the sense of \cite{Pir00}) can be found in \cite{Angelathesis}. 

Using a spectral sequence for the right hand side, 
a corollary of the theorem is that $\Nat_\e$ is homotopy invariant in
$\e$ (see Corollary~\ref{Nathtpy}).

\medskip

We 
use the same spectral sequence to identify the homology of $\Nat_\e(\oc{m_1}{n_1},\oc{m_2}{n_2})$ in three cases detailed below: the case of unital 
$\Ai$--algebras, the case of Frobenius algebras, and the case of open field theories. Klamt computed in addition the case of commutative algebras in
\cite{Kla13A}, and, to a large extend, the case of commutative Frobenius algebras in \cite{Kla13B}. To further exemplify our approach, we show in Proposition~\ref{capprop} how the cap product in Hochschild homology can be seen as
part of the chain complex of formal operations $\Nat_\e(\oc{0}{1},\oc{0}{1})$ for $\e=\End(A)$ the endomorphism prop of the algebra $A$ considered. 

\medskip

Now, consider the case where $\e = \OO$, the open cobordism category. It is a  prop with morphisms $\OO(m_1,m_2)$ a
chain model given in terms of fat graphs for the moduli space of Riemann cobordisms from $m_1$ closed unit intervals to $m_2$ 
closed unit intervals. There is a  natural inclusion $i:\Ai\to\OO$ induced by including forests into all graphs, so $\OO$ is a prop with $\Ai$--multiplication. Furthermore, let
$\OC(\oc{m_1}{n_1},\oc{m_2}{n_2})$ denote a chain model for the moduli space of Riemann cobordisms from $n_1$ circles and $m_1$ intervals to $n_2$
circles and $m_2$ intervals, where each component of the cobordism has at
least one incoming or one free (i.e.~neither incoming nor outgoing) boundary component; see Section~\ref{OCsec}
for precise definitions. Theorem~\ref{OC} provides the following description of the formal operations on the Hochschild complex of open field
theories: 

\begin{Th}[see Theorem~\ref{OC}]\label{OCintro} There is an inclusion
$$\OC(\oc{m_1}{n_1},\oc{m_2}{n_2} )\stackrel\sim\inc \Nat_{\OO}(\oc{m_1}{n_1},\oc{m_2}{n_2})$$
which is a split-injective quasi-isomorphism.
\end{Th}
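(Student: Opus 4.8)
The plan is to deduce the statement from the explicit model of $\Nat_{\OO}$ furnished by Theorem~\ref{natural}, which presents $\Nat_{\OO}(\oc{m_1}{n_1},\oc{m_2}{n_2})$ as a product over $j_1,\dots,j_{n_1}\ge 1$ of direct sums over $k_1,\dots,k_{n_2}\ge 1$ of the shifted chain groups $\OO(j+m_1,k+m_2)$, equipped with the sum of the internal differential of $\OO$, a multi-Hochschild differential on the $n_1$ inputs and a multi-coHochschild differential on the $n_2$ outputs. The point is that this is exactly the shape of the open-closed graph complex: a closed boundary component of a Riemann cobordism, cut along the graph at its corners, becomes a cyclically ordered tuple of arcs, i.e.\ an object counted by $\OO$, and reading an incoming (resp.\ outgoing) closed boundary of combinatorial length $j_i$ (resp.\ $k_i$) as such a tuple realizes $\OC(\oc{m_1}{n_1},\oc{m_2}{n_2})$ inside the model of Theorem~\ref{natural}. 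So the two things to prove are that the resulting map $\iota$ is a chain map which is split injective, and that it is a quasi-isomorphism; and, as explained below, both reduce to the acyclicity of a single complementary summand.

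First I would construct $\iota$ from the fat-graph / open-closed graph model of $\OC$ set up in Section~\ref{OCsec} and check that it is a chain map. This is a bookkeeping step: one compares the cellular (edge-collapse) differential of the open-closed graph complex near a distinguished boundary cycle with the cyclic bar differential on the corresponding tuple of arcs. Collapses internal to the open part of a component give the differential of $\OO$; a collapse that shortens an incoming (resp.\ outgoing) boundary cycle by one contributes precisely one term of the (co)Hochschild differential, where the faces that forget a tuple entry correspond geometrically to merging two consecutive arcs of that boundary circle. The only real care needed is in matching the orientation/sign conventions of Section~\ref{OCsec} with the signs in the multi-Hochschild and multi-coHochschild differentials of Theorem~\ref{natural}, and in treating the reduced (unital) variant, for which I would use that $\OO$ has units coming from $i:\Ai\to\OO$.

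Next I would identify the image and the splitting. The image of $\iota$ is spanned by those tuples of arcs that assemble into an honest Riemann cobordism each of whose components carries an incoming or a free boundary component; the complement $Q$ is spanned by the tuples in which some component has only outgoing boundary, i.e.\ is "closed off" from the inputs. Since edge collapses act componentwise and never create an incoming or free boundary out of nothing, both the image and $Q$ are subcomplexes, so $\Nat_{\OO}(\oc{m_1}{n_1},\oc{m_2}{n_2})\cong \iota(\OC)\oplus Q$ as chain complexes; this already gives the split injectivity, and reduces the quasi-isomorphism claim to showing $H_*(Q)=0$. To organise the homology computation I would then run the spectral sequence used elsewhere in the paper for the right-hand side of Theorem~\ref{natural}, obtained by first passing to the homology of $\OO$ (the homology of the open cobordism category) and then taking the multi-(co)Hochschild differential; on $E^1$ the summand coming from $Q$ consists of "sourceless" pieces, and the claim is that the (co)Hochschild construction kills them.

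The main obstacle is precisely this vanishing $H_*(Q)=0$, equivalently the statement that applying the Hochschild (resp.\ coHochschild) construction to the open cobordism category genuinely closes an open boundary into a closed one on homology, without spurious contributions. Concretely one must show that the complex of open-closed graphs with a distinguished boundary cycle, restricted to configurations whose corresponding component never meets an incoming or free boundary, is contractible. I expect this to require either an explicit contracting homotopy of "glue in a strip" type or a discrete Morse / acyclic matching argument on the open-closed graph complex, carried out so that the contraction is compatible with the whole multi-Hochschild/multi-coHochschild differential at once (all $n_1+n_2$ boundary cycles simultaneously) rather than one cycle at a time, and again invoking the units of $\OO$ for the reduced version. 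This is the technical heart of the proof; the remaining identification $H_*\big(\iota(\OC)\big)\cong H_*(\OC)$ is immediate, so once $Q$ is shown acyclic the theorem follows.
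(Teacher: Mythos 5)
There is a genuine gap, in fact two. First, your description of the image of $\iota$ and of the complement $Q$ does not match what the map actually is. In the model of Theorem~\ref{natural}, $\Nat_\OO(\oc{m_1}{n_1},\oc{m_2}{n_2})$ is a {\em product} over $\uj=(j_1,\dots,j_{n_1})$ of (sums of) open-cobordism chain groups, and the inclusion (the functor $J_\OO$ of Section~\ref{OCsec}) sends a graph $G$ to the whole sequence $\{G\circ(l_{\uj}+id_{m_1})\}_{\uj}$, which has nontrivial components in {\em every} $\uj$--degree. So the image is not the span of a subset of generators singled out by a componentwise condition (``assembles into an honest cobordism with incoming or free boundary''), and its complement is not the span of ``closed-off'' configurations; note also that the target is not positively graded while $\OC$ is, which already rules out such a basis-level identification. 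Consequently your argument that ``edge collapses act componentwise, hence both the image and $Q$ are subcomplexes'' does not go through: the coHochschild part of the differential mixes different $\uj$--components. Split injectivity is true but is obtained differently in the paper, by the chain-level retraction that projects a sequence $\{G_{\uj}\}$ onto its $\uj=(1,\dots,1)$ component.

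Second, and more seriously, the quasi-isomorphism itself --- which you correctly identify as the heart of the matter --- is left as a hoped-for contracting homotopy or discrete Morse argument ``compatible with all $n_1+n_2$ boundary cycles simultaneously,'' and no such argument is supplied. The paper's proof is genuinely different: it closes up one incoming circle at a time (Lemma~\ref{OOn1}), identifying $D(\OC(\oc{-}{n_1},\oc{m_2}{n_2}))(m_1)$ with $\OC(\oc{m_1}{n_1+1},\oc{m_2}{n_2})$ up to quasi-isomorphism. The key mechanism is Lemma~\ref{OCK}, which replaces the coHochschild input by a tensor product of the components $\OC_S$ with the chain complex of the cosimplicial set $\CPart_c$ of complete configurations on the free and closed boundary of $S$; the appendix result (Proposition~\ref{cosimpcor}) then shows this configuration complex has homology concentrated in degree $0$, generated exactly by a single point alone on a circle, i.e.\ by a newly created incoming closed boundary. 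Convergence is handled by the Eilenberg--Moore comparison theorem for the complete filtration of the coHochschild complex (which is a product, so this is not automatic). None of these ingredients --- the one-circle-at-a-time reduction, the splitting off of the cosimplicial configuration complex, its acyclicity, and the convergence argument --- appear in your proposal, so the proof as written does not establish the theorem.
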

That the open-closed cobordism category $\OC$ can be used to construct natural operations on the Hochschild
complex of $\OO$--algebras was first discovered by Costello and Kontsevich--Soibelman via 
a priori different constructions \cite{costello07,KS06}. What we
show here is that these, up to quasi-isomorphism, both give all {\em formal} operations. 
This gives an answer to a wish expressed in \cite[1.3]{costello07} of a natural  algebraic characterization of the category of chains on 
moduli spaces of curves in terms of the functors which assign to an $\OO$--algebra the tensor powers of its Hochschild chains.

We also obtain a computation of the formal operations in the
case  of unital $\Ai$--algebras, i.e.\ $\e=\Ai^+$, from the computation of $\OO$, using that the prop of unital $\Ai$--algebras $\Ai^+$ is a subcategory of $\OO$. In fact $\Nat_{\Ai^+}(\oc{m_1}{n_1},\oc{m_2}{n_2})$ is a union of components of $\Nat_\OO(\oc{m_1}{n_1},\oc{m_2}{n_2})$, namely those
 associated to cobordisms which are a union of certain annuli and discs, generated up to the above quasi-isomorphism by the well-known operations, namely the $\Ai$--structure of the
 algebra, the inclusion of the algebra in its Hochschild complex, and Connes-Rinehart's boundary operator $B$ (see Theorem~\ref{Ai}). 

\medskip

Finally we consider the case 
$\e=H_0(\OO)$, the 0th homology of $\OO$. Symmetric monoidal functors $H_0(\OO)\to \Comp$ are also known as open topological quantum field theories,
which by \cite[Cor 4.5]{LauPfe} correspond precisely to symmetric Frobenius algebras. 
In \cite{Tradler-Z}, Tradler and Zeinalian show that a certain chain complex of Sullivan diagrams acts on the Hochschild cochain complex of
symmetric Frobenius algebras. (See also \cite{WahWes08} where the dual action on the Hochschild chains is constructed.) 
Our Theorem \ref{H0OC} shows that these define all formal operations in the following sense:

\begin{Th}[see Theorem~\ref{H0OC}]\label{SDintro} 
There is an inclusion
$$\SD(\oc{m_1}{n_1},\oc{m_2}{n_2}) \stackrel\sim\inc \Nat_{H_0(\OO))}(\oc{m_1}{n_1},\oc{m_2}{n_2}) $$
which is a split-injective quasi-isomorphism
where  $\SD(\oc{m_1}{n_1},\oc{m_2}{n_2})$  is a chain complex of Sullivan diagrams on $n_2$ circles, with $n_1$ ``incoming'' boundary
cycles and $m_1+m_2$ labeled leaves. 
\end{Th}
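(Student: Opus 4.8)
The plan is to deduce Theorem~\ref{SDintro} from Theorem~\ref{natintro} (the explicit model of $\Nat_\e$) applied to $\e = H_0(\OO)$, exactly in parallel with the strategy used for Theorem~\ref{OCintro}. First I would recall that by Theorem~\ref{natintro} the chain complex $\Nat_{H_0(\OO)}(\oc{m_1}{n_1},\oc{m_2}{n_2})$ is computed by the explicit double (really multiple) complex
$$\prod_{j_1,\dots,j_{n_1}\ge 1}\ \bigoplus_{k_1,\dots,k_{n_2}\ge 1}\!\! H_0(\OO)(j+m_1,k+m_2)[k-j+n_1-n_2],$$
with differential the sum of a multi-Hochschild and multi-coHochschild differential (the internal differential of $H_0(\OO)$ being zero). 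So the entire problem reduces to understanding this combinatorially defined complex: its generators are morphisms in $H_0(\OO)$, i.e.\ components of the moduli space of open cobordisms, equivalently equivalence classes of fat graphs / black-and-white graphs, together with a choice of how the $j$ Hochschild-inputs are distributed among the $n_1$ ``circle'' slots and similarly for outputs.

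Second, I would identify the quotient complex $\SD(\oc{m_1}{n_1},\oc{m_2}{n_2})$ of Sullivan diagrams inside this model. A Sullivan diagram on $n_2$ circles with $n_1$ incoming boundary cycles and $m_1+m_2$ labeled leaves should be exactly the combinatorial datum one gets by taking the above generators and collapsing, via the Hochschild and coHochschild differentials, the ``free'' inputs/outputs: the coHochschild differential on the $k$-variables precisely builds the cyclic words around the $n_2$ output circles, and the product over the $j$-variables records the boundary cycles that are being read as Hochschild inputs. Concretely, I expect $\SD$ to sit as a subcomplex (via the splitting) and as the $E_1$-page, or a sub-quotient capturing the homology, of the filtration of $\Nat_{H_0(\OO)}$ by the number of Hochschild strands; one runs the spectral sequence of Corollary-type arguments used for $\OO$ and observes that taking $H_0$ has the effect of replacing fat graphs by their underlying ``Sullivan'' equivalence classes, i.e.\ exactly collapsing the moduli of a graph to a point while remembering only the boundary combinatorics. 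The split injectivity should come, as in Theorem~\ref{OCintro}, from an explicit section: a Sullivan diagram determines a canonical element of the product-sum complex (put each labeled leaf on its slot, read off the cyclic words), and one checks this is a chain map with left inverse given by the projection.

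Third, the quasi-isomorphism claim: this is where the real work lies. One must show that the inclusion $\SD \hookrightarrow \Nat_{H_0(\OO)}$ induces an isomorphism on homology, equivalently that the complementary summand (the ``non-Sullivan'' part of the multi-complex) is acyclic. Since the internal differential of $H_0(\OO)$ vanishes, all the work is carried by the multi-Hochschild and multi-coHochschild differentials, and the claim becomes a purely combinatorial acyclicity statement about cyclic (co)bar-type complexes built on the sets $H_0(\OO)(a,b)$. I would filter by, say, total number of vertices/edges in the underlying graph, or by the bar-degrees $j$ and $k$, and reduce to showing that for a fixed topological type of cobordism the associated ``local'' Hochschild/coHochschild complex computes the right thing — morally that Hochschild homology of a disjoint union with respect to the interval-multiplication, on the combinatorial side, is concentrated where Sullivan diagrams live. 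This is the analogue of the computation (cited before Theorem~\ref{SDintro}) that the Sullivan diagram complex computes the $H_0(\OO)$-operations, and I would expect to invoke or adapt the corresponding statement for $\OO$ itself from Theorem~\ref{OCintro}, applying $H_0$ levelwise and controlling the resulting spectral sequence.

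The main obstacle, I expect, is precisely this last acyclicity/quasi-isomorphism step: passing to $H_0(\OO)$ destroys the homological control one had for $\OO$ (where fat graphs carry a grading making the moduli-space comparison clean), so one cannot simply quote Theorem~\ref{OC} and must instead re-run the spectral-sequence argument directly on the combinatorial complex, checking that the higher (co)Hochschild differentials kill everything outside the Sullivan-diagram subcomplex. Setting up the right filtration so that the $E_1$-term is visibly $\SD$ — and verifying the diagrams that show the section is a chain map compatible with the labeled leaves and the distribution of Hochschild strands among circles — is the part requiring genuine care; the identification of $\SD$ as a chain complex ``on $n_2$ circles with $n_1$ incoming boundary cycles and $m_1+m_2$ labeled leaves'' then follows by unwinding definitions in Section~\ref{OCsec}.
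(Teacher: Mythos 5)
Your overall frame---model $\Nat_{H_0(\OO)}$ via Theorem~\ref{natural} and compare it with $\SD$ by a spectral sequence in the coHochschild direction---does match the paper's strategy in outline, but the proposal leaves out the ideas that actually carry the quasi-isomorphism, and the way you propose to locate $\SD$ inside the model is not how it sits there. The paper's starting point is the isomorphism $\SD(\oc{m_1}{0},\oc{m_2}{n_2})\cong \bC^{n_2}\big(H_0(\OO)(m_1,-)\big)(m_2)$ obtained by cutting at the white vertices (\cite{WahWes08}, Lem.~6.6), so for $n_1=0$ there is nothing to prove; all the work is in $n_1>0$, where the inclusion is not ``put each labeled leaf in its slot'' but $G\mapsto\{G\circ(l_{\uj}+id_{m_1})\}_{\uj}$: one compares a Sullivan diagram whose $i$th incoming circle is a single leaf alone in its boundary cycle with the iterated coHochschild complex $\bD^{n_1}\big(\SD(\oc{-}{0},\oc{m_2}{n_2})\big)(m_1)$, in which that circle has been resolved into arbitrarily many cyclically ordered open inputs. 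There is no ``complementary acyclic summand'' of the product-sum model to kill, nor is $\SD$ an $E_1$-page of a filtration by Hochschild strands; the two complexes are genuinely different, and the theorem is proved by showing the explicit map is a quasi-isomorphism one coHochschild direction at a time (the analogue of Lemma~\ref{OOn1}).

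The two ingredients you would need, and do not supply, are the following. First, the comparison of $\SD(\oc{m_1+-}{n_1},\oc{m_2}{n_2})$, as a functor on $\Ai^{op}$, with the functor $(\SD\ot K)$ that splits off the configuration data of the extra open inputs into the complete-partition sets $\CPart_c(q,\coprod_b S^1\cup\del_IS)$; combined with the fact that the chain complex of a cosimplicial set has homology concentrated in degree $0$ (Proposition~\ref{cosimpcor}, via Lemma~\ref{ZK}), this is what makes the $E^2$-page of the coHochschild filtration collapse onto $\SD$ with one more incoming closed boundary, after which the Eilenberg--Moore comparison theorem finishes the induction. Second, the proof of that splitting in the $H_0$ case, which is exactly the ``real work'' you flag but leave open: the moduli-space/mapping-class-group argument that proves the corresponding Lemma~\ref{OCK} for $\OO$ is unavailable after taking $H_0$, and the paper replaces it (Lemma~\ref{SDK}) by an explicit right inverse $\gamma_q$ (re-attach the forgotten leaves as a trivalent tree in the order recorded by the configuration) together with an explicit chain homotopy $\gamma_q\circ\beta_q\simeq id$ that slides leaves past white vertices, using precisely the defining relation of Sullivan diagrams. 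Without this homotopy, or some substitute for it, your proposed filtration ``by number of vertices/edges or by bar-degrees'' has no identified $E^1$-term, and the acyclicity you hope for is exactly the statement that remains to be proved.
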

The chain complex $\SD(\oc{m_1}{n_1},\oc{m_2}{n_2})$ is a quotient of $\OC(\oc{m_1}{n_1},\oc{m_2}{n_2})$
with the same $H_0$, and is briefly recalled in Section~\ref{SDsec}. It computes the homology of the harmonic compactification of moduli space
\cite{EgaKup}. 

Beyond providing a model for the right-hand side, Theorem~\ref{SDintro} can also be used ``in reverse'' to give new information about the left-hand side:  
The theorem indeed implies that the Hochschild homology of Frobenius algebras can be used as representation of the homology of Sullivan diagrams, and  
in Section~\ref{non-trivial} we use this representation for the Frobenius algebra $H^*(S^n)$ to produce two infinite families of non-trivial homology classes in Sullivan diagrams of increasing degree   
(non-trivial both with integral and rational coefficient). 
The above cycles furthermore give rise to rational higher string operations, since for any 1--connected
manifold $M$ there is an $H_0(\OO,\QQ)$--algebra $A(M)$ whose Hochschild homology is isomorphic to $H^*(LM,\QQ)$ (using \cite{lambrechts_stanley}
and \cite{jones}, see also \cite{felix_thomas}  or \cite[Sec 6.6]{WahWes08}), and these operations  are non-trivial since they are non-trivial on
$HH_*(H^*(S^n,\QQ),H^*(S^n,\QQ))\cong H^*(LS^n,\QQ)$ by the same computation. As far as we know, these are the first non-trivial higher string topology operations
constructed. (Note also that our higher degree cycles live in components of arbitrarily high genus and number of incoming
boundary components, which should be contrasted to a result of Tamanoi \cite{tamanoi} that states that for large genus or number of incoming boundaries, the {\em degree 0} string topology 
operation on $H^*(LM)$ constructed in \cite{CohGod,godin07} are zero.)




\medskip

The paper is organised as follows: 
A technical reformulation of Theorem~\ref{natintro}
is the statement that formal operations on the Hochschild complex of $\e$--algebras are given by taking an
iterated Hochschild and then coHochschild construction on the category $\e$.
Section~\ref{sec1} defines the Hochschild and coHochschild complex, and their reduced versions, 
in the generality needed in this paper, and establishes basic properties of these, which are used in
Section~\ref{natsec} to give a proof of Theorem~\ref{natintro}. In Section~\ref{natsec} we also compare the formal operations to the natural
operations, and give the cap-product example mentioned earlier.  
In Section~\ref{compsec}  we identify the formal operations in the case of open field
theories, Frobenius algebras, and $\Ai$--algebras, proving in particular Theorems~\ref{OCintro} and \ref{SDintro},  building on our joint work with
Westerland \cite{WahWes08}. The main ingredient in the proof in each case is a computation of the homology of the relevant iterated coHochschild constructions in terms of chain complexes associated to certain 
cosimplicial sets of partitions. In  the process we need the general fact that the homology of the canonical chain complex associated to a cosimplicial set is
concentrated in degree 0, the proof of which we postpone to Section~\ref{cosimpsec}.  
Finally Section~\ref{non-trivial} gives examples of non-trivial operations on the Hochschild complex of Frobenius algebras, and in particular of
non-trivial higher string operations. 
Section~\ref{funsec} sets up the notations and conventions about functors and dg-categories used in the paper, and the final Section~\ref{GraphApp}
recalls how to define complexes of fat graphs, and exemplifies their use in the context of $\Ai$--algebras. 

\medskip

\noindent
{\em Acknowledgment.} This paper is the continuation of the author's joint work with Craig Westerland \cite{WahWes08}. In fact, this paper was started
as an elaboration on an suggestion of Craig that the coHochschild complex could play a role. Many other people have influenced this paper 
with comments, questions and answers.  In
particular, I would like to thank Alexander Berglund, Bill
Dwyer, Beno\^it Fresse, Anssi Lahtinen and Bruno Vallette. I would furthermore like to thank the referee for many helpful suggestions. 
The author
was supported  by the Danish National Sciences Research Council  (DNSRC)  and
the European Research Council (ERC), as well as by
the Danish National Research Foundation through the Centre for Symmetry and Deformation (DNRF92). 
The author would also like to thank IHES for its hospitality during the final stages of the redaction of this article.

\tableofcontents

\setcounter{section}{-1}

\section{Functors and dg-categories; notations and conventions}\label{funsec}

We work throughout this paper with dg-categories, that is categories enriched over chain complexes, by which we mean differential graded $\Z$--modules. These are categories $\C$ whose morphism sets $\C(n,m)$ are chain complexes and such that composition in the category is given by chain maps $$\C(n,p)\ot \C(p,m)\rar \C(n,m).$$  
We will denote by $\Comp$ the dg-category of chain complexes, i.e. the category whose objects  are chain complexes and whose morphisms are linear maps
of any degree. We denote by $\Hom(V,W)$ the chain complexe of morphisms from $V$ to $W$ in $\Comp$. The differential on $\Hom(V,W)$ is defined so that the evaluation map $V\ot \Hom(V,W)\to W$ is a chain map for every pair of
chain complexes $V,W$. Explicitly, for $f\in \Hom(V,W)$, this gives $df(v)=(-1)^{|v|}(d_W(f(v))-f(d_V(v)))$. In particular, chain maps $V\to W$ are the
degree 0 cycles in $\Hom(V,W)$. 

A dg-functor $F:\C\to \Comp$ is an enriched functor, that is a functor such that the maps 
$$F(n)\ot \C(n,m)\rar F(m)$$
are chain maps. 

Note that our convention in this paper is that morphisms act on the right. 
This has an influence on the signs we work with. If a reader wants to compare our signs to those one would obtain having the morphisms act on the
left, this can be done by  
 multiplying with the Koszul signs coming from permuting the factors appropriately before and, if relevant, after the map or identification considered.

\medskip

Let $\C$ be a small dg-category.  
Given dg-functors $F,G:\C\to\Comp$, let $$\Hom_\C(F,G)\ \ \subset\ \ \prod_{k\in Obj(\C)}\Hom(F(k),G(k))$$ 
denote the chain complex of natural transformations $F\to G$. 

Given dg-functors $F:\C\to\Comp$ and $G:\C^{op}\to\Comp$, we denote by 
$$F\ot_{\C}G=\bigoplus_{k\in Obj(\C)} F(k)\ot G(k)/\sim$$
 the tensor product of $F$ and $G$, where the equivalence relation is given by $F(f)(x)\ot y\sim (-1)^{|y||f|}x\ot G(f)(y)$ for any
$x\in F(k)$, $y\in G(l)$ and $f\in \C(k,l)$. This is a chain complex with differential $d=d_F+d_G$ (with the usual Koszul sign convention).

\medskip

In this paper, functors describing algebras will be monoidal functors. Note that those will always be {\em strong monoidal} (also called split monoidal), i.e. satisfying that $F(n)\ot F(m)\to F(n+m)$ is an isomorphism. 

\medskip 

By a {\em quasi-isomorphism of functors} with values in chain complexes, we mean a natural transformation given by pointwise quasi-isomorphisms.

\section{The Hochschild and coHochschild complexes}\label{sec1}

In this section, we recall from \cite{WahWes08} our generalization of the Hochschild complex of a dg-algebra, and define its dual, a generalization of the coHochschild
complex of a dg-coalgebra. We then show that our coHochschild complex is homotopy invariant, and show the equivalence between the reduced and unreduced
(co)Hochschild constructions when the algebras have units. The homotopy invariance of the Hochschild complex is proved in
\cite[Prop.~5.7]{WahWes08}.

\bigskip

We recall briefly the prop $\Ai$, already mentioned in the introduction. We will use the language of graphs recalled in the Appendix Section~\ref{GraphApp}. 
$\Ai$ is a dg-category with objects the natural numbers $\N=\{0,1,2,\dots,\}$ and with morphisms $\Ai(n,m)$ the free
$\Z$--module on the  set of graphs which are 
unions of $m$ planar trees, with a total of $n=n_1+\dots+n_m$ {\em incoming} labeled leaves, with each $n_i\ge 1$, in addition to the root of each
tree,  considered here as an outgoing leave.  (See Figure~\ref{Aimk}(a) for an example.) From $n=0$, there is only the identity morphism. As detailed in Section~\ref{GraphApp}, $\Ai(n,m)$ can be given the structure of a chain complex using the valence minus 3 of vertices to define  the degree, and the sum of all possible blow-ups to define the differential.  
Composition in $\Ai$ is defined by gluing outgoing leaves to incoming leaves with the same label, and
disjoint union of trees defines a symmetric monoidal structure on $\Ai$. 

Recall that an $\Ai$--algebra is a dg-module $A$ equipped with multiplications $m_2,m_3,\dots$ where $m_k$ is a degree $k-2$ map $m_k:A^{\ot k}\to A$. These maps satisfy relations among them which make $m_2$ into a multiplication associative up to all higher homotopies.   
The category defined above is called $\Ai$ because symmetric monoidal functors $\Phi:\Ai\to\Comp$ correspond exactly to 
(non-unital) $\Ai$-algebras. Such a functor $\Phi$ will satisfy $\Phi(n)=A^{\ot n}$ for some dg-module $A$, and the multiplication $m_k$ is obtained by evaluating $\Phi$ on the morphism in $\Ai(k,1)$ defined by a tree with a single vertex (see Figure~\ref{Aimk}(b)). The relations satisfied by the $m_k$'s follow from the dg-structure of $\Ai$ and the fact that $\Phi$ is a dg-functor.  (See Example~\ref{m3ex} and e.g.~\cite[3.1]{WahWes08} and \cite[C.2 and 9.2.7]{LodVal} for more details.)
\begin{figure}[h]
\begin{lpic}{Aiandmk(0.62,0.62)}
\lbl[b]{55,0;(a)}
\lbl[b]{155,0;(b)}
\end{lpic}
\caption{Morphism in $\Ai(6,2)$ and the map $m_k\in\Ai(k,1)$.}\label{Aimk}
\end{figure}

Dually, an $\Ai$--coalgebra is a functor $\Psi:\Ai^{op}\to \Comp$. 

\bigskip

We will define the (co)Hochschild complex of a (co)algebra using a functor 
$$\LL:\Ai^{op}\rar \Comp$$
which we now define. As a graded module, $\LL(m)=\bigoplus_{n\ge 1}\Ai(m,n)[n-1]$, where $[n-1]$ indicates a shift in degree by $[n-1]$, and $\Ai$ acts on $\LL$ by precomposition. (Here we use the notation $V[n]$ for the graded module defined by $V[n]_*=V_{*-n}$.) 
To describe the differential, we write $\LL(m)$ as   
$$\LL(m)=\bigoplus_{n\ge 1}\Ai(m,n)\ot L_n$$
where $L_n=\lgl l_n\rgl$ the free module on a single generator $l_n$ in degree $n-1$. 
The differential on $\LL(m)$ is the sum of the differential $d_{\Ai}$ of the first factor with a twisted differential $d_L$ coming from the second factor, which we describe now, first pictorially and then algebraically.
  
The pictorial way to define the differential is to think of the generator $l_n$ of $L_n$ as an oriented fat graph with a single (white) vertex and $n$ ordered leaves attached to it. The differential of $l_n$ is then the sum of all ways of blowing up that vertex, just like we blew up trees in $\Ai$, with the only difference that we blow up a white vertex as a pair of a white and a black vertex, and that the white vertex is allowed to have valence 1 or 2. Figure~\ref{dl3} shows the graph $l_3$ and all its blow-ups. 
\begin{figure}[h]
\includegraphics[width=\textwidth]{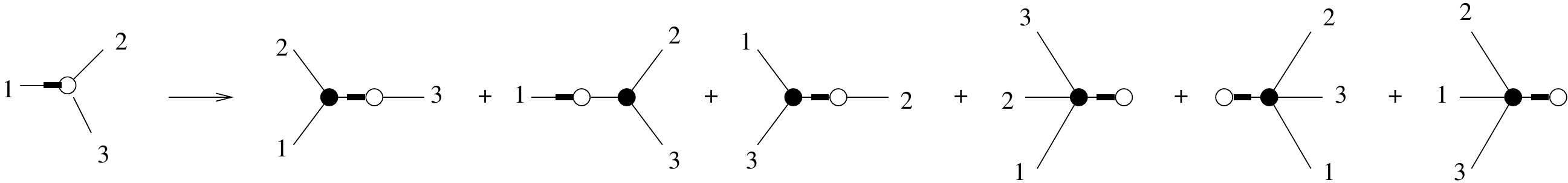}
\caption{Differential applied to $l_3$.}\label{dl3}
\end{figure}
As a blow-up of $l_n$ will always be of the form a tree attached to an $l_k$ with $1\le k<n$, we can write the differential as a map 
$$d_L:L_n\to \bigoplus_{1\le k<n} \Ai(n,k)\ot L_k.$$
This then induces a self-map of $\LL(m)$ using composition in $\Ai$: 
 $$d_L:\LL(m)=\bigoplus_{n\ge 1}\Ai(m,n)\ot L_n\to \bigoplus_{n,k\ge 1}\Ai(m,n)\ot\Ai(n,k)\ot L_k\to \bigoplus_{k\ge 1}\Ai(m,k)\ot L_k$$

We can describe $d_L$ explicitly in terms the $\Ai$--multiplications $m_k$. 
We give here the description without signs. The signs are more easily thought of as orientations of graphs. (See Example~\ref{m3ex} for how to interpret graph orientations as signs.)
We can decompose the differential as 
$$d_L(l_n)=\sum_{k<n}f_{n,k}\ot l_k$$  where $f_{n,k}\in \Ai(n,k)$ 
is itself given as 
\begin{equation}\label{fnk}
f_{n,k}=\sum_{i=1}^n\, m^i_{n,k} = \sum_{i=1}^n\pm\, m^i_{n-k+1} 
\end{equation}
with $m^i_{n-k+1}\in \Ai(n,k)$ the morphism that multiplies with $m_{n-k+1}$ the entries $i,i+1,\dots,i+n-k$ (considered modulo $n$) and $m^i_{n,k}=\pm m^i_{n-k-1}$ defined to contain the sign/orientation it receives in the differential.  The term $m^i_{n,k}$ corresponds, in the pictorial description, to the graph obtained from  $l_n$ by blowing up its leaved labeled  $i,i+1,\dots,i+n-k$ (modulo $n$) to form a tree with one vertex attached to $l_k$ at the $i$th position. 


\begin{rem}\label{LLgraphs}{\rm 
We can now think of an element of $\LL(m)$ as a sum of graphs with $m$ leaves obtained by attaching trees (the elements of $\Ai(m,n)$) to a white vertex ($l_n$), as in Figure~\ref{Ann}. Such graphs are particular examples of  black and white graphs in the language of \cite{WahWes08} and the differential of $\LL(m)$ is the differential of black and white graphs as defined in Section 2 of that paper. (See also Section~\ref{OCsec}.)}
\begin{figure}[h]
\begin{lpic}{Ann(0.62,0.62)}
\end{lpic}
\caption{Element in $\LL(12)$, decomposable as
an element of $\Ai(12,5)\ot L_5$.}\label{Ann}
\end{figure}
\end{rem}


We are now ready to define our version of the  Hochschild complex. 
Let $\e$ be a monoidal dg-category. Given a dg-functor $\Phi:\e\to\Comp$ and an object $m\in \e$, we can define a new functor 
$$\Phi(-+m):\e\to\Comp$$ by setting 
$\Phi(-+m)(n)=\Phi(n+m)$ and $\Phi(-+m)(f)=\Phi(f+id_m)$. Note that for any morphism $g\in \e(m,m')$, $\Phi(id+g)$ induces a natural
transformation $\Phi(-+m)\to \Phi(-+m')$. 

\smallskip

Recall that we call a pair $\e=(\e,i)$ a prop with $\Ai$--multiplication if $\e$ is a symmetric monoidal dg-category and $i:\Ai\to\e$ is a
symmetric monoidal dg-functor which is the identity
on objects. 

\begin{Def}\label{HcHDef} Let $(\e,i)$ be a prop with $\Ai$--multiplication. 
For a dg-functor $\Phi:\e\to \Comp$, define its {\em Hochschild complex} as the dg-functor $C(\Phi):\e\to\Comp$ given on objects by 
$$C(\Phi)(m):=i^*\Phi(-+m)\ot_{\Ai}\LL$$
and on morphisms by  
$$C(\Phi)(f):=i^*\Phi(id+f)\ot_{\Ai} id.$$
Dually, for a dg-functor $\Psi:\e^{op}\to \Comp$, we 
define its {\em coHochschild complex} as the dg-functor
$D(\Psi):\e^{op}\to\Comp$ given on objects by  
$$D(\Psi)(m):=\operatorname{Hom}_{\Ai^{op}}\big(\LL,(i^{op})^*\Psi(-+m)\big)$$
and on morphisms by  
$$D(\Psi)(f)(\underline{h}):=(i^{op})^*\Psi(id+f)\circ\underline{h}.$$
\end{Def}

The Hochschild and coHochschild complexes are natural in $\Phi$ and $\Psi$ and hence define self-maps of the functor categories:
$$C\colon\Fun(\e,\Comp)\to \Fun(\e,\Comp)\ \ \ \ \textrm{and}\ \ \ \ D\colon \Fun(\e^{op},\Comp)\to\Fun(\e^{op},\Comp).$$ 
We denote by $C^p=C\circ\dots\circ C$ and $D^p=D\circ\dots\circ D$ the iterated functors.  

\smallskip

We can describe $C(\Phi)$ and $D(\Psi)$ more explicitly as follows.  Since 
$\LL$ is quasi-free, 
$$\begin{array}{rcccl}C(\Phi)(m) &\cong& \bigoplus_{n\ge 1} \Phi(n+m)\ot L_n &\cong&  \bigoplus_{n\ge 1} \Phi(n+m)[n-1]\\
D(\Psi)(m)&\cong& \prod_{n\ge 1}\Hom\big(L_n,\Psi(n+m)\big)&\cong& \prod_{n\ge 1}\Psi(n+m)[1-n]
\end{array}$$
 as a graded modules, 
where the second isomorphism in each case comes from the fact that each $L_n$ is generated by a single element in degree $n-1$. 
The differential in the first case is given, for $x\in\Phi(n+m)$, by 
$$d(x\ot l_n)=d_\Phi x\ot l_n + (-1)^{|x|}\sum_{k=1}^{n-1} \Phi\big(i(f_{n,k})+id_m)\big)(x)\ot l_k$$
with $f_{n,k}$, $k<n$,  the terms of the differential of $L_n$ as in equation~(\ref{fnk}) above. 
In the second case, an element $\underline{h}$ of 
degree $d$ in $D(\Psi)(m)$ is determined by a sequence  
$(\u{h}(l_1),\u{h}(l_2),\dots) \in \prod_{n\ge 1}\Psi(n+m)_{n-1+d}$. In this notation,  
the differential is given by 
 $$d\u{h}(l_n)=(-1)^{n-1}\Big(d_\Psi(\u{h}(l_n))-\sum_{k=1}^{n-1}\Psi\big(i(f_{n,k})+id_m)\big)(\u{h}(l_{k}))\Big).$$

\begin{rem}{\rm 
For $\Phi:\Ai\to\Comp$ strong symmetric monoidal, with $\Phi(n)=A^{\ot n}$ for $A$ a dg-($\Ai$)-algebra, the value of $C(\Phi)$ at 0 is the Hochschild complex
of $A$: 
$$C(\Phi)(0)=\bigoplus_{n\ge 1}\Phi(n)=\bigoplus_{n\ge 1}A^{\ot n}$$
with differential the sum of the differential of $A$ and the Hochschild differential which multiplies, with higher and higher multiplications, all possible pairs, triples, quadruples,..., of consecutive factors thought of as cyclically arranged on a circle (see \cite[Rem 5.2]{WahWes08}). 

Similarly, when  $\Psi:\Ai^{op}\to \Comp$ is the functor associated to a coalgebra $C$ concentrated in degree 0, i.e. $\Psi(n)=C^{\ot n}$ with
$\Ai^{op}$-structure given by the comultiplication of $C$, the chain complex  
$D(\Psi)(0)$ recovers the classical coHochschild complex of $C$.
Indeed, we have 
$$D_*(\Psi)(0)\cong \prod_{n\ge 1}C^{\ot n}[-n+1].$$
If $C$ is concentrated in degree 0, then $D(\Psi)(0)$ 
is only non-zero in non-positive degrees, with elements of degree $-d$ of the form $(0,\dots,0,a_0\ot\dots\ot
a_d,0,\dots)$. The
differential $d=d_L$ takes such an element to the element of degree $-d-1$ whose only non-zero term is 
$$(-1)^{d}\sum_{i=0}^{d}(-1)^ia_0\ot\dots \ot(\textstyle{\sum} a_i'\ot a_i'')\ot \dots\ot a_{d} - \textstyle{\sum}a_0''\ot a_1\ot \dots\ot a_d\ot a_0'$$
where $\sum a_i'\ot a_i''$ is the comultiplication of $a_i$ (see for example \cite[Sec 3.1]{Doi81}).  
}\end{rem}

\begin{ex}{\rm \label{coex}
Suppose $A$ is an $\Ai$--algebra, and let $\Phi:\Ai\to \Comp$ with $\Phi(n)=A^{\ot n}$ be the functor defining this $\Ai$--structure. 
Consider the functor $\Hom_A:\Ai^{op}\to\Comp$ defined by   $$\Hom_A(n)=\Hom(A^{\ot n},A)$$
and, for $f\in\Ai(m,n)$, by $\Hom_A(f):\Hom(A^{\ot n},A)\to \Hom(A^{\ot m},A)$ induced by precomposition with $\Phi(f)$.
Then the coHochschild complex of $\Hom_A$ evaluated at 0 is
$$D(\Hom_A)(0)=\prod_{n\ge 1}\Hom(A^{\ot n},A)[1-n]$$
though with a differential which is {\em not} that of the Hochschild cochains---the coHochschild differential in $D(\Hom_A)$ cyclically precomposes with
multiplications. In Section~\ref{capsec}, we will though give an embedding of the Hochschild cochain complex $C^*(A,A)$ inside the complex
$D(C(\Hom_A)(0))(0)$.  
}\end{ex}

For symmetric monoidal functors, the iterated Hochschild complex
computes tensor powers of the Hochschild complex of the associated algebra (see \cite[Prop.~5.10]{WahWes08}). For the coHochschild
complex, as a graded vector space, 
 $$D^n(\Phi)(m)=\prod_{k_1,\dots,k_n\ge 1}\Hom(L_{k_1}\ot\cdots\ot L_{k_n},\Phi(k_1+\dots+k_n+m))[n-k_1-\dots -k_n].$$
The functors $C^n(-)(m)$ and $D^n(-)(m)$ can be constructed as  homology theories associated to a union of $n$ circles and $m$ points in the same way
classical Hochschild homology is associated to the circle (in Pirashvili's much more general language of higher Hochschild homology \cite{Pir00}).

\subsection{Homotopy invariance}\label{htpysec}

Proposition 5.6 in \cite{WahWes08} shows that the Hochschild complex functor is homotopy invariant. 
We show in this section the less straightforward fact that the coHochschild complex also has this property. 

\smallskip

Let $\Psi:\e^{op}\to\Comp$ be a dg-functor. Each chain complex $D(\Psi)(m)$ admits a natural filtration: using the identification  
$D(\Psi)(m)\cong \prod_{q\ge 1}\Psi(q+m)[1-q]$, we define a filtration
by $$F^s=  \prod_{q\ge s}\Psi(q+m)[1-q].$$
Each $F^s$ is indeed a subcomplex as 
the differential of $D(\Psi)$ is of the form $d=d_\Psi+d^L$, with $d_\Psi$ the differential of $\Psi$ not affecting $q$, and $d^L$ increasing $q$. We have 
$$D:=D(\Psi)(m)=F^1\supset F^2\supset \cdots\supset F^s\supset F^{s+1}\supset \cdots $$
is an exhaustive filtration ($D=\bigcup_sF^s$), which is moreover complete ($\lim_s D/F^s=\prod_s \Psi(s+m)=D$).
We have $F^s/F^{s+1}\cong \Psi(s+m)[1-s]$

Note that the coHochschild part of the differential $d^L=\sum_{n>q} f_{n,q}$ takes 
$F^{s}$ to $F^{s+1}$,  so that 
the spectral sequence associated to this filtration has the form 
$$E^1_{-p,q}=H_q(\Psi(p+1+m),d_\Psi) \ \ \ \Longrightarrow\ \ \ H_{q-p}(D(\Psi)(m)).$$




\begin{prop}\label{htpy} Let $(\e,i)$ be a prop with $\Ai$--multiplication and let $\Psi,\Psi'\in\Fun(\e^{op},\Comp)$.  
A quasi-isomorphism $\eta:\Psi\arsim\Psi'$ induces a quasi-isomorphism $D(\eta):D(\Psi)\arsim D(\Psi')$.  
\end{prop}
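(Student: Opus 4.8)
The plan is to exploit the exhaustive, complete filtration $F^s = \prod_{q \ge s} \Psi(q+m)[1-q]$ on $D(\Psi)(m)$ described just before the statement, together with its counterpart on $D(\Psi')(m)$. A quasi-isomorphism $\eta : \Psi \arsim \Psi'$ induces, for each object $m$, a map $D(\eta)(m) : D(\Psi)(m) \to D(\Psi')(m)$ that is filtration-preserving: since $D(\eta)(m)$ is given on the sequence description by postcomposition with $\eta_{q+m}$ in each factor $\Psi(q+m) \to \Psi'(q+m)$, it respects the product decomposition and hence sends $F^s$ into $F^s$. Therefore it induces a morphism of the associated spectral sequences.

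First I would identify the $E^1$-page: as recorded above, $F^s/F^{s+1} \cong \Psi(s+m)[1-s]$ (with differential $d_\Psi$), so the map on $E^1$ is $H_q(\Psi(p+1+m), d_\Psi) \to H_q(\Psi'(p+1+m), d_\Psi')$ induced by $\eta_{p+1+m}$. Because $\eta$ is a pointwise quasi-isomorphism, this is an isomorphism on every entry of the $E^1$-page. Hence the induced map is an isomorphism on $E^r$ for all $r \ge 1$, and in particular on $E^\infty$.

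The step I expect to be the main obstacle is passing from an $E^\infty$-isomorphism back to a quasi-isomorphism of total complexes: this requires a convergence argument, and for a decreasing filtration indexed by products (rather than sums) one must invoke a comparison theorem valid for complete, exhaustive, Hausdorff filtrations — e.g. the Eilenberg–Moore / Boardman-type convergence criterion. The filtration is complete ($\lim_s D/F^s = D$) and exhaustive ($D = \bigcup_s F^s$, trivially since $F^1 = D$), and it is Hausdorff because $\bigcap_s F^s = 0$ (an element of $F^s$ for all $s$ has all its components zero). So I would cite the standard fact that a filtered map of complexes that are complete and exhaustive with respect to Hausdorff filtrations, inducing an isomorphism on $E^1$, is a quasi-isomorphism (the conditional/strong convergence of both spectral sequences plus the five lemma on the finite stages, or directly the comparison theorem for such filtrations). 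This gives that $D(\eta)(m)$ is a quasi-isomorphism for every $m$, i.e. $D(\eta) : D(\Psi) \arsim D(\Psi')$ is a quasi-isomorphism of functors in the pointwise sense.

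A small remark I would add: one must check the filtration is natural enough — i.e. that $F^s$ is genuinely a subcomplex, which holds because $d = d_\Psi + d^L$ with $d_\Psi$ preserving $q$ and $d^L$ strictly increasing $q$ (so it even lands in $F^{s+1}$), exactly as noted in the excerpt. With that in hand the argument above is essentially a formal consequence of the filtration's properties, and no further computation with the explicit terms $f_{n,q}$ is needed.
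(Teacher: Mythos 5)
Your proposal is correct and follows essentially the same route as the paper: the same complete exhaustive filtration $F^s=\prod_{q\ge s}\Psi(q+m)[1-q]$, the same identification of the $E^1$-page via the pointwise quasi-isomorphism $\eta$, and the same appeal to the Eilenberg--Moore comparison theorem for complete exhaustive filtrations to conclude.
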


Recall that by a quasi-isomorphism of functors with values in chain complexes, we mean a natural transformation given by pointwise quasi-isomorphisms. 

\begin{proof}
A natural transformation $\Psi\to\Psi'$ induces a natural transformation $D(\Psi)\to D(\Psi')$ and we are left to show that this
natural transformation is by quasi-isomorphisms. For each $m$, we use the filtration of $D(\Psi)(m)$ and $D(\Psi')(m)$ defined above,
and the associated spectral sequence. 
A quasi-isomorphism of functors $\Psi\arsim\Psi'$ induces
an isomorphism of the $E^1$-terms of the spectral sequences. 
The result then follows from  the Eilenberg-Moore Comparison Theorem \cite[Thm 5.5.11]{WeiHomAlg}. 
\end{proof}

\subsection{Reduced complexes}\label{redsec}

Let $\Ai^+$ be the dg-category obtained from $\Ai$ by adding one generating morphism $u\in \Ai^+(0,1)$ with the relations that it acts as a unit for $m_2$, i.e.~we have $m_2\circ (u+id)=id=m_2\circ(id + u)\in \Ai^+(1,1)$, and such that $m_k(id_i+u+id_{k-i-1})=0$ for all $k>2$ and $0\le i\le k-1$. 
Then  symmetric monoidal dg-functors $\Phi:\Ai^+\to\Comp$ corresponds
exactly to $\Ai$--algebras $A=\Phi(1)$ equipped with a strict unit for the multiplication $m_2:A\ot A\to A$.
   
If the functor $i:\Ai\to\e$ extends to a functor $i^+:\Ai^+\to \e$ from the prop of unital $\Ai$--algebras, 
we define the 
{\em reduced Hochschild complex} of a functor $\Phi:\e\to\Comp$  to be the quotient of $C(\Phi)$ given on objects by  
$$\bC(\Phi)(m)=\sum_{n\ge 1}\Phi(n+m)/_{U_n}\ot L_n$$
for $U_n=\sum_{2\le i\le n}Im(u_i)$, with $u_i:\Phi(n-1+m)\to\Phi(n+m)$ introducing a unit at the $i$th position. Lemma 5.4 of
\cite{WahWes08} shows that the differential of $C(\Phi)$ induces a well-defined differential on $\bC(\Phi)$.

Similarly, for $\Psi:\e^{op}\to\Comp$ and $m\in \N$, 
the {\em reduced coHochschild complex} $\bD(\Psi)$ is the subcomplex of $D(\Psi)$ defined by  
$$\bD(\Psi)(m)=\prod_{n\ge 1}\Hom(L_n,K(n)\subset\Psi(n+m))$$
where $K(n)=\bigcap_{i\ge 2}^n\ker(u_i^{op})$.  
(The kernels $K(n)$ do not define a functor from $\Ai$, so that we cannot define the reduced complex directly as a
complex of natural transformations.) 
Proposition~\ref{diffprop} below checks that the differential of $D(\Psi)(m)$ restrict to a differential on $\bD(\Psi)(m)$.  

Note that $\bD(\Psi)$ is a subfunctor of $D(\Psi):\e^{op}\to \Comp$. This follows from the fact that, for $f\in \e^{op}(m,m')$ and any $i\le n$, we have a commutative diagram 
$$\xymatrix{\Psi(n+m)\ar[r]^-{\Psi(u_i^{op})}\ar[d]_{\Psi(id_n+f)}& \Psi(n-1+m)\ar[d]^{\Psi(id_{n-1}+f)}\\
\Psi(n+m')\ar[r]^-{\Psi(u_i^{op})} & \Psi(n-1+m')  
}$$ 
as $u_i^{op}$ could just as well be written as $u_i^{op}+id_m$ (resp.~$u_i^{op}+id_{m'}$). 
In fact, $\bD$ defines again a self-map of $\Fun(\e,\Comp)$. 

\medskip

To prove that $\bD(\Psi)$ is quasi-isomorphic to $D(\Psi)$, we will consider partially reduced Hochschild complexes as well: let 
$$\bD_{\le r}(\Psi)(m)=\prod_{n\ge 1}\Hom(L_n,K(n)_{\le r}\subset\Psi(n+m))$$
where $K(n)_{\le r}=\bigcap_{i=2}^{\max(r,n)}\ker(u_i)\subset\Psi(n+m)$. 
In particular,  $\bD_{\le 1}(\Psi)(m)=D(\Psi)(m)$ and $\bD(\Psi)(m)=\bigcap_{r\ge 1}\bD_{\le r}(\Psi)(m)$. 

\begin{prop}\label{diffprop}
The differential of $D(\Psi)(m)$ restricts to a differential on $\bD(\Psi)(m)$ and on each $\bD_{\le r}(\Psi)(m)$. 
\end{prop}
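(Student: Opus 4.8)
The plan is to check that both the full coHochschild differential and each partial truncation respect the unit-kernels $K(n)$ and $K(n)_{\le r}$. Recall that the differential on $D(\Psi)(m)$ is $d=d_\Psi+d^L$, where $d^L$ is assembled out of the terms $\Psi(i(f_{n,k})+id_m)$ with $f_{n,k}=\sum_i m^i_{n-k+1}\in\Ai(n,k)$. Since $\eta$ is a natural transformation and $u_j^{op}$ commutes with $d_\Psi$, the $d_\Psi$-part clearly preserves the kernels, so the whole content is in analysing how the operators $u_j^{op}$ (inserting a unit in the $j$th slot, $j\ge 2$) interact with precomposition by the morphisms $m^i_{n-k+1}$. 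Concretely, given $\u h\in\bD_{\le r}(\Psi)(m)$, I must show $u_j^{op}\big((d^L\u h)(l_k)\big)=0$ for all $j$ in the relevant range, using that $u_{j'}^{op}(\u h(l_n))=0$ for the $j'$ in the range coming from $K(n)_{\le r}$ (and similarly for $K(n)$).

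The key computation is a relation in $\Ai^+$ (pushed to $\e$ via $i^+$) between $u_j\circ m^i_{n-k+1}$ and terms of the form $m^{i'}_{n-k}\circ u_{j'}$ or $id\circ u_{j'}$. There are exactly the cases dictated by the defining relations of $\Ai^+$: if the inserted unit lands among the $n-k+1$ inputs being multiplied by $m_{n-k+1}$ and $n-k+1>2$, the composite is zero; if $n-k+1=2$ and the unit is one of the two inputs, the multiplication becomes the identity and the unit effectively cancels, producing a lower multiplication term with a unit inserted elsewhere; and if the inserted unit is disjoint from the inputs of $m^i_{n-k+1}$, it commutes past to give $m^{i'}_{n-k+1}\circ u_{j'}$ with $j'\ge 2$ still. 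In every case the resulting operator, when applied to $\u h(l_n)$, is either manifestly zero by the $K(n)$-condition or is $\pm$ (a unit-insertion operator in the allowed range) applied to $\u h$, hence zero. The cyclic indexing ``modulo $n$'' in the definition of $m^i_{n-k+1}$ means one has to be mildly careful about the slot containing the ``output'' leg (the $m$ extra inputs are untouched, which is why $j\ge 2$ rather than $j\ge 1$ is exactly the right condition); I would organise this as a short case analysis keyed to the position of slot $j$ relative to the cyclic block $\{i,i+1,\dots,i+n-k\}$.

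For the truncated complexes $\bD_{\le r}(\Psi)(m)$ the same argument applies verbatim: one checks that $d^L$ sends $K(n)_{\le r}$-valued sequences to $K(k)_{\le r}$-valued ones, which follows because all the unit-insertion operators produced in the case analysis above lie in slots $j'\in\{2,\dots,\max(r,k)\}$ whenever the input slots lay in $\{2,\dots,\max(r,n)\}$; since $k<n$ this range only shrinks, so no new obstruction appears. Taking the intersection over all $r$ then recovers the statement for $\bD(\Psi)(m)=\bigcap_{r\ge 1}\bD_{\le r}(\Psi)(m)$, but in fact the unrestricted case can also be argued directly in the same way. That $d^2=0$ on the subcomplex is automatic once we know $d$ restricts, since it already holds on $D(\Psi)(m)$.

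The main obstacle I expect is purely bookkeeping: correctly matching the cyclic-index conventions for $m^i_{n-k+1}$ with the slot-index conventions for $u_j^{op}$ (recalling morphisms act on the right here, so composites read in the opposite order from the usual convention), and tracking the signs/orientations through the three cases. None of this is deep, but it is exactly the kind of place where an off-by-one in the cyclic wrap-around could falsely suggest the differential fails to restrict, so I would be careful to verify the boundary case where the multiplied block wraps around past slot $n$ and the adjacent untouched slots are $1$ and $2$.
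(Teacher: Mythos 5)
Your overall route is the paper's: dispose of $d_\Psi$ by naturality of the degree-zero maps $u_i$, and reduce the $d^L$-part to a relation in $\Ai^+$ between a unit insertion and the multiplication terms $m^i_{n,k}$ of $f_{n,k}$, analysed by the position of the inserted unit relative to the multiplied cyclic block (the paper delegates exactly this analysis to \cite[Lem 5.4]{WahWes08}; the explicit version appears in the proof of Proposition~\ref{conormalization}). But there is a genuine gap in your middle case. When $n-k+1=2$ and the inserted unit lies in the multiplied block, the composite of the unit insertion with $m^i_{n,k}$ is \emph{not} ``a lower multiplication term with a unit inserted elsewhere'': by the relations $m_2\circ(u+id)=id=m_2\circ(id+u)$ it is $\pm$ the identity on $n-1$ strands. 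Applied to $x=\u{h}(l_k)\in K(k)$ this gives $\pm x$, which is neither zero nor of the form (allowed unit insertion) applied to something, so your claim that ``in every case the resulting operator \ldots is either manifestly zero \ldots or is $\pm$ a unit-insertion operator in the allowed range'' fails precisely here: termwise, the coHochschild differential does \emph{not} preserve the kernels.

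What rescues the statement is a cancellation you never invoke: for a fixed insertion slot $j$ there are exactly two block positions, $i=j-1$ and $i=j$ (taken cyclically), whose composites with $u_j$ are $\pm\mathrm{id}$, and these two terms carry opposite signs in the sum $f_{n,k}\circ u_j=\sum_i m^i_{n,k}\circ u_j$, hence cancel. Only the \emph{total} composite $f_{n,k}\circ u_j$, not each summand, is a combination of morphisms factoring through unit insertions $u_{i'}$ with $2\le i'\le \min(j,k)$, which is what kills $x\in K(k)$ (and $x\in K(k)_{\le r}$ in the truncated case, since $i'\le j\le r$). This pairwise cancellation is the one idea missing from your write-up; it is exactly the content of the cited lemma of Wahl--Westerland and of the corresponding step in the proof of Proposition~\ref{conormalization}. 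Your other two cases are fine (unit strictly inside a block of length $>2$ gives $0$ by the $\Ai^+$ relations; unit outside the block commutes past to a unit insertion in a slot $\ge 2$ composed with $m^{i'}_{n-1,k-1}$), modulo the harmless reversal of composition order in your notation and the swapped roles of $l_n$ and $l_k$ in your statement of what must be checked.
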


\begin{proof}
We prove the proposition for $\bD(\Psi)(m)$. The same proof applies to each $\bD_{\le r}(\Psi)(m)$. 
We write $d_\Psi+d^L$ for the differential of the coHochschild complex, with $d^L$ the coHochschild differential.
First note that $d_\Psi$ takes $\bD(\Phi)(m)$ to itself as $u_i$ is of degree 0 so
$d_{\Psi}(\Psi(u_i^{op})(x))=\Psi(u_i^{op})(d_{\Psi}x)$. So we are left to consider $d^L$.  

Recall that $d^L=\sum_{n>k}f_{n,k}^{op}$, where each $f_{n.k}=\sum m^i_{n,k}\in \Ai(n,k)$. 
For $x\in K(k)\subset\Psi(k+m)$, 
we will show that  $\Psi(f^{op}_{n,k})(x)=\Psi(f^{op}_{n,k}+id_m)(x)\in K(n)\subset
\Psi(n+m)$, i.e.~that each of the sum of terms defined by $f_{n,k}$ take $K(n)$ to $K(n)$. 
So assume  $\Psi(u^{op}_i)(x)=0$ for each $2\le i\le k$. We need to check that $\Psi(u^{op}_i)(\Psi(f^{op}_{n,k})(x))=0$ 
for each $2\le i\le n$. 
For this, we need to consider the composition $f_{n,k}\circ u_i\in\Ai^+(n-1,k)$.  
Recall from \cite[Lem 5.4]{WahWes08} (which deals with the dual situation) that $f_{n,k}\circ u_i$ has the form $\sum_j \,\pm (u_{i'}+ m_{n-k+1}^j)$ with $2\le i'\le i$ and $i'\le k$. 
Hence $(f_{n,k}\circ u_i)^{op}=\sum (f')^{op}\circ u_{i'}^{op}$ with $2\le i'\le i$ maps $x$ to 0 if $x\in K(k)$.
\end{proof}

\begin{prop}\label{conormalization}
The inclusion $\bD(\Psi)(m) \inc D(\Psi)(m)$ is a quasi-isomorphism. 
\end{prop}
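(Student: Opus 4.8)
The statement to prove is Proposition~\ref{conormalization}: that the inclusion $\bD(\Psi)(m) \inc D(\Psi)(m)$ is a quasi-isomorphism. The plan is to filter by the degree of reduction using the partially reduced complexes $\bD_{\le r}(\Psi)(m)$ introduced just before the proposition, and to prove by induction on $r$ that each inclusion $\bD_{\le r+1}(\Psi)(m) \inc \bD_{\le r}(\Psi)(m)$ is a quasi-isomorphism; then pass to the limit. Concretely, $\bD_{\le 1}(\Psi)(m) = D(\Psi)(m)$ and $\bD(\Psi)(m) = \bigcap_{r\ge 1}\bD_{\le r}(\Psi)(m)$, so once each successive inclusion is a quasi-isomorphism we get that $\bD(\Psi)(m) \inc D(\Psi)(m)$ is one, provided the limit argument is justified.

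The main step is the inductive one. Fix $r$ and compare $\bD_{\le r}(\Psi)(m)$ with $\bD_{\le r+1}(\Psi)(m)$: the difference is that at spot $n$ we additionally impose vanishing under $u^{op}_{r+1}$ when $n \ge r+1$. So I would look at the quotient complex $Q_{\le r}(\Psi)(m)$ of $\bD_{\le r}(\Psi)(m)$ by $\bD_{\le r+1}(\Psi)(m)$, which at spot $n \ge r+1$ has graded pieces built from $\Img(u^{op}_{r+1})\cap K(n)_{\le r}$, i.e.\ from $\Psi$ evaluated at $n-1$ entries (with the $(r+1)$st slot "filled by a unit"), and show this quotient is acyclic. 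The strategy for acyclicity is a standard "extra degeneracy / contracting homotopy" argument: the operator $u^{op}_{r+1}$ itself (inserting a unit at position $r+1$) should provide, after suitable sign bookkeeping, a chain homotopy witnessing that inserting a unit is chain-homotopic to the identity on the relevant subquotient. This is exactly the dual of the classical fact that the normalized (co)Hochschild complex of a unital algebra is quasi-isomorphic to the unnormalized one; the content here is that the coHochschild differential $d^L = \sum_{n>k} f^{op}_{n,k}$ interacts correctly with $u^{op}_{r+1}$, and this interaction is precisely what Proposition~\ref{diffprop} (via the identity $f_{n,k}\circ u_i = \sum_j \pm(u_{i'} + m^j_{n-k+1})$ from \cite[Lem.~5.4]{WahWes08}) controls. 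So the real work is to set up the right filtration-or-extra-degeneracy on the quotient complex and verify the homotopy identity with signs.

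The limit step needs a word of care because $\bD(\Psi)(m) = \bigcap_r \bD_{\le r}(\Psi)(m)$ is an inverse limit, not a direct limit, so quasi-isomorphism is not automatically preserved. However, at spot $n$ the reduction stabilizes: $K(n)_{\le r} = K(n)$ for all $r \ge n$, so the tower $\{\bD_{\le r}(\Psi)(m)\}_r$ is eventually constant in each fixed product-factor $\Hom(L_n, -)$. Thus on the nose $\bD(\Psi)(m) \to \bD_{\le r}(\Psi)(m)$ is an isomorphism in product-degrees $\le r$, and more to the point one can use the filtration $F^s = \prod_{q\ge s}\Psi(q+m)[1-q]$ from Section~\ref{htpysec}: each inclusion $\bD_{\le r+1} \inc \bD_{\le r}$ and $\bD \inc \bD_{\le r}$ is compatible with this complete, exhaustive filtration, the induced maps on associated gradeds stabilize (and are isomorphisms for $r$ large relative to $s$), and then the Eilenberg--Moore Comparison Theorem \cite[Thm.~5.5.11]{WeiHomAlg} — the same tool used in the proof of Proposition~\ref{htpy} — upgrades this to a quasi-isomorphism on the total complexes.

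I expect the main obstacle to be the sign and bookkeeping in the contracting-homotopy argument for the quotient complex: one must carefully identify the subquotient $\Img(u^{op}_{r+1}) \cap K(n)_{\le r}$ with a shifted copy of $\bD$-type data in fewer variables and check that $u^{op}_{r+1}$ (appropriately signed) squares to zero against the coHochschild differential in the required way, using the structure of $f_{n,k}\circ u_i$. The filtration/limit argument is then routine given the machinery already in the paper.
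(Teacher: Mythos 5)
Your main step is the paper's: the proof there filters $D(\Psi)(m)$ by exactly the partially reduced complexes $F^r=\bD_{\le r}(\Psi)(m)$, identifies the subquotient $F^{r-1}/F^r$ with $B^r=\prod_{n\ge r}K(n-1)_{\le r-1}[1-n]$ via the short exact sequence $F^r\to F^{r-1}\xrightarrow{u} B^r$ with $u=\Psi(u_r^{op})$ (split by the multiplication $m_{r-1,r}$), and then kills $B^r$ by the explicit contracting homotopy $s(y)_n=(-1)^{n+r}u_r^{op}(y_{n+1})$ --- precisely your ``unit insertion as extra degeneracy'', with the formula for $f_{n,k}\circ u_i$ from \cite[Lem 5.4]{WahWes08} doing the work. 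Two remarks on your formulation of this step: the description of the subquotient as $\Img(u^{op}_{r+1})\cap K(n)_{\le r}$ does not typecheck (the image of $u_{r+1}^{op}$ lies in $\Psi(n-1+m)$ while $K(n)_{\le r}\subset\Psi(n+m)$); the correct statement is that $u_{r+1}^{op}$ maps $K(n)_{\le r}$ onto $K(n-1)_{\le r}$ with kernel $K(n)_{\le r+1}$. Also, the sign bookkeeping you defer (computing the quotient differential on $B^r$ and verifying $sd+ds=id$) is in fact the bulk of the paper's proof, so your plan leaves the hardest part unexecuted, though correctly identified.

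Where you genuinely diverge is the passage from the finite stages to $\bD=\bigcap_r F^r$, and there your proposed mechanism fails as stated. The Eilenberg--Moore comparison theorem needs the map to induce an isomorphism on some page of the spectral sequences of the complete exhaustive filtrations; but for the column filtration $F^s=\prod_{q\ge s}\Psi(q+m)[1-q]$ the inclusion $\bD\inc D$ induces $K(s)\inc\Psi(s+m)$ on associated graded, which is not a quasi-isomorphism, and for a fixed comparison $\bD\inc\bD_{\le r}$ the graded map is an isomorphism only in columns $s\le r$, never on a whole page. The filtration that does work is the one by reduction degree rather than by column: the quotient $D/\bD$ carries the complete, exhaustive decreasing filtration by the $F^r/\bD$, whose graded pieces are the acyclic $B^r$'s, so its spectral sequence vanishes at $E^1$ and completeness gives $H_*(D/\bD)=0$ (equivalently, run a Milnor $\lim^1$ argument on the tower of surjections $D/F^{r+1}\twoheadrightarrow D/F^r$, which stabilizes in each product spot, each $D/F^r$ being acyclic as a finite iterated extension of the $B^i$'s). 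The paper itself is terse at this point --- it writes the short exact sequence $\bD\to D\to\cup_r B^r$ and notes that any cycle lies in some $B^r$ --- so your instinct that the inverse-limit direction requires care is sound; you just aimed it at the wrong filtration.
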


\begin{proof} (This is an adaptation of the proof of \cite[Lem 16]{Jar11}. 
I would like to apologize in advance about the signs in this proof: 
they come from the behavior of orientations in the graph complex and they are not very transparent.)
The partially reduced complexes $F^r:=\bD_{\le r}(\Psi)(m)$ define a filtration of $D(\Psi)(m)$:   
$$\bD(\Psi)=\bigcap_{r\ge 1}F^r \subset \dots\subset F^r\subset F^{r-1}\subset\cdots \subset F^1=D(\Psi).$$
Using the identification $D(\Psi)(m)\cong\prod_{n\ge 1}\Psi(n+m)[1-n]$, we have   
$$F^r\cong\prod_{n\ge 1}K(n)_{\le r}[1-n].$$ 
Note that for each $r$, the first $r-1$ factors of $F^r$ and $F^{r-1}$ are identical.

To prove the proposition, we will show that the quotients $F^{r-1}/F^r$ have trivial homology. To identify these quotients,   
we use the short exact sequence 
$$F^{r}=\prod_{n\ge 1}K(n)_{\le r}[1-n]\ \rar\ F^{r-1}=\prod_{n\ge 1}K(n)_{\le r-1}[1-n]\ \sta{u}{\rar}\ B^r=\prod_{n\ge r} K(n-1)_{\le r-1}[1-n]$$
with $u$ the 0-map on the factors with $n<r$, and $u=(i^{op})^*\Psi(u_r^{op})$ on the other factors---to ease the notation, we will from now on in the proof simply write $u_i^{op}$ for $(i^{op})^*\Psi(u_r^{op})$ and more generally drop 
$(i^{op})^*\Psi$ from the notation on morphisms applied to elements of $K(n)$. 

Note that $u$ is surjective (the multiplication 
$m_{r-1,r}\in\Ai(n,n-1)$
giving a splitting) and that $F^r$ is indeed the kernel of $u$.  Hence $B^r\cong F^{r-1}/F^r$ as a graded vector space. 

We compute the quotient differential on $B^r$. 
 Recall that for $x=(x_1,x_2,\dots)\in F^{r-1}$, its differential has component $n$ given by 
$(dx)_n=(-1)^{n-1}(d_{\Psi}(x_n)-\sum_{k<n}f_{n,k}^{op}(x_k))$, where $f_{n,k}=\sum_{i=1}^{n}m^i_{n,k}$ where $m_{n,k}^i=\pm m^i_{n-k+1}$ multiplies the entries $i,\dots,i+n-k$. 
For $y=(y_r,y_{r+1},\dots)\in B^r$, with each $y_n\in K(n-1)_{\le r-1}$, 
define $dy$ by 
$$(dy)_n=(-1)^{n-1}(d_\Psi(y_n)+ \sum_{k=r}^{n-1}\sum_{i=r}^k (-1)^{n-k}(m^i_{n-1,k-1})^{op}(y_{k}))$$
To check that this is the quotient differential, we need to check that the diagram 
$$\xymatrix{F^{r-1}\ar[r]^u\ar[d]^d & B^r\ar[d]^d\\
F^{r-1}\ar[r]^u & B^r}$$
commutes. Let  $x=(x_1,x_2,\dots)\in F^{r-1}$.
Then for $n\ge r$, 
\begin{align*}(u\circ d(x))_{n}&=(-1)^{n-1}(u^{op}_r(d_{\Psi}(x_n))-\sum_{k<n}u^{op}_r(f^{op}_{n,k}(x_k)))\\
&=(-1)^{n-1}(u_r^{op}(d_{\Psi}(x_n))-\sum_{k<n}(f_{n,k}\circ u_r)^{op}(x_k))).
\end{align*}
We have $f_{n,k}\circ u_r=\sum_{i=1}^{n} m^i_{n,k}\circ u_r$. There are 3 possibilities: $r$ can be before the multiplication, at the multiplication,
or after it. If $n-k>1$, and $r$ is at the multiplication, i.e. $i\le r\le i+n-k$ (understood cyclically), then the composition is 0. If $n-k=1$, there are exactly
two multiplications which include $r$, namely $i=r-1$ and $i=r$ (again understood cyclically, i.e. mod $n$), and the two terms will cancel each other. Hence there
will be no contribution of that form. If $r$ is after the multiplication, $m^i_{n,k}\circ u_r=u_j\circ m^i_{n-1,k-1}$ for some $j< r$, and hence it acts as 0 on
$F^{r-1}$. More precisely, this happens when $r>i+n-k$ cyclically, except when $i+n-k=1$.  
Hence the only the terms contributing to this sum are those with $i>r$ and $i+n-k<n+2$ (now non-cyclically), using the fact that $r>1$ for the case $i+n-k=n+1$. 
This means that only the terms $m^i_{n,k}$ with $r<i<k+2$ contribute, which in particular requires $k\ge r$. 
Now in these cases we have $m_{n,k}^i\circ u_r=(-1)^{n-k}u_r\circ m^{i-1}_{n-1,k-1}$. 
So  $$(u\circ d(x))_{n}=(-1)^{n-1}(d_\Psi(u_r^{op}(x_n))+\sum_{k=r}^{n-1}\sum_{i-1=r}^{k}(-1)^{n-k}(m^{i-1}_{n-1,k-1})^{op}(u_r^{op}(x_k)$$
which is exactly the $n$th component of $d\circ u(x)$. 

We now show that $B^r$ has trivial homology. 
Define $s:B^r\to B^r$ of degree $+1$ by $s(y)_n=(-1)^{n+r}u_{r}^{op}(y_{n+1})$. Explicitly, we have
\begin{align*}
(sd(y))_n= & \ (-1)^{n+r}u_r^{op}(dy)_{n+1} \\
= & \ (-1)^{r}(u_{r}^{op}(d_\Psi(y_{n+1}))+\sum_{k=r}^{n}\sum_{i=r}^k(-1)^{n-k+1}u_{r}^{op}((m^{i}_{n,k-1})^{op}(y_{k})))\\
(ds(y))_n=& \ (-1)^{n-1}((-1)^{n+r}d_\Psi(u_{r}^{op}(y_{n+1}))\\
& +\sum_{k'=r}^{n-1}\sum_{i'=r}^{k'}(-1)^{n-k'+r+k'} (m^{i'}_{n-1,k'-1})^{op}(u_{r}^{op}(y_{k'+1})))\\
\end{align*}
First note that for $k=n$ and $i=r$ in the first line, we have $u_r^{op}((m^r_{n,n-1})^{op}(y_n))= (m^r_{n,n-1}\circ u_r)^{op}y_n =(-1)^{r+1}
y_n$. We would like to
show that no other term contribute to the sum $(sd(y))_n+(ds(y))_n$. 

If $i=r$ in the first sum with $k<n$,  the composition $m^r_{n,k-1}\circ u_r=0$, so such terms do not contribute. 
In particular, as $k=r$ implies $i=r$, there are no contribution of that form.  

In the first sum, we are thus left with the terms $r<k\le n$ and $r< i\le k$. Then the composition 
$m^i_{n,k-1}\circ u_r=(-1)^{n-k+1}u_r\circ m^{i-1}_{n-1,k-2}$. 
Choosing $k'=k-1$ and $i'=i-1$ gives an identification between those terms and
minus all the terms of the second sum. 

Hence $sd+ds=id$.
It follows that $B^r$ has trivial homology and thus that each inclusion $F^r\inc F^{r-1}$ is a quasi-isomorphism. 
To obtain the desired quasi-isomorphism, we consider the short exact sequence $$\bD(\Psi)=\cap_r F^r\to D(\Psi)=F^1\to \cup_r B^r.$$
As $H_*(B^r)=0$ for each $r$, we have that $H_*(\cup_r B^r)=0$ (any cycle being in some $B^r$), which implies the result. 
\end{proof}

\begin{prop}\label{normalization}
The quotient map $C(\Phi)\to\bC(\Phi)$ is a quasi-isomorphism of functors. 
\end{prop}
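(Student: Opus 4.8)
The plan is to dualize the argument just given for Proposition~\ref{conormalization}. For the coHochschild complex the inclusion $\bD(\Psi)(m)\inc D(\Psi)(m)$ was handled by exhibiting an explicit chain contraction of each quotient $F^{r-1}/F^r\cong B^r$; for the Hochschild complex the dual statement involves the quotient $C(\Phi)(m)\surj\bC(\Phi)(m)$, so I would instead filter by the \emph{kernel} of this quotient map. Concretely, set $G_r(\Phi)(m)=\sum_{n\ge 1}\Phi(n+m)/_{U_n^{\le r}}\ot L_n$ where $U_n^{\le r}=\sum_{2\le i\le \max(r,n)}\Img(u_i)$, so that $G_1(\Phi)(m)=C(\Phi)(m)$ and $\bigcup_r G_r$, or rather the colimit over $r$, is $\bC(\Phi)(m)$ (each $G_r\surj G_{r+1}$ being a further quotient). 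As in Proposition~\ref{diffprop}, Lemma~5.4 of~\cite{WahWes08} guarantees that the Hochschild differential descends to each $G_r(\Phi)(m)$, since composing a unit insertion $u_i$ with the multiplication terms $f_{n,k}$ either kills the term (unit at the multiplication), cancels in pairs, or produces another unit insertion at an earlier position $i'\le i$.

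The heart of the argument is then to show each quotient map $G_{r-1}(\Phi)(m)\surj G_r(\Phi)(m)$ is a quasi-isomorphism, equivalently that its kernel is acyclic. This kernel is, as a graded module, $\prod$— no, rather $\bigoplus_{n\ge r}\Img(u_r)/(\text{lower units})\ot L_n$, and I would identify it, analogously to the complex $B^r$ in the previous proof, with a shifted copy of the already-reduced-below-$r$ complex via the splitting provided by the multiplication $m_{r-1,r}\in\Ai(n,n-1)$ (which is a one-sided inverse to $u_r$, exactly as ``$m_{r-1,r}$ giving a splitting'' was used for $u$ in Proposition~\ref{conormalization}). On this kernel complex I would then write down the explicit degree $+1$ operator $s$ built from a single unit insertion $u_r$ — the literal dual of the map $s(y)_n=(-1)^{n+r}u_r^{op}(y_{n+1})$ used above — and check that $sd+ds=\pm\,\mathrm{id}$ by the same three-case analysis of $m^i_{n,k}\circ u_r$ (unit before, at, or after the multiplication). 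Finally, since each kernel is acyclic and $\bC(\Phi)(m)$ is the colimit of the $G_r(\Phi)(m)$ along surjections with acyclic kernels, a short-exact-sequence argument (as in the last lines of Proposition~\ref{conormalization}, or directly: homology commutes with filtered colimits) gives that $C(\Phi)(m)\to\bC(\Phi)(m)$ is a quasi-isomorphism for every $m$, hence a quasi-isomorphism of functors. Naturality in $\e$ is automatic since everything is built from the unit insertions $u_i=i^+(u)$, which commute with the structure maps $\Phi(\mathrm{id}+f)$.

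The main obstacle, as the authors themselves flag in the proof of Proposition~\ref{conormalization}, is keeping track of signs: the orientations in the black-and-white graph complex make the Koszul signs in $sd+ds$ genuinely delicate, and one has to be careful that the pairing of terms $m^i_{n,k}\circ u_r=(-1)^{n-k}u_r\circ m^{i-1}_{n-1,k-1}$ (or its analogue here) produces exactly the cancellation claimed, with no stray boundary terms beyond the single $k=n$, $i=r$ term that yields the identity. A secondary subtlety is the bookkeeping of which complex, $G_r$ or its kernel, one filters by and in which direction the filtration runs — one wants a complete, exhaustive filtration so that the Eilenberg–Moore comparison theorem~\cite[Thm~5.5.11]{WeiHomAlg} or a direct colimit argument applies; but this is the same formal setup already deployed for the coHochschild case and should transcribe with only notational changes.
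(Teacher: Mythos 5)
Your proposal follows essentially the same route as the paper's own proof: the paper likewise forms the partial quotients $F_r=\oplus_{n\ge 1}\Phi(n+m)/U_{\le r}$ (your $G_r$), identifies the kernel of each quotient map $F_{r-1}\to F_r$ with $A_r\cong\oplus_{n\ge r}\Phi(n-1+m)[n-1]$ via $u_r$, contracts it by the homotopy $s(x)=(-1)^{|x|}u_r(x)$ with $sd+ds=\mathrm{id}$, and concludes by the acyclicity of the union $A_\infty=\cup_r A_r$ exactly as you suggest. The sign and case-analysis work you flag is indeed the only content the paper leaves to the reader ("one checks"), so your plan matches it step for step.
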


\begin{proof}
The proof is a dual version of the previous proof: we consider the partial quotients of $C(\Phi)(m)\cong\oplus_{n\ge 1}\Phi(n+m)$
$$F_r=\oplus_{n\ge 1}\Phi(n+m)/U_{\le r} \ \ \ \textrm{where}\ \ \  U_{\le r}=\sum_{i=2}^{max(n,r)}Im(u_i)$$
Then we have a sequence of quotient maps
$$C(\Phi)(m)=F_1\to F_2\to\dots\to F_\infty=\bC(\Phi)(m).$$
We compute the quotients $F_r/F_{r-1}$ and $F_\infty/F_1=\bC(\Phi)(m)/C(\Phi)(m)$ via short exact sequences 
$$A_r\sta{u_r}{\rar} F_{r-1}\to F_{r} \ \ \ \textrm{and}\ \ \ A_\infty=\cup_r A_r \to F_1\to F_\infty.$$
Again, we want to show that the $A_r$'s are acyclic. 
One checks that, as a graded vector space, $A_r=\oplus_{n\ge r}\Phi(n-1+m)[n-1]$ and that the quotient 
differential is defined on $x\in \Phi(n-1+m)$ by 
$$dx=d_\Phi(x)+(-1)^{|x|} \sum_{k=r}^{n-1}\sum_{i=r}^k (-1)^{n-k}m^i_{n-1,k-1}(x).$$
Then define $s:A_r\to A_r$ by $s(x)=(-1)^{|x|}u_r(x)$ if $x\in\Phi(n-1+m)$. 
This satisfies $sd+ds=id$, which shows that each $A_r$ is acyclic, and hence that their union is acyclic. 
\end{proof}

\section{Formal and natural operations on the Hochschild complex}\label{natsec}

In this section, we first give a model for the formal operations $\Nat_\e$, the natural operations on the Hochschild complex of {\em generalized $\e$--algebras}, i.e.~functors
$\e\to\Comp$ without any monoidality assumptions. In Section~\ref{monoidalsec} we then compare this chain complex to the chain complex of
natural operations $\Nat^\ot_\e$ on the Hochschild complex of actual algebras. Algebras here are algebras over an arbitrary  prop with $\Ai$--multiplication
$(\e,i)$. 
We say that $\e$ has units if $i:\Ai\to\Comp$ extends to a functor $i^+:\Ai^+\to\Comp$.

\subsection{Formal operations} 
   
For each $m,n\ge 0$, we have the dg-functor 
$$C_{\e}^{(n,m)}: \Fun(\e,\Comp)\rar \Comp$$
taking a functor $\Phi$  to the chain complex $C^n(\Phi)(m)$ of Definition~\ref{HcHDef}, the iterated Hochschild complex of $\Phi$ evaluated at $m$. 
Consider the category $\Nat_\e$ with objects pairs of natural numbers $\oc{m}{n}$ with $n,m\ge 0$, 
and morphisms the Hom-complexes of the above functors: 
$$\Nat_\e(\oc{m_1}{n_1},\oc{m_2}{n_2}):= \Hom(C_{\e}^{(n_1,m_1)},C_{\e}^{(n_2,m_2)}).$$ 
Similarly, when $\e$ has units, we define $\overline{\Nat}_\e$ as the category with morphisms the Hom-complexes of the iterated reduced Hochschild complexes
of Section~\ref{redsec}: 
$$\overline{\Nat}_\e(\oc{m_1}{n_1},\oc{m_2}{n_2}):= \Hom(\bC_{\e}^{(n_1,m_1)},\bC_{\e}^{(n_2,m_2)}).$$ 

\smallskip

The representable functors $\e(p,-):\e\to\Comp$ have a well-defined Hochschild, and $n_1$--iterated Hochschild complex $C^{n_1}(\e(p,-))$. By \cite[Prop
5.5]{WahWes08}, these functors for varying $p$'s assemble to a define functor $$C^{n_1}\e(-,-):\e^{op}\times \e\rar \Comp$$
so that it makes sense to subsequently take an $n_2$--iterated coHochschild complex construction in the first variable to obtain a new functor
$$D^{n_2}C^{n_1}\e(-,-) :\e^{op}\times \e\rar \Comp.$$

\begin{thm}\label{natural} Let $(\e,i)$ be a prop with $\Ai$--multiplication. Then 
there are isomorphisms of chain complexes 
$$\Nat_\e(\oc{m_1}{n_1},\oc{m_2}{n_2})\cong D^{n_1}C^{n_2}\e(-,-)(m_1,m_2)$$ 
and, when $\e$ has units,  
$$\overline{\Nat}_\e(\oc{m_1}{n_1},\oc{m_2}{n_2})\cong \bD^{n_1}\bC^{n_2}\e(-,-)(m_1,m_2).$$ 
Moreover in that case we have $\bNat_\e(\oc{m_1}{n_1},\oc{m_2}{n_2})\simeq \Nat_\e(\oc{m_1}{n_1},\oc{m_2}{n_2})$.  
\end{thm}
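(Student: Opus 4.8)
The plan is to prove Theorem~\ref{natural} by a sequence of adjunction-type identifications, reducing the computation of natural transformations between the iterated Hochschild functors to the representable case, and then to invoke the homotopy invariance results of Section~\ref{htpysec} and Section~\ref{redsec} for the final equivalence $\bNat_\e \simeq \Nat_\e$.

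First I would establish the isomorphism $\Nat_\e(\oc{m_1}{n_1},\oc{m_2}{n_2})\cong D^{n_1}C^{n_2}\e(-,-)(m_1,m_2)$. The key input is the (enriched) Yoneda lemma: for a functor $G:\e\to\Comp$, the chain complex $\Hom_\e(\e(p,-),G)$ is naturally isomorphic to $G(p)$. The plan is to write $C^{n_1}_\e(\Phi)(m_1)\cong \bigoplus \Phi(j+m_1)\ot L_{j_1}\ot\cdots\ot L_{j_{n_1}}$ (using that $\LL$ is quasi-free, as recorded after Definition~\ref{HcHDef}), which exhibits $C^{(n_1,m_1)}_\e$ as a (co)limit of representables $\e(j+m_1,-)$ twisted by the graded modules $L_{j_i}$. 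Then $\Hom(C^{(n_1,m_1)}_\e, C^{(n_2,m_2)}_\e)$ becomes a product over the $j_i$'s of $\Hom(L_{j_1}\ot\cdots\ot L_{j_{n_1}}, C^{n_2}_\e(\e(j+m_1,-))(m_2))$ — here I use that $\Hom$ out of a direct sum is a product, and Yoneda to evaluate natural transformations out of each representable piece. The inner term $C^{n_2}_\e(\e(j+m_1,-))(m_2)$ is, by definition, the functor $C^{n_2}\e(-,-)$ evaluated appropriately; taking the $\Hom(L_{-},-)$'s outside and recognizing the resulting expression reproduces exactly the iterated coHochschild construction $D^{n_1}$ applied in the first variable (the shifts $[k-j+n_1-n_2]$ and the multi-Hochschild/coHochschild differentials match by construction of $C$ and $D$ in Definition~\ref{HcHDef}). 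Care must be taken that the differentials on both sides agree: the differential on $\Hom(C^{(n_1,m_1)}_\e, C^{(n_2,m_2)}_\e)$ has a contribution from the source (which is the multi-$L$ differential, i.e.~the multi-Hochschild differential that becomes the $D^{n_1}$-part after dualizing) and a contribution from the target (the multi-$L$ differential giving the $C^{n_2}$-part), plus the internal $d_\e$; this needs to be checked against the differential on $D^{n_1}C^{n_2}\e(-,-)$, and I expect this sign/differential bookkeeping to be the main technical obstacle.

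Second, the reduced statement $\bNat_\e(\oc{m_1}{n_1},\oc{m_2}{n_2})\cong \bD^{n_1}\bC^{n_2}\e(-,-)(m_1,m_2)$ follows by the same argument, replacing $\LL$ with its reduced version throughout: the reduced Hochschild complex $\bC(\Phi)(m)$ is a quotient $\bigoplus \Phi(n+m)/U_n \ot L_n$, and dualizing, the reduced coHochschild complex $\bD$ uses the kernels $K(n)$. The Yoneda-style computation goes through verbatim once one checks that applying $\Hom(-, \bC^{(n_2,m_2)}_\e)$ to the quotient $\bC^{(n_1,m_1)}_\e$ lands in the subcomplex defined by the $K(n)$-conditions — this is essentially the statement that the reduced source kills unit-insertions, which is exactly the defining property of $\Ai^+$ and was already used in Proposition~\ref{diffprop}.

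Finally, for the equivalence $\bNat_\e \simeq \Nat_\e$, the plan is to combine Proposition~\ref{conormalization} and Proposition~\ref{normalization} with Proposition~\ref{htpy}. By Proposition~\ref{normalization} the quotient $C(\Phi)\to\bC(\Phi)$ is a pointwise quasi-isomorphism of functors, hence iterating, $C^{n_2}\e(-,-)\to \bC^{n_2}\e(-,-)$ is a quasi-isomorphism in each variable. Applying $\bD^{n_1}$ (which is homotopy invariant by Proposition~\ref{conormalization} composed with Proposition~\ref{htpy}, or directly by the filtration argument of Section~\ref{htpysec}) preserves this quasi-isomorphism; and $\bD^{n_1}C^{n_2}\e(-,-)(m_1,m_2)\inc D^{n_1}C^{n_2}\e(-,-)(m_1,m_2)$ is a quasi-isomorphism by iterating Proposition~\ref{conormalization}. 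Chaining these together with the isomorphisms of the first two parts yields $\bNat_\e\simeq \Nat_\e$. The one subtlety here is ensuring homotopy invariance of the iterated constructions in both the covariant and contravariant variables simultaneously, which should reduce to the single-variable statements already proved together with the fact that quasi-isomorphisms of bifunctors in one variable at a time compose.
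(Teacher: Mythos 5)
Your proposal is correct and follows essentially the same route as the paper: the paper's explicit maps $F$ (the action on representables) and $G$ (evaluation of a natural transformation at $id_{j+m_1}\ot l_{\uj}$ in $C^{n_1}(\e(j+m_1,-))(m_1)$) are precisely the hands-on form of your enriched Yoneda reduction, including the chain-map/differential bookkeeping you flag. The reduced case and the final zigzag $\bNat_\e\simeq\Nat_\e$ are likewise handled in the paper via Propositions~\ref{conormalization}, \ref{normalization} and \ref{htpy} (plus the homotopy invariance of $C$ from \cite{WahWes08}), just as you propose, up to the inessential choice of intermediate complex in the zigzag.
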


More explicitly, this says that 
$$\Nat_\e(\oc{m_1}{n_1},\oc{m_2}{n_2})\cong\!\!\prod_{j_1,\dots,\, j_{n_1}\ge 1}\ \bigoplus_{k_1,\dots,\, k_{n_2}\ge 1} 
\e(j+m_1,k+m_2)\,[k-j+n_1-n_2] $$
with $j=j_1+\dots+j_{n_1}$, $k=k_1+\dots+k_{n_2}$, and 
where the square brackets indicate the degree shift. The  
differential is the sum of the differential of $\e$ and the Hochschild and coHochschild differentials. 
Explicitly, in the case $n_1=1=n_2$ for simplicity, an element $\u{g}\in \Nat_\e(\oc{m_1}{1},\oc{m_2}{1})$ under the above
isomorphism has the form $\u{g}=\{g_j\}_{j\ge 1}$ with 
$g_j=g_{j,k_1}+\dots+g_{j,k_{r_j}}$ and $g_{j,k_i}\in\e(j+m_1,k_i+m_2)$. Its differential has $\uj$th component 
$d(\u{g})_j$ given by 
$$(-1)^{j-1}\Big(d_\e(g_j)+\sum_{i=1}^{r_j}\sum_{k=1}^{k_i-1}(-1)^{|g_{j,k_i}|}(f_{k_i,k}+id_{m_2})\circ g_{j,k_i} - \sum_{j'=1}^{j-1}g_{j'}\circ (f_{j,j'}+id_{m_1})  \Big).$$

\medskip

We say that a functor $\e\to\e'$ is a {\em quasi-isomorphism of categories} if it is the identity on objects and it induces a quasi-isomorphism on each morphism complex. 

Using the homotopy invariance of the Hochschild and coHochschild constructions (Proposition~\ref{htpy} and \cite[Prop 5.6]{WahWes08}), we immediately 
get the following corollary:

\begin{cor}\label{Nathtpy}
A quasi-isomorphism of categories $\e\arsim\e'$ induces a quasi-isomorphism $\Nat_{\e}\arsim\Nat_{\e'}$. 
\end{cor}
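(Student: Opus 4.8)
The plan is to deduce Corollary~\ref{Nathtpy} directly from Theorem~\ref{natural} together with the two homotopy-invariance statements for the Hochschild and coHochschild constructions. By Theorem~\ref{natural} we have a natural isomorphism of chain complexes $\Nat_\e(\oc{m_1}{n_1},\oc{m_2}{n_2})\cong D^{n_1}C^{n_2}\e(-,-)(m_1,m_2)$, and likewise for $\e'$. A quasi-isomorphism of categories $\eta:\e\arsim\e'$ is the identity on objects and a pointwise quasi-isomorphism on morphism complexes, so in particular, for each fixed $p$, the induced map $\e(p,-)\to\e'(p,-)$ is a quasi-isomorphism of functors $\e\to\Comp$ (and similarly $\e(-,q)\to\e'(-,q)$ on $\e^{op}$). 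The point is then to check that applying $C$ and $D$ repeatedly preserves this property, and that the resulting maps are compatible with the identifications of Theorem~\ref{natural}.

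First I would recall that \cite[Prop.~5.6]{WahWes08} gives that $C$ sends quasi-isomorphisms of functors to quasi-isomorphisms, and Proposition~\ref{htpy} above gives the same for $D$. Iterating, $C^{n_2}$ and $D^{n_1}$ each preserve quasi-isomorphisms. One subtlety is that we are applying these constructions to the two-variable functor $C^{n_1}\e(-,-):\e^{op}\times\e\to\Comp$ of \cite[Prop.~5.5]{WahWes08}: here I would note that the Hochschild construction $C$ acts on the second ($\e$) variable and the coHochschild construction $D$ on the first ($\e^{op}$) variable, and that ``pointwise quasi-isomorphism'' in the two-variable sense just means a quasi-isomorphism after fixing either variable, so the one-variable homotopy-invariance statements apply verbatim with the other variable held fixed. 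Thus $\eta$ induces, for each $(m_1,m_2)$, a quasi-isomorphism $D^{n_1}C^{n_2}\e(-,-)(m_1,m_2)\arsim D^{n_1}C^{n_2}\e'(-,-)(m_1,m_2)$.

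The remaining bookkeeping step is to verify that the square relating $\Nat_\e$, $\Nat_{\e'}$ and these $D^{n_1}C^{n_2}$ complexes commutes, i.e.\ that the isomorphisms of Theorem~\ref{natural} are natural in $\e$. This is essentially built into the proof of Theorem~\ref{natural}, since the identification is constructed functorially from the Yoneda-type description of $C_\e^{(n,m)}$ in terms of representable functors; a functor $\e\to\e'$ over the same object set induces compatible maps on both sides, so naturality is formal. One then concludes that $\Nat_\e(\oc{m_1}{n_1},\oc{m_2}{n_2})\arsim\Nat_{\e'}(\oc{m_1}{n_1},\oc{m_2}{n_2})$ for all bi-indices, which is the assertion that $\Nat_\e\arsim\Nat_{\e'}$ is a quasi-isomorphism of categories (it is visibly the identity on objects). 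The main obstacle, such as it is, is not conceptual but is the care needed in the two-variable setting: one must be sure that the cited homotopy-invariance results — stated for functors on $\e$ or $\e^{op}$ — really do apply when $C$ and $D$ are the self-maps of $\Fun(\e,\Comp)$ and $\Fun(\e^{op},\Comp)$ acting on the separate variables of the bifunctor $C^{n_1}\e(-,-)$, and that iterating does not introduce any convergence issue; but since $C$ is a direct sum and $D$ is a product over which the relevant spectral sequence converges (Proposition~\ref{htpy} via the Eilenberg--Moore comparison theorem), this is handled by the results already in place.
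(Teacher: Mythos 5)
Your proposal is correct and follows essentially the same route as the paper: the paper deduces the corollary "immediately" from Theorem~\ref{natural} combined with the homotopy invariance of $C$ (\cite[Prop.~5.6]{WahWes08}) and of $D$ (Proposition~\ref{htpy}), exactly as you do. The extra care you take with the two-variable functor and the naturality of the identification is precisely the bookkeeping the paper leaves implicit, and it goes through as you describe.
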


To prove Theorem~\ref{natural}, we first show that the above chain complex does act by natural transformations on the
Hochschild complex of generalized $\e$-algebras. 

\begin{lem}
For any $\Phi:\e\to\Comp$, there is a chain map 
$$C^{n_1}(\Phi)(m_1)\ot D^{n_1}C^{n_2}\e(-,-)(m_1,m_2)\rar C^{n_2}(\Phi)(m_2)$$
which is natural in $\Phi$. This map restricts to a map 
$$\bC^{n_1}(\Phi)(m_1)\ot \bD^{n_1}\bC^{n_2}\e(-,-)(m_1,m_2)\rar \bC^{n_2}(\Phi)(m_2)$$
in the reduced case. 
\end{lem}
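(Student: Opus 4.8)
The plan is to write down the action map by hand, from the functoriality of $\Phi$, using the explicit graded-module descriptions of the iterated (co)Hochschild complexes from Section~\ref{sec1}, and then to verify the two assertions (chain map, naturality) and the reduced refinement directly. Recall that, $\LL$ being quasi-free, $C^{n_1}(\Phi)(m_1)\cong\bigoplus_{j_1,\dots,j_{n_1}\ge1}\Phi(j+m_1)\ot L_{j_1}\ot\cdots\ot L_{j_{n_1}}$ as a graded module, with $j=j_1+\dots+j_{n_1}$; write $l_{\uj}:=l_{j_1}\ot\cdots\ot l_{j_{n_1}}$ for the corresponding free generator, and similarly $l_{\uk}:=l_{k_1}\ot\cdots\ot l_{k_{n_2}}$. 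By the assembly of the representables into a bifunctor (\cite[Prop.~5.5]{WahWes08}, as recalled above), $D^{n_1}C^{n_2}\e(-,-)(m_1,m_2)$ is $D^{n_1}$ applied to the functor $p\mapsto C^{n_2}(\e(p,-))(m_2)\cong\bigoplus_{\uk}\e(p,k+m_2)\ot L_{k_1}\ot\cdots\ot L_{k_{n_2}}$ and evaluated at $m_1$; as a graded module it is therefore $\prod_{\uj}\bigoplus_{\uk}\e(j+m_1,k+m_2)\ot\Hom(L_{\uj},L_{\uk})$, so an element $\u{g}$ is a family $(\u{g}_{\uj})_{\uj}$ with each $\u{g}_{\uj}=\sum_{\uk}g_{\uj,\uk}\ot l_{\uk}$ a finite sum and $g_{\uj,\uk}\in\e(j+m_1,k+m_2)$. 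I define $\operatorname{ev}$ on generators by
\[
\operatorname{ev}\big((x\ot l_{\uj})\ot\u{g}\big)=\sum_{\uk}\ \pm\,\Phi(g_{\uj,\uk})(x)\ot l_{\uk}\ \in\ \bigoplus_{\uk}\Phi(k+m_2)\ot L_{\uk}=C^{n_2}(\Phi)(m_2),
\]
where $\Phi(g_{\uj,\uk})\colon\Phi(j+m_1)\to\Phi(k+m_2)$ is the value of the functor $\Phi$ on the morphism $g_{\uj,\uk}$ of $\e$ and the sign is the Koszul sign for permuting the tensor factors. There is nothing to check for well-definedness, since the $L_n$ are free of rank one and the relations built into the constructions $C$ and $D$ have already been absorbed into the isomorphisms above; and naturality in $\Phi$ is immediate, since for $\eta\colon\Phi\Rightarrow\Phi'$ one has $\eta_{k+m_2}\circ\Phi(g_{\uj,\uk})=\Phi'(g_{\uj,\uk})\circ\eta_{j+m_1}$.

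The content is the verification that $\operatorname{ev}$ is a chain map, which I would carry out by decomposing the differential of the source and matching it termwise against the differential of $C^{n_2}(\Phi)(m_2)$. On the source the differential is a sum of: the internal differential $d_\Phi$ on $\Phi(j+m_1)$; the $n_1$-fold Hochschild differential of $C^{n_1}(\Phi)(m_1)$, whose terms apply maps $\Phi(i(f_{j_a,j'_a})+id_{m_1})$ to $x$; the internal differential $d_\e$ on $\e(j+m_1,k+m_2)$; the $n_2$-fold Hochschild differential on the $C^{n_2}$-part of the $\u{g}$-factor, which postcomposes the $g_{\uj,\uk}$ with $i(f_{k_b,k'_b})+id_{m_2}$; and the $n_1$-fold coHochschild differential on the $D^{n_1}$-part of the $\u{g}$-factor, which precomposes $g_{\uj',\uk}$ with $i(f_{j_a,j'_a})+id_{m_1}$, raising the index from $\uj'$ to $\uj$. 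On the target only $d_\Phi$ and the $n_2$-fold Hochschild differential occur. Matching: (a) since $\Phi$ is a dg-functor, $d_\Phi\Phi(g)-\Phi(g)d_\Phi=\pm\,\Phi(d_\e g)$, so the $d_\Phi$-on-$x$ and $d_\e$-on-$g$ contributions together recover the $d_\Phi$-terms of the target; (b) by functoriality, applying $\Phi$ to the composite of $g_{\uj,\uk}$ with $i(f_{k_b,k'_b})+id_{m_2}$ equals postcomposing $\Phi(g_{\uj,\uk})(x)$ with $\Phi(i(f_{k_b,k'_b})+id_{m_2})$, which is precisely a term of the Hochschild differential of $\Phi(g_{\uj,\uk})(x)$ in $C^{n_2}(\Phi)(m_2)$; (c) again by functoriality, applying $\Phi$ to the composite of $g_{\uj',\uk}$ with $i(f_{j_a,j'_a})+id_{m_1}$ equals first applying $\Phi(i(f_{j_a,j'_a})+id_{m_1})$ to $x$ and then $\operatorname{ev}$-pairing with $\u{g}_{\uj'}$, so the contribution of the coHochschild differential of $\u{g}$ and the contribution of the Hochschild differential of $C^{n_1}(\Phi)(m_1)$ produce the same family of terms, and in the Leibniz identity $d(\operatorname{ev}(a\ot b))=\operatorname{ev}(da\ot b)+(-1)^{|a|}\operatorname{ev}(a\ot db)$ they occur with opposite sign and cancel. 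What survives is exactly $d_{C^{n_2}(\Phi)(m_2)}\circ\operatorname{ev}$.

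The main obstacle is the sign bookkeeping underlying (c): one must check that, with the explicit $(-1)^{n-1}$ and $(-1)^{|x|}$ twists built into the (co)Hochschild differentials in Section~\ref{sec1}, the two families of terms really do cancel. As elsewhere in this paper this is most transparently done by tracking orientations of black-and-white graphs in the sense of \cite{WahWes08} rather than bare signs.

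Finally, for the reduced statement I would observe that $\operatorname{ev}$ is compatible with the reductions on all three families of circles. If $x$ lies in the image of a unit insertion $\Phi(i^+(u_\ell)+id_{m_1})$ on one of the $n_1$ input slots (some $\ell\ge2$), then for $\u{g}\in\bD^{n_1}\bC^{n_2}\e(-,-)(m_1,m_2)$ the component $g_{\uj,\uk}$ lies in the corresponding kernel $K=\bigcap_{\ell\ge2}\ker(u_\ell^{op})$, so the composite of $g_{\uj,\uk}$ with $i^+(u_\ell)+id_{m_1}$ vanishes in $\e$ and hence $\Phi(g_{\uj,\uk})(x)=0$; dually, replacing $g_{\uj,\uk}$ by a morphism factoring through a unit insertion on an output slot changes $\operatorname{ev}$ by an element in the unit ideal of $C^{n_2}(\Phi)(m_2)$, hence by $0$ in $\bC^{n_2}(\Phi)(m_2)$. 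Thus $\operatorname{ev}$ descends to the asserted map $\bC^{n_1}(\Phi)(m_1)\ot\bD^{n_1}\bC^{n_2}\e(-,-)(m_1,m_2)\to\bC^{n_2}(\Phi)(m_2)$, using Propositions~\ref{diffprop} and \ref{normalization} to know that the reduced differentials are the induced ones.
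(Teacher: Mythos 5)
Your map is the same one the paper uses --- indeed the paper's own ``more explicit'' description of the action is exactly your formula $\operatorname{ev}\big((x\ot l_{\uj})\ot\u{g}\big)=\sum_{\uk}\pm\,\Phi(g_{\uj,\uk})(x)\ot l_{\uk}$ --- and your naturality and reduced-case arguments agree in substance with the paper's (your observation about output-slot unit insertions, handled because the target is the reduced quotient, is a point the paper leaves implicit; note only that the kernel condition in $\bD$ gives that $u_\ell\circ g_{\uj,\uk}$ vanishes modulo the output-unit submodule $U$, not in $\e$ itself, which is still enough for exactly the reason you give in your ``dually'' clause). Where you genuinely diverge is in how the chain-map property is established. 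The paper never does a term-by-term check: it defines the action as the composite of two manifestly chain maps, namely the evaluation of $\Hom_{\Ai^{op}}(\LL,\,\e(-+m_1,-+m_2)\ot_{\Ai}\LL)$ on the $\LL$-factor of $\Phi(-+m_1)\ot_{\Ai}\LL$, followed by the map induced by the dg-action of $\e$ on $\Phi$. Since the differentials on $\Hom$-complexes and on $\ot_{\Ai}$ are defined precisely so that evaluation and the structure maps are chain maps, compatibility with all three differentials (internal, Hochschild, coHochschild) and all Koszul signs comes for free; the coHochschild-versus-Hochschild cancellation you describe in step (c) is exactly the $\ot_{\Ai}$ relation. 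In your computational route that cancellation is the actual content, and you explicitly defer it (``the main obstacle is the sign bookkeeping''), so as written your verification of the chain-map property is incomplete at its one substantive point --- the terms do match and do cancel with the paper's sign conventions, but that check is what your proof still owes, and it is precisely what the structural formulation via $\ot_{\Ai}$ and $\Hom_{\Ai^{op}}$ buys you.
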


\begin{proof}
To simplify notations write $\u{j}=(j_1,\dots,j_{n_1})$, $j=j_1+\dots+j_{n_1}$ and $L_{\u{j}}=L_{j_1}\ot\dots\ot
L_{j_{n_1}}$,  and similarly for $\uk=(k_1,\dots,k_{n_2})$. 
In the unreduced case, the action is defined as follows:
$$\xymatrix@R=1pc{\big(\Phi(-+m_1)\ot_{\Ai} \LL\big) \ot \Hom_{\Ai^{op}}(\LL,\e(-+m_1,-+m_2)\ot_{\Ai} \LL)
  \ar[d]\\ 
\Phi(-+m_1)\ot_{\Ai}\e(-+m_1,-+m_2)\ot_{\Ai} \LL
  \ar[d]\\ 
\Phi(-+m_2)\ot_{\Ai} \LL
}$$
where the first arrow applies the homomorphism to the $\Ai^{op}$--module $\LL$ used to define the Hochschild complex in Section~\ref{sec1}, and 
the second arrow is induced by the action of $\e$ on $\Phi$. As the action on $\Phi$ is via its $\e$-structure, 
it is natural with respect to natural transformations of functors $\Phi\to\Phi'$. 

More explicitly, the action is the composition 
$$\xymatrix@R=1pc{\big(\bigoplus_{\uj}\Phi(j+m_1)\ot L_{\uj}\big) \ot \prod_{\uj}\Hom(L_{\uj},\oplus_{\uk}\e(j+m_1,k+m_2)\ot L_{\u{k}})
  \ar[d]\\ 
\oplus_{\uj,\u{k}}\Phi(j+m_1)\ot\e(j+m_1,k+m_2)\ot L_{\u{k}}
  \ar[d]\\ 
\oplus_{\u{k}}\Phi(k+m_2)\ot L_{\u{k}}
}$$
which restricts to a well-defined action of $\bD^{n_1}\bC^{n_2}\e(-,-)(m_1,m_2)$ in the reduced case because the reduced coHochschild complex only 
involves homomorphisms with image in the kernel of precomposition by the maps
$u_i$ whose images are quotiented out in the reduced Hochschild complex of $\Phi$. 
\end{proof}

This lemma gives a map 
$$F: D^{n_1}C^{n_2}\e(-,-)(m_1,m_2)\to \Nat_\e(\oc{m_1}{n_1},\oc{m_2}{n_2})$$
(and reduced version) 
which is easily seen to be injective using as $\Phi$'s the representable functors $\e(p,-)$. 
The composition on the complexes $D^{n_1}C^{n_2}\e(-,-)(m_1,m_2)$ for varying $m_i$'s and $n_i$'s
which corresponds to the composition of natural transformations in $\Nat_\e$ under this map is of
the form 
$$\xymatrix@R=1pc{\prod_j\Hom(L_j,\oplus_k\e(j,k)\ot L_k)\ot \prod_k\Hom(L_k,\oplus_l\e(k,l)\ot L_l)\ar[d]\\
\prod_j\Hom(L_j,\oplus_{k,l}\e(j,k)\ot\e(k,l)\ot L_l)\ar[d]\\
\prod_j\Hom(L_j,\oplus_{l}\e(j,l)\ot L_l).}$$ 

\begin{proof}[Proof of Theorem~\ref{natural}]
We will construct a pointwise  inverse to the map $F$ given by the lemma above. 
For each $m_1,m_2,n_1,n_2\ge 0$, a map
\begin{align*}
G: \Nat_\e(\oc{m_1}{n_1},\oc{m_2}{n_2})\ \rar & \ \ D^{n_1}C^{n_2}\e(-,-)(m_1,m_2)\\
&=\prod_{\uj}\Hom(L_{\uj},\bigoplus_{\uk}\e(j+m_1,k+m_2)\ot L_{\uk})
\end{align*}
is determined by giving, for each natural transformation $\nu$ and each tuple $\uj=(j_1,\dots,j_{n_1})$, an element 
$G(\nu)_{\uj}(l_{\uj})\in \bigoplus_{\uk}\e(j+m_1,k+m_2)\ot L_{\uk}$, for $l_{\uj}$ the generator of $L_{\uj}$. 
To produce such an element given tuple $\uj=(j_1,\dots,j_{n_1})$, we consider the action of $\Nat_\e$ on the Hochschild complex of the 
functor $\e(j+m_1,-)$ for $j=j_1+\dots+j_{n_1}$:  \\
\mbox{\resizebox{.99\hsize}{!}{
$\xymatrix{{ C^{n_1}(\e(j+m_1,-))(m_1) \ot \Nat_\e(\oc{m_1}{n_2},\oc{m_2}{n_2})} \ar[d]_= \ar[r]^-\al & C^{n_2}(\e(j+m_1,-))(m_2) \\
\big(\bigoplus_{\uj'}\e(j+m_1,j'+m_1)\ot L_{\uj'}\big)\ot \Nat_\e(\oc{m_1}{n_2},\oc{m_2}{n_2})\ar[r]&
\bigoplus_{\uk}\e(j+m_1,k+m_2)\ot L_{\uk}. \ar[u]^=}$}}\\
We define $$G(\nu)_{\uj}(l_{\uj}):=\nu(id_{j+m_1}\ot l_{\uj})=\al((id_{j+m_1}\ot l_{\uj})\ot \nu).$$

We check that $G$ is a chain map: as $\al$ is a chain map, we have that 
$G(d\nu)_{\uj}(l_{\uj})=(-1)^{j-1}(d(\nu(id_{j+m_1}\ot l_{\uj}))-\nu(d(id_{j+m_1}\ot l_{\uj})))$. The first term
gives exactly the differential of the Hochschild complex of $\e$, so we are left to identify the second term with the coHochschild
differential. We have $d(id_{j+m_1}\ot l_{\uj})=\sum_{\u{j'}<\uj}f_{\uj,\u{j'}}\ot l_{\u{j'}}$, where $\u{j'}<\uj$ are the tuples
$(j'_1,\dots,j_{n_1}')$ with $j'_i=j_i$ expect for one $i=s$ where $j_s'<j_s$, and 
$f_{\uj,\u{j'}}=(-1)^{j_1+\dots+j_{s-1}-s+1}id+\dots+ f_{j_s,j'_s}+
\dots+ id$. 
By naturality, $\nu(\sum_{\u{l}}f_{\uj,\u{l}}\ot l_{\u{l}})=\sum_{\u{l}}(\nu(id_{l+m_1}\ot l_{\u{l}})\circ (f_{\uj,\u{l}}+id_{m_1}))$, which is the
coHochschild differential. 

One easily checks that $G\circ F$ is the identity on $D^{n_1}C^{n_2}\e(-,-)(m_1,m_2)$: an element in
$\u{g}\in \prod_{\uj}\Hom(L_{\uj},\bigoplus_{\uk}\e(j+m_1,k+m_2)\ot L_{\uk})$ defines a natural transformation via the map $F$ which takes
$id_{j+m_1,-}\ot l_{\uj}$ to its own $j$th component $g_j$. To check that $F\circ G$ is the identity
on $\Nat_\e(\oc{m_1}{n_2},\oc{m_2}{n_2})$, we need to check that the natural transformation $F\circ G(\nu)$ acts the same way as
$\nu$ on any functor $\Phi:\e\to\Comp$. 
 Consider an element $x\ot l_{\uj}\in C^{n_1}(\Phi)(m_1)=\oplus_{\uj}\Phi(j+m_1)\ot L_{\uj}$.  
There is an associated natural transformation
$\nu_x:\e(j+m_1,-)\to\Phi$ defined by evaluation at $x$ which takes $id_{j+m_1}$ to $x$. 
By naturality of the natural transformations, we have a commutative diagram 
$$\xymatrix{C^{n_1}(\e(j+m_1,-))(m_1) \ot \Nat_\e(\oc{m_1}{n_1},\oc{m_2}{n_2}) \ar[r]^-{\al} \ar[d] &  C^{n_2}(\e(j+m_1,-))(m_2) \ar[d]\\
C^{n_1}(\Phi)(m_1) \ot \Nat_\e(\oc{m_1}{n_1},\oc{m_2}{n_2}) \ar[r]^-{\al} & C^{n_2}(\Phi)(m_2) 
}$$
where the vertical arrows are induced by the natural transformations.  
By definition, the actions of $\nu$ and $F\circ G(\nu)$ are the same on the elements of the type $id_{j+m_1}\ot l_{\uj}$, 
and thus they agree on $x\ot l_{\uj}$ by 
the commutativity of the
above diagram. 

Note again that the inverse $G$ restrict to an inverse in the reduced case. 

For the quasi-isomorphism between the reduced and unreduced cases, we use 
the sequence of quasi-isomorphisms  
$$\xymatrix{\bD^{n_1}\bC^{n_2}\e(-,-)(m_1,m_2) \ar[r]^{\simeq} & D^{n_1}\bC^{n_2}\e(-,-)(m_1,m_2)\\
& D^{n_1}C^{n_2}\e(-,-)(m_1,m_2) \ar[u]_\simeq}$$
given by first applying Proposition~\ref{conormalization} iteratively, together with Proposition~\ref{htpy}, and then
Proposition~\ref{normalization} together with \cite[Prop 4.5]{WahWes08}. 
\end{proof}

\begin{rem}{\rm 
The category $\Nat_\e$ is an {\em extension of the Hochschild core category of $\e$} in the sense of \cite[Sec 5.2]{WahWes08} as it satisfies that 
$$\Nat_\e(\oc{m_1}{0},\oc{m_2}{n_2})\cong C^{n_2}(\e(m_1,-))(m_2).$$
Now by Corollary~5.12 of \cite{WahWes08}, any extension $\widetilde\e$ of $\e$ admits a functor $\widetilde\e\to\Nat_\e$ as it acts by natural
transformations on the Hochschild complex of functors $\Phi:\e\to\Comp$. Another characterization of $\Nat_\e$ is that it is the universal extension of $\e$.  
}\end{rem}

\subsection{Restriction to natural operations}\label{monoidalsec}

Recall from the introduction the category $\Fun^\ot(\e,\Comp)$ of $\e$--algebras whose objects are strong symmetric monoidal dg-functors 
$\Phi:\e\to\Comp$, i.e.~with natural isomorphisms 
$\Phi(n)\ot\Phi(m)\sta{\cong}{\rar}\Phi(n+m)$ compatible with the symmetries in $\e$, and whose morphisms are natural transformation $\theta:\Phi\to
\Psi$  satisfying
$$\theta_n=(\theta_1)^{\ot  n}\colon\ \Phi(n)\cong\Phi(1)^{\ot n}\rar \Psi(n)\cong\Psi(1)^{\ot n}$$ 
with $\theta_1:\Phi(1)\to \Psi(1)$ a chain map. 
 
\smallskip

Writing $C_\e^{\ot (n,m)}$ for the functor taking $\Phi$ to 
$\big(C(\Phi)(0)\big)^{\ot n}\ot \big(\Phi(1)\big)^{\ot m}$,  Proposition~5.10 of \cite{WahWes08} says that the diagram  
$$\xymatrix{\Fun(\e,\Comp) \ar[rr]^-{C_\e^{(n,m)}} && \Comp\\
\Fun^\ot(\e,\Comp) \ar@{^(->}[u] \ar[urr]_{\ \ \ C_\e^{\ot (n,m)}} &
&
}$$
commutes. Note that $\Fun^{\ot}(\e,\Comp)$ is not a dg-category as such, but it can be replaced, if one wants to stay in the world of dg-categories, by the sub-dg-category of $\Fun(\e,\Comp)$ it generates, that is allowing sums of monoidal natural transformations even when these are not themselves monoidal. 
Let 
$$\Nat_\e^\ot(\oc{m_1}{n_1},\oc{m_2}{n_2}):=\Hom\big(C_\e^{\ot (n_1,m_1)},C_\e^{\ot (n_2,m_2)}\big)$$
denote the chain complex of natural transformations of these functors. 
There is  a restriction map 
$$r: \Nat_\e(\oc{m_1}{n_1},\oc{m_2}{n_2}) \rar \Nat^\ot_\e(\oc{m_1}{n_1},\oc{m_2}{n_2})$$
as a natural transformation on all functors gives in particular a natural transformation on the subcategory of symmetric monoidal functors. 
We will show in this section that the restriction map $r$ is an isomorphism
provided that $\e$ is ``complete'' in a sense defined below.

\medskip

Recall that $\Nat_\e(\oc{m_1}{0},\oc{m_2}{0})\cong \e(m_1,m_2)$. On the other hand, 
 $\Nat^\ot_\e(\oc{m_1}{0},\oc{m_2}{0})=\Hom(U^{\ot m_1},U^{\ot m_2})$ for 
$U:\Fun^{\ot}(\e,\Comp)\to \Comp$ the forgetful functor from the category of $\e$--algebras to the category of chain complexes taking $\Phi$ to $\Phi(1)$, and
$U^{\ot m_i}$ its tensor powers. 
 So when 
$n_1=0=n_2$, the restriction map $r$ above is the representation map of the prop $\e$, i.e.~the map 
$$\rho: \e(m_1,m_2) \rar \Hom(U^{\ot m_1},U^{\ot m_2})$$
which associates to a morphism of $\e$ the operation it induces on all $\e$--algebras.

\begin{Def}\label{compdef}
We say that a prop $\e$ is {\em complete} if the representation functor $\rho$ defined above is an isomorphism. If $\e$ is not complete, we call the
target of $\rho$, the prop of natural transformations of the forgetful functors, its {\em completion}, and denote it $\widehat\e$. 
\end{Def}

So given a prop $\e$,  its completion $\widehat\e$ is a prop defined by 
$$\widehat\e(m_1,m_2)=\Hom(U^{\ot m_1},U^{\ot m_2})=\Nat_\e^\ot(\oc{m_1}{0},\oc{m_2}{0})$$
with $U$ as above.

\begin{lem}
Let $\e$ be a prop and $\widehat\e$ its completion. Then their categories of algebras are isomorphic:  $\Fun^\ot(\e,\Comp)\cong\Fun^\ot(\widehat\e,\Comp)$.
\end{lem}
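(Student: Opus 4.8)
The plan is to produce mutually inverse functors between $\Fun^\ot(\e,\Comp)$ and $\Fun^\ot(\widehat\e,\Comp)$ and check they are the identity on underlying data. First I would observe that a strong symmetric monoidal functor $\Phi:\e\to\Comp$ is determined by its value $A=\Phi(1)$ together with the induced $\e$-action, i.e.\ for each pair $m_1,m_2$ a chain map $\e(m_1,m_2)\to\Hom(A^{\ot m_1},A^{\ot m_2})$ compatible with composition, units and the symmetric monoidal structure. Running over all $\e$-algebras $\Phi$ at once, the assignment $\Phi\mapsto(\Phi\text{ induces an operation on }A)$ is exactly the representation map $\rho$, so by the very definition of $\widehat\e$ there is a canonical symmetric monoidal dg-functor $\iota:\e\to\widehat\e$ (the identity on objects) through which the action of \emph{every} $\e$-algebra factors. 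Precomposition with $\iota$ gives a functor $\iota^*:\Fun^\ot(\widehat\e,\Comp)\to\Fun^\ot(\e,\Comp)$.

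Next I would build the inverse: given a strong symmetric monoidal $\Phi:\e\to\Comp$ with $A=\Phi(1)$, the operations it induces assemble, again by definition of $\widehat\e$, into a symmetric monoidal functor $\widehat\Phi:\widehat\e\to\Comp$ with $\widehat\Phi(1)=A$ and $\widehat\Phi(m)=A^{\ot m}$ on objects; one must check this is well-defined on morphisms of $\widehat\e$ (immediate, since $\widehat\e(m_1,m_2)=\Hom(U^{\ot m_1},U^{\ot m_2})$ acts on $A^{\ot m_1}\to A^{\ot m_2}$ tautologically via the evaluation at $\Phi$) and that it is strong symmetric monoidal (the isomorphisms $A^{\ot m_1}\ot A^{\ot m_2}\cong A^{\ot(m_1+m_2)}$ are inherited from those of $\Phi$, and compatibility with symmetries holds because $\rho$ is a map of props). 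On morphisms of $\e$-algebras, a monoidal natural transformation $\theta:\Phi\to\Psi$ is determined by $\theta_1:A\to B$, and naturality with respect to all morphisms of $\widehat\e$ follows from naturality with respect to all $\e$-algebra maps (this is precisely the content of $\widehat\e$ being built from natural transformations of the forgetful functor). This defines $\Phi\mapsto\widehat\Phi$ on objects and morphisms, giving a functor $\Fun^\ot(\e,\Comp)\to\Fun^\ot(\widehat\e,\Comp)$.

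Finally I would check the two composites are the identity functors. One composite sends $\Phi$ to $\iota^*\widehat\Phi$; since $\widehat\Phi(\iota(f))$ is, by construction, the operation induced by $\Phi$ on $A$ evaluated at $f$, which is just $\Phi(f)$, we get $\iota^*\widehat\Phi=\Phi$ on the nose, and similarly on morphisms. The other composite sends a $\widehat\e$-algebra $\Psi$ to $\widehat{\iota^*\Psi}$; here $\widehat{\iota^*\Psi}(g)$ for $g\in\widehat\e(m_1,m_2)$ is the operation on $\Psi(1)$ induced by the $\e$-algebra $\iota^*\Psi$, and one must see that this recovers $\Psi(g)$. This holds because $g$ itself \emph{is} a natural transformation of the forgetful functor, hence is determined by its value on each $\e$-algebra, in particular on $\iota^*\Psi$; the only mild subtlety is to be sure that varying over the single algebra $\iota^*\Psi$ suffices, which one handles exactly as in the proof of Theorem~\ref{natural} by testing against representable functors $\widehat\e(p,-)$, or equivalently by noting that $\widehat\e$ is by construction complete so its own representation map is an isomorphism.

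The main obstacle I anticipate is bookkeeping rather than conceptual: one has to be careful that the symmetric monoidal structure (the coherence isomorphisms and compatibility with the symmetry) transfers correctly under $\rho$, i.e.\ that $\widehat\e$ really is a symmetric monoidal prop and $\iota$ a symmetric monoidal functor, so that the notions of ``$\e$-algebra'' and ``$\widehat\e$-algebra'' match up including on the monoidal coherence data. Once that is in place the identification of the two composites with the identity is essentially a tautological unwinding of the definition of $\widehat\e$ as the prop of natural transformations of the forgetful functor.
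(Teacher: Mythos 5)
Your overall strategy coincides with the paper's: restriction along the representation map (your $\iota$, the paper's $\rho$) in one direction, the tautological action of $\widehat\e(m_1,m_2)=\Hom(U^{\ot m_1},U^{\ot m_2})$ making any $\e$--algebra into a $\widehat\e$--algebra in the other, and the observation that a map of $\e$--algebras is automatically a map of $\widehat\e$--algebras because elements of $\widehat\e$ are, by definition, natural with respect to such maps. These pieces, together with $\iota^*\widehat\Phi=\Phi$, are exactly what the paper's proof records (phrased there as: $\rho^*$ is the identity on objects, injective on morphisms since both hom-sets sit inside $\Hom(A,B)$ compatibly, and surjective on morphisms by the definition of $\widehat\e$).

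The genuine weak point is your verification of the other composite, $\widehat{\iota^*\Psi}=\Psi$, i.e.\ that for $g\in\widehat\e(m_1,m_2)$ one has $\Psi(g)=g_{\iota^*\Psi}$. Your stated reason --- that $g$ is determined by its components at all $\e$--algebras, in particular at $\iota^*\Psi$ --- is off-target: it is a statement about $g$, whereas what must be controlled is the value of the given functor $\Psi$ on the morphism $g$, which a priori need not be the tautological component. Neither of your two patches repairs this. Testing against representables $\widehat\e(p,-)$ as in the proof of Theorem~\ref{natural} is not available here, because those functors are not strong symmetric monoidal, hence are not objects of $\Fun^\ot(\widehat\e,\Comp)$, and the elements of $\widehat\e$ are only natural over maps of $\e$--algebras; the Yoneda-style argument of Theorem~\ref{natural} lives in $\Fun$, not $\Fun^\ot$. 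Appealing to completeness of $\widehat\e$ is circular in the paper's development, since that completeness is deduced as a corollary of this very lemma; and even granted, it only says the representation map of $\widehat\e$ is an isomorphism, while one would still need to identify it with the whiskering map $g\mapsto g\circ\iota^*$, which is essentially the point at issue. For comparison, the paper does not argue this step at all: it treats the $\widehat\e$--structure on an $\e$--algebra as the tautological one by definition and simply declares $\rho^*$ to be the identity on objects, concentrating its proof on the morphism level. So either adopt that formulation, or supply an actual argument for $\Psi(g)=g_{\iota^*\Psi}$ rather than the two shortcuts above.
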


\begin{proof}
The representation map $\rho$ above induces a functor 
$$\rho^*\colon \Fun^\ot(\widehat\e,\Comp)\rar\Fun^\ot(\e,\Comp)$$ by precomposition. 
On the other hand, any $\e$--algebra is an $\widehat\e$--algebra by definition of $\widehat\e$, hence this functor is the identity on objects. Now morphisms of $\e$/$\hat\e$--algebras are chain maps $A\to B$ that commute with the maps in $\e$/$\hat \e$. This implies directly that the functor is injective on morphisms as both $\e(A,B)$ and $\hat\e(A,B)$ are subsets of $\Hom(A,B)$ and the functor commutes with the inclusions. Surjectivity of $\rho^*$ follows from the definition of $\hat\e$: the morphisms in $\hat\e$ are precisely the natural transformations that commute with all maps of $\e$--algebras, so maps of $\e$--algebras are also maps of $\hat\e$--algebras. 
\end{proof}

As the completion of a prop is defined in terms of its category of algebras $\Fun^\ot(\e,\Comp)$, 
the lemma immediately implies the following 

\begin{cor}
The completion of a prop is complete. 
\end{cor}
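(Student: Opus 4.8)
The plan is to observe that the statement is essentially a formal consequence of the preceding Lemma together with the definition of the completion, so the proof should be a single short paragraph rather than a new argument. First I would unwind the definitions: by Definition~\ref{compdef}, a prop $\e'$ is complete precisely when the representation map $\rho'\colon \e'(m_1,m_2)\to \Hom(U^{\ot m_1},U^{\ot m_2})$ is an isomorphism for all $m_1,m_2\ge 0$, where $U\colon\Fun^\ot(\e',\Comp)\to\Comp$ is the forgetful functor $\Phi\mapsto\Phi(1)$. So to prove $\widehat\e$ is complete I must show that for $\e'=\widehat\e$ this map is an isomorphism.

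The key step is to note that, by definition, $\widehat\e(m_1,m_2)=\Hom(U^{\ot m_1},U^{\ot m_2})$ where $U$ here is the forgetful functor \emph{out of $\Fun^\ot(\e,\Comp)$}. But the preceding Lemma gives an isomorphism of categories $\Fun^\ot(\e,\Comp)\cong\Fun^\ot(\widehat\e,\Comp)$ which is the identity on objects and compatible with the forgetful functors to $\Comp$ (indeed the forgetful functor only remembers $\Phi(1)$, and $\rho^*$ does not change $\Phi(1)$). Hence the target $\Hom(U^{\ot m_1},U^{\ot m_2})$ computed using $\e$-algebras coincides with the target computed using $\widehat\e$-algebras. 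Therefore the representation map $\rho_{\widehat\e}\colon \widehat\e(m_1,m_2)\to\Hom(U^{\ot m_1},U^{\ot m_2})$ for $\widehat\e$ has, as its source, a group that is \emph{by definition} equal to its target; one then checks that under this identification $\rho_{\widehat\e}$ is the identity map (a morphism of $\widehat\e$, being by construction a natural transformation of forgetful functors, acts on $\widehat\e$-algebras exactly as itself), hence an isomorphism. This is precisely the content of the corollary.

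I would phrase the proof roughly as: ``By the Lemma, $\Fun^\ot(\e,\Comp)\cong\Fun^\ot(\widehat\e,\Comp)$ compatibly with the forgetful functors, so the completion of $\widehat\e$ is $\Hom(U^{\ot -},U^{\ot -})=\widehat\e$ itself, and under this identification the representation map $\rho$ is the identity; hence $\widehat\e$ is complete.'' I expect the only subtlety — the ``main obstacle,'' though it is minor — to be making fully precise that the isomorphism of algebra categories is genuinely compatible with the forgetful functors on the nose (not just up to natural isomorphism), so that the two Hom-complexes of natural transformations are literally the same chain complex and $\rho_{\widehat\e}$ is literally the identity; this follows because $\rho^*$ is the identity on objects and does not alter the underlying chain complex $\Phi(1)$ of an algebra, nor the underlying chain map of a morphism.
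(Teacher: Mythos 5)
Your argument is correct and is essentially the paper's own: the paper deduces the corollary immediately from the lemma $\Fun^\ot(\e,\Comp)\cong\Fun^\ot(\widehat\e,\Comp)$, on the grounds that the completion is defined purely in terms of the category of algebras and its forgetful functor, which is exactly the identification you spell out. Your extra care about the isomorphism being compatible with the forgetful functors is a reasonable elaboration of the same step, not a different route.
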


Similarly, when $\e$ is a prop with $\Ai$--multiplication, we get

\begin{cor}\label{compiso1}
Let $(\e,i)$ be a prop with $\Ai$--multiplication and $(\widehat\e,\rho\circ i)$ its completion. Then 
 $$\Nat^\ot_\e(\oc{m_1}{n_1},\oc{m_2}{n_2}) \cong \Nat^\ot_{\widehat\e}(\oc{m_1}{n_1},\oc{m_2}{n_2})$$
\end{cor}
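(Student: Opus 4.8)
The plan is to deduce Corollary~\ref{compiso1} directly from the preceding lemma together with the fact that the functors $C_\e^{\ot(n,m)}$ and $\bar C$-constructions used to define $\Nat^\ot_\e$ depend only on the category of $\e$--algebras and not on $\e$ itself. Concretely, recall that $\Nat^\ot_\e(\oc{m_1}{n_1},\oc{m_2}{n_2}) = \Hom\big(C_\e^{\ot(n_1,m_1)},C_\e^{\ot(n_2,m_2)}\big)$, where $C_\e^{\ot(n,m)}\colon \Fun^\ot(\e,\Comp)\to\Comp$ sends a strong symmetric monoidal functor $\Phi$ to $\big(C(\Phi)(0)\big)^{\ot n}\ot\big(\Phi(1)\big)^{\ot m}$. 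The key observation is that this target chain complex is built entirely out of $\Phi(1)$: the Hochschild complex $C(\Phi)(0)=\bigoplus_{k\ge 1}\Phi(1)^{\ot k}$ together with its Hochschild differential depends only on the underlying $\Ai$--algebra structure on $\Phi(1)$, which is determined by $i$ (respectively $\rho\circ i$) and is the \emph{same} $\Ai$--algebra whether we regard $A=\Phi(1)$ as an $\e$--algebra or as an $\widehat\e$--algebra.

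First I would invoke the lemma above to fix an isomorphism of categories $\rho^*\colon\Fun^\ot(\widehat\e,\Comp)\xrightarrow{\cong}\Fun^\ot(\e,\Comp)$ which is the identity on objects. Then I would check that this isomorphism is compatible with the Hochschild functors, i.e.\ that $C_{\widehat\e}^{\ot(n,m)} = C_\e^{\ot(n,m)}\circ\rho^*$ as functors $\Fun^\ot(\widehat\e,\Comp)\to\Comp$. This follows because, for a strong symmetric monoidal $\Psi\colon\widehat\e\to\Comp$, the functor $\rho^*\Psi = \Psi\circ\rho$ has $(\rho^*\Psi)(1)=\Psi(1)$, and the $\Ai$--multiplications induced via $i\colon\Ai\to\e$ on $(\rho^*\Psi)(1)$ agree with those induced via $\rho\circ i\colon\Ai\to\widehat\e$ on $\Psi(1)$, since $\rho\circ(\rho\circ i)(\gamma)$ and $\rho(i(\gamma))$ act identically on every algebra by definition of $\rho$ (the representation map is functorial and $\rho$ is injective on the image, being the identification used to define $\widehat\e$). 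Hence the two sides of $C^{\ot(n,m)}$ applied to corresponding functors coincide on the nose, including differentials.

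Given this compatibility, precomposition with the isomorphism of categories $\rho^*$ induces an isomorphism on Hom-complexes of functors:
$$\Hom\big(C_{\widehat\e}^{\ot(n_1,m_1)},C_{\widehat\e}^{\ot(n_2,m_2)}\big)\ \cong\ \Hom\big(C_\e^{\ot(n_1,m_1)}\circ\rho^*,\,C_\e^{\ot(n_2,m_2)}\circ\rho^*\big)\ \cong\ \Hom\big(C_\e^{\ot(n_1,m_1)},\,C_\e^{\ot(n_2,m_2)}\big),$$
where the last isomorphism is the standard fact that precomposition with an equivalence (here an isomorphism) of source categories induces an isomorphism on natural transformation complexes. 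Unwinding the definitions, this is exactly the claimed isomorphism $\Nat^\ot_{\widehat\e}(\oc{m_1}{n_1},\oc{m_2}{n_2})\cong\Nat^\ot_\e(\oc{m_1}{n_1},\oc{m_2}{n_2})$, and it is visibly a chain map since all identifications above respect differentials.

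The only genuinely delicate point — the ``main obstacle'', though it is minor — is verifying carefully that the $\Ai$--algebra structure on $\Phi(1)$ is literally unchanged when passing between $\e$ and $\widehat\e$, so that the Hochschild differentials match without any correction. This amounts to checking that the composite $\Ai\xrightarrow{i}\e\xrightarrow{\rho}\widehat\e$ is the structure map of $\widehat\e$ as a prop with $\Ai$--multiplication, which is true by the very definition $(\widehat\e,\rho\circ i)$ given in the statement; one then notes that for any $\Psi\in\Fun^\ot(\widehat\e,\Comp)$ the $\Ai$--structure $(\rho\circ i)^*\Psi$ on $\Psi(1)$ pulls back along $\rho$ to the $\Ai$--structure $i^*(\rho^*\Psi)$ on $(\rho^*\Psi)(1)=\Psi(1)$, as a formal consequence of $\rho^*$ being a functor over the identity on $\Ai$--algebras. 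Once this bookkeeping is in place, the corollary is immediate.
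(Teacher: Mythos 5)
Your proof is correct and follows essentially the same route as the paper, which deduces this corollary directly from the lemma that $\Fun^\ot(\e,\Comp)\cong\Fun^\ot(\widehat\e,\Comp)$, with the Hochschild functors matching because the $\Ai$--multiplication on $\widehat\e$ is precisely $\rho\circ i$, so that $i^*\circ\rho^*=(\rho\circ i)^*$. The paper leaves these compatibility checks implicit; you have simply written them out.
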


We are now ready to prove the main result of this section, which says that 
the injectivity and surjectivity of the restriction map $r$ is
determined by these conditions on the representation map $\rho$: 

\begin{thm}\label{compthm} Let $(\e,i)$ be a prop with $\Ai$--multiplication.  The restriction map 
$r: \Nat_\e \rar \Nat^\ot_\e$ is an isomorphism if and only if $\e$ is complete. 

More precisely, let $(\widehat\e,\rho\circ i)$ denote the completion of $\e$. Then $r$ is injective (resp.~surjective) if and only if the map $\rho:\e\to\widehat\e$
is injective (resp.~surjective).  
\end{thm}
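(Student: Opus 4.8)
The plan is to reduce the statement about general operations $\Nat_\e(\oc{m_1}{n_1},\oc{m_2}{n_2}) \to \Nat^\ot_\e(\oc{m_1}{n_1},\oc{m_2}{n_2})$ to the case $n_1 = n_2 = 0$, where $r$ becomes the representation map $\rho\colon \e(m_1,m_2) \to \widehat\e(m_1,m_2)$, and then to use the explicit model of Theorem~\ref{natural} to transport statements about $\rho$ to statements about $r$ in all bidegrees. The first reduction is essentially formal once we know, from Corollary~\ref{compiso1}, that $\Nat^\ot_\e \cong \Nat^\ot_{\widehat\e}$: if $\rho$ is an isomorphism then $\e \to \widehat\e$ is a quasi-isomorphism of categories (in fact an isomorphism), so by Corollary~\ref{Nathtpy} $\Nat_\e \cong \Nat_{\widehat\e}$; and since $\widehat\e$ is complete it suffices to treat complete props, for which I want to show $r$ is an isomorphism directly. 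So the genuine content is the ``only if'' together with the refined injectivity/surjectivity statements.

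For the refined statements I would argue degreewise using the model
$$\Nat_\e(\oc{m_1}{n_1},\oc{m_2}{n_2}) \cong \prod_{j_1,\dots,j_{n_1}\ge 1}\ \bigoplus_{k_1,\dots,k_{n_2}\ge 1}\!\! \e(j+m_1,k+m_2)[k-j+n_1-n_2]$$
from Theorem~\ref{natural}, and the analogous model for $\widehat\e$ (also from Theorem~\ref{natural}, applied to $(\widehat\e,\rho\circ i)$), together with the fact that under these identifications the restriction map $r$ is, in each summand, literally the map $\rho\colon \e(j+m_1, k+m_2) \to \widehat\e(j+m_1,k+m_2)$. This last point is the key compatibility I would need to check carefully: that the composite $\Nat_\e \xrightarrow{r} \Nat^\ot_\e \cong \Nat^\ot_{\widehat\e} \xleftarrow{\cong} \Nat_{\widehat\e}$ agrees, via the models of Theorem~\ref{natural}, with applying $\rho$ in each summand. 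Granting this, injectivity and surjectivity of $r$ pass through products and direct sums of the chain groups, so $r$ is injective (resp.\ surjective) as a map of graded modules iff $\rho\colon \e(p,q) \to \widehat\e(p,q)$ is injective (resp.\ surjective) for every $p \ge m_1$, $q \ge m_2$ of the appropriate form; and since $m_1,m_2$ range over all non-negative integers and $n_1,n_2 \ge 0$, ``$r$ injective (resp.\ surjective) for all bidegrees'' is equivalent to ``$\rho$ injective (resp.\ surjective) in all bidegrees''. Taking $n_1 = n_2 = 0$ shows this is genuinely necessary, not just sufficient.

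Finally, to complete the ``if and only if'' for ``isomorphism'': if $\e$ is complete then $\rho$ is an isomorphism in every bidegree, so by the above $r$ is a degreewise isomorphism of chain complexes, hence a chain isomorphism (it is automatically a chain map); conversely if $r$ is an isomorphism then in particular $r$ in bidegree $(\oc{m_1}{0},\oc{m_2}{0})$ is an isomorphism, i.e.\ $\rho$ is an isomorphism, i.e.\ $\e$ is complete by Definition~\ref{compdef}. The one subtlety is that I should phrase the injectivity/surjectivity claims at the level of the underlying graded modules of chain complexes and only afterwards note that a chain map which is a degreewise module isomorphism is a chain isomorphism — there is no homological subtlety here because we are comparing the operations complexes on the nose, not up to quasi-isomorphism.

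\textbf{Expected main obstacle.} The only real work is verifying the compatibility asserted in the second paragraph: that under the explicit isomorphisms of Theorem~\ref{natural} for $\e$ and for $\widehat\e$, together with the isomorphism $\Nat^\ot_\e \cong \Nat^\ot_{\widehat\e}$ of Corollary~\ref{compiso1}, the restriction map $r$ becomes ``apply $\rho$ summandwise''. This is a diagram-chase through the definitions of the action map in the Lemma preceding the proof of Theorem~\ref{natural} and the construction of $\widehat\e$ as natural transformations of forgetful functors; it is bookkeeping rather than conceptual, but it is where all the signs and identifications have to be lined up correctly, so I would isolate it as a separate lemma before assembling the final argument.
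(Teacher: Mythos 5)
There is a genuine gap, and it is a circularity. The pivot of your plan is the identification of the composite $\Nat_\e \xrightarrow{r} \Nat^\ot_\e \cong \Nat^\ot_{\widehat\e} \xleftarrow{\cong} \Nat_{\widehat\e}$ with ``apply $\rho$ summandwise'' in the models of Theorem~\ref{natural}. But the backwards arrow you decorate with $\cong$, namely $r_{\widehat\e}\colon \Nat_{\widehat\e}\to\Nat^\ot_{\widehat\e}$ being an isomorphism, is precisely the statement of Theorem~\ref{compthm} applied to the complete prop $\widehat\e$; in the paper this appears as Corollary~\ref{compcor}, which is \emph{deduced from} the theorem, not available before it. Your first-paragraph reduction (``it suffices to treat complete props, for which I want to show $r$ is an isomorphism directly'') is never cashed out: for a complete prop the second-paragraph compatibility degenerates to asserting that $r$ composed with its own inverse is the identity, which assumes what is to be proved. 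So as written the argument establishes nothing beyond the easy converse directions (which are indeed immediate, since $\rho$ is the $(\oc{m_1}{0},\oc{m_2}{0})$ instance of $r$).

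The missing idea is that one should never invoke a model for $\Nat_{\widehat\e}$ at all, but argue directly with the action of the components on genuine $\e$--algebras. For injectivity: by Theorem~\ref{natural} an element $\nu\in\Nat_\e(\oc{m_1}{n_1},\oc{m_2}{n_2})$ has components $\nu_{\uj,\uk}\in\e(j+m_1,k+m_2)$, and for a monoidal $\Phi$ the map $\nu_\Phi$ restricted to $\Phi(j+m_1)\ot L_{\uj}$ is obtained by letting each $\nu_{\uj,\uk}$ act through $\Phi$, with the $\uk$--summands kept separate; hence $r(\nu)=0$ is equivalent to $\rho(\nu_{\uj,\uk})=0$ for all $\uj,\uk$, and injectivity of $\rho$ gives $\nu=0$. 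For surjectivity: a monoidal natural transformation $\nu\in\Nat^\ot_\e$ decomposes into component maps $(\nu_\Phi)_{(\uj,\uk)}\colon\Phi(j+m_1)\rar\Phi(k+m_2)$ natural in $\Phi$, which \emph{by the very definition of the completion} are elements of $\widehat\e(j+m_1,k+m_2)$; if $\rho$ is surjective one lifts them to $\e$ and assembles a preimage in $\Nat_\e$. This uses only Theorem~\ref{natural} for $\e$ and the definition of $\widehat\e$ as natural transformations of forgetful functors, and avoids the circular appeal to $\Nat_{\widehat\e}\cong\Nat^\ot_{\widehat\e}$. Your proposal could be repaired along exactly these lines, but the repair is the substance of the paper's proof, not the bookkeeping lemma you isolated as the only obstacle.
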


In particular, we can always write the natural operations on the Hochschild complex of  $\e$--algebras in terms of the generalized natural operations
 for its completion $\widehat\e$: 

\begin{cor}\label{compcor}
Let $(\e,i)$ be a prop with $\Ai$--multiplication and $(\widehat\e,\rho\circ i)$ its completion. Then 
$$\Nat^\ot_\e(\oc{m_1}{n_1},\oc{m_2}{n_2}) \cong \Nat_{\widehat\e}(\oc{m_1}{n_1},\oc{m_2}{n_2})$$
\end{cor}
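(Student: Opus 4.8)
The plan is to deduce Corollary~\ref{compcor} by chaining together two results already in hand. First I would invoke Corollary~\ref{compiso1}, which gives the isomorphism
$$\Nat^\ot_\e(\oc{m_1}{n_1},\oc{m_2}{n_2}) \cong \Nat^\ot_{\widehat\e}(\oc{m_1}{n_1},\oc{m_2}{n_2}),$$
reducing the statement to an identification of $\Nat^\ot_{\widehat\e}$ with $\Nat_{\widehat\e}$. For this second step I would apply Theorem~\ref{compthm} to the prop with $\Ai$--multiplication $(\widehat\e,\rho\circ i)$ rather than to $(\e,i)$: the theorem states that the restriction map $r\colon \Nat_{\widehat\e}\to\Nat^\ot_{\widehat\e}$ is an isomorphism if and only if $\widehat\e$ is complete. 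Since $\widehat\e$ is the completion of $\e$, the Corollary stated just before (``The completion of a prop is complete'') tells us $\widehat\e$ is indeed complete, so $r$ is an isomorphism for $\widehat\e$, giving
$$\Nat^\ot_{\widehat\e}(\oc{m_1}{n_1},\oc{m_2}{n_2}) \cong \Nat_{\widehat\e}(\oc{m_1}{n_1},\oc{m_2}{n_2}).$$
Composing the two isomorphisms yields the claim.

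The one point that requires a small check is that $(\widehat\e,\rho\circ i)$ genuinely is a prop with $\Ai$--multiplication, so that Theorem~\ref{compthm} and the constructions feeding into it apply to it; but this is exactly the content of how the completion is set up in Corollary~\ref{compiso1}, where $\widehat\e$ is equipped with the functor $\rho\circ i\colon\Ai\to\widehat\e$, and $\rho\circ i$ is the identity on objects because both $\rho$ and $i$ are. So no new work is needed there. I would also remark, for the reader's orientation, that under these identifications the restriction map $r\colon\Nat_\e\to\Nat^\ot_\e$ factors as $\Nat_\e\xrightarrow{\rho_*}\Nat_{\widehat\e}\xrightarrow{\ \cong\ }\Nat^\ot_{\widehat\e}\xrightarrow{\ \cong\ }\Nat^\ot_\e$, which makes transparent why $r$ need not itself be an isomorphism (the first arrow $\rho_*$ is induced by $\rho\colon\e\to\widehat\e$, which is injective/surjective/a quasi-isomorphism exactly under the hypotheses discussed after Theorem~\ref{compthm}).

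There is essentially no obstacle here: the Corollary is a formal consequence of Theorem~\ref{compthm} and Corollary~\ref{compiso1} together with the observation that the completion is complete. The only thing to be careful about is bookkeeping — making sure Theorem~\ref{compthm} is being applied to the right prop (namely $\widehat\e$, not $\e$) — and that is a matter of phrasing rather than mathematical difficulty. I would therefore keep the proof to a couple of sentences, essentially: ``By Corollary~\ref{compiso1}, $\Nat^\ot_\e\cong\Nat^\ot_{\widehat\e}$. Since $\widehat\e$ is complete, Theorem~\ref{compthm} applied to $(\widehat\e,\rho\circ i)$ gives $\Nat^\ot_{\widehat\e}\cong\Nat_{\widehat\e}$. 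Combining the two isomorphisms gives the result.''
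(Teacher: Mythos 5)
Your proposal is correct and follows exactly the paper's own argument: the paper proves Corollary~\ref{compcor} by citing Corollary~\ref{compiso1} together with Theorem~\ref{compthm} applied to the completion, which is complete by the preceding corollary. Your write-up simply makes explicit the bookkeeping (that the theorem is applied to $(\widehat\e,\rho\circ i)$ and that $\widehat\e$ is complete) that the paper leaves implicit.
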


This follows from Theorem~\ref{compthm} and from Corollary~\ref{compiso1}.

\begin{proof}[Proof of Theorem~\ref{compthm}]
Let $\nu\in \Nat_\e(\oc{m_1}{n_1},\oc{m_2}{n_2})$ and suppose that $r(\nu)=0$. Then for any $\e$--algebra $\Phi\in \Fun^\ot(\e,\Comp)$, the map 
$\nu_\Phi:C^{n_1}(\Phi)(m_1)\to C^{n_2}(\Phi)(m_2)$ is the 0-map. We have 
$$\nu=(\nu_{\uj})\in \prod_{\uj}\bigoplus_{\uk}\Hom(L_{\uj},\e(j+m_1,k+m_2)\ot L_{\uk})\cong \prod_{\uj}\bigoplus_{\uk}\e(j+m_1,k+m_2)[k-j+n_1-n_2]$$ and $\nu_\Phi$ is defined 
on $\Phi(j+m_1)\ot L_{\uj}\subset C^{n_1}(\Phi)(m_1)$ as the composition
$$\xymatrix{\Phi(j+m_1)\ot L_{\uj} \ar[r]^-{\nu_{\uj}}\ar[rd]_{\nu_\Phi} & \bigoplus_{\uk} \Phi( j+m_1)\ot\e(j+m_1,k+m_2)\ot L_{\uk} \ar[d]\\
 & \bigoplus_{\uk}\Phi(k+m_2)\ot L_{\uk} 
}$$
By the above diagram, $\nu_\Phi$ acting as 0 for each $\Phi$ is equivalent to each $\nu_{\uj}(l_{\uj})_{\uk}\in  \e(j+m_1,k+m_2)$ 
acting as 0 for each $\Phi$, i.e. $\rho(\nu_{\uj})=0$ for each
$\uj$. But that implies that each $\nu_{\uj}(l_{\uj})_{\uk}=0$ when $\rho$ is injective, and hence that $\nu=0$, which proves the injectivity part of
the statement. 

\medskip

Consider now $\nu\in\Nat^\ot_\e(\oc{m_1}{n_1},\oc{m_2}{n_2})$. Then for each $\e$--algebra $\Phi$, we have a map 
$$\nu_\Phi:\bigoplus_{\uj}\Phi( j+m_1)\ot L_{\uj}\rar \bigoplus_{\uk}\Phi( k+m_2)\ot L_{\uk}. $$
This map is given by a collection of maps 
$(\nu_\Phi)_{(\uj,\uk)}:\Phi( j+m_1)\rar\Phi(k+m_2)$ which are natural in $\Phi$. In other words these maps define an element  
$(\nu)_{(\uj,\uk)}\in \widehat\e(j+m_1,k+m_2)$. If $\rho:\e\to\widehat\e$ is surjective,  this implies that $(\nu)_{(\uj,\uk)}=\rho(f)$ for some 
$f\in \e(j+m_1,k+m_2)$, and we have that $(\nu_\Phi)_{(\uj,\uk)}=\Phi(f)$ for each $\Phi$. Hence $\nu$ comes from an element in $\Nat_\e(\oc{m_1}{n_1},\oc{m_2}{n_2})$. 
\end{proof}

\medskip

The representation map $\rho:\e\to\widehat\e$ is injective exactly when 
any two morphisms in $\e$ can be distinguished by their actions on $\e$--algebras,  i.e.~if for every 
$j,k$ and $f\neq g\in\e(j,k)$, there exists a  $\Phi\in \Fun^\ot(\e,\Comp)$ such that $\Phi(f)\neq\Phi(g)$. 
This is always possible when $\e$ admits free algebras, which is the case when $\e$ is the prop associated to an operad:


\begin{ex}\label{opex1}{\rm 
Suppose $\e_{\pp}$ is the prop associated to an operad $\pp=\{\pp(n)\}_{n\ge 0}$, so
$$\e_{\pp}(p,q)=\bigoplus_{\begin{subarray}{c}
n_1+\dots+n_q=p\end{subarray}}\pp(n_1)\ot \cdots\ot \pp(n_q) \ot_{\Si_{n_1}\x\cdots\x\Si_{n_q}}\Z\Si_{p}.$$ 
Let $\widehat\e_{\pp}$ denote its completion. Then the representation map $\rho\colon \e_{\pp}\to \widehat\e_{\pp}$ is injective as any two elements
of  $\e_{\pp}$ can be distinguished by their actions on some free
algebra $\pp(X)=\bigoplus_n \pp(n)\ot_{\Si_n}X^{\ot n}$. }
\end{ex}

The representation map is however not injective in general, as shown by the following example: 

\begin{ex}{\rm  (lunch time example of Oscar Randal-Williams.) 
Let $\operatorname{Frob}$ be the prop of commutative Frobenius algebras 
(i.e.~the prop of (closed) 2-dimensional topological quantum field theories, see e.g.~\cite[1.2]{Koc04}) over a field. 
Consider the quotient $\operatorname{Frob}\!/_{(T=0)}$ of $\operatorname{Frob}$ by the relation that the operation associated to the torus equals the 0-map. 
As the torus operation in the Frobenius prop is multiplication by the dimension of the algebra (see \cite[1.2.29]{Koc04}), this means that only 
a 0-dimensional Frobenius algebra  can be an algebra over this prop.  
Hence there is only one $\operatorname{Frob}\!/_{(T=0)}$--algebra, the trivial 0-dimensional one, 
and elements in the prop cannot be distinguished by their action on algebras.  
}\end{ex}





Surjectivity of the representation map is more difficult to check. It is at least satisfied in characteristic 0 by props associated to operads
concentrated in degree 0: 

\begin{ex}[Fresse]\label{opex2}{\rm 
Let $k$ be a field of characteristic 0 and $\pp$ be an operad in $Vect_k$, the category of vector spaces over $k$. Let $\e_{\pp}$ denote its  associated prop
 as in
Example~\ref{opex1}.  
Then $\e_\pp$ is complete in $Vect_k$, i.e.~$\e_\pp(r,s)\cong\widehat\e_\pp(r,s):=\Hom((U_V)^{\ot r},(U_V)^{\ot s})$ for 
$U_V:\Fun^{\ot}(\e_{\pp},Vect_k)\to Vect_k$ the forgetful functor from $\pp$--algebras to vector spaces. Indeed, 
consider  the functors \\
\\
\begin{tabular}{rll}
$id^{\ot r}:$&$Vect_k\ \rar\ Vect_k $&$:\  X \mapsto X^{\ot r}$\\
$\pp(-)^{\ot s}:$&$Vect_k\ \rar\ Vect_k$ &$:\  X \mapsto \pp(X)^{\ot s} =\big(\bigoplus_n \pp(n)\ot_{\Si_n}X^{\ot n}\big)^{\ot s}.$
\end{tabular}\\
\\
Let $\Hom(id^{\ot r},\pp(-)^{\ot s})$ denote the natural transformations between these functors. 
There is map 
$$\e_{\pp}(r,s)=\!\!\!\!\!\bigoplus_{r_1+\dots+r_s=r}\!\!\!\pp(r_1)\ot \cdots \ot \pp(r_s) \ot_{\Si_{r_1}\x\cdots\x\Si_{r_s}}\Z\Si_{r}\ \
\sta{\eta}{\rar}\ \ \Hom(id^{\ot r},\pp(-)^{\ot s})$$ 
mapping a morphism $a=a_1\ot \cdots\ot a_s\ot \s$ 
to the natural transformation $\nu_a$ defined on a vector space 
$X$ as the composition
$$\nu_a(X):X^{\ot r}\sta{\s}{\rar} X^{\ot r}\inc (a_1\ot_{\Si_{r_1}} X^{\ot r_1})\ot\cdots\ot(a_s\ot_{\Si_{r_s}} X^{\ot r_s})\subset\pp(X)^{\ot s}$$
where the first map permutes the factors and the second is the inclusion determined by $a_1\ot \dots\ot a_s$.  
When $k$ is a field of characteristic 0, 
linearity implies that these are the only possible natural transformations from $id^{\ot r}$ to $\pp(-)^{\ot
  s}$, and hence that the map $\eta$ is an isomorphism (see \cite[App A, in particular (5.4)]{MacSymmetric}---the case
$s=1$ is stated in \cite[Prop 1.2.5]{Fre04}, see also \cite[Prop 2.3.12]{Fre09}). 
(Note that if we are not in characteristic 0, more maps are possible. For example the diagonal map $k\to k\ot_{\Si_2}k$ is linear in characteristic 2,
though not in characteristic 0.) 

There is an isomorphism $\Hom(id^{\ot r},\pp(-)^{\ot s})\cong \Hom((U_V)^{\ot r},(U_V)^{\ot s})$, which takes a natural transformation $\theta=(\theta_X)\in \Hom(id^{\ot r},\pp(-)^{\ot s})$  to the natural transformation given on a $\pp$--algebra $A$ by the map 
$$U(A)=A^{\ot r}\sta{\theta_{U(A)}}{\rar} \pp(A)^{\ot s}\sta{\ga_A^{\ot s}}{\rar} A^{\ot s}$$ for $\ga_A:\pp(A)\to A$ the structure map of $A$, and with inverse taking a natural transformation $\eta=(\eta_A)\in\Hom((U_V)^{\ot r},(U_V)^{\ot s})$  to the natural transformation defined on a vector space $X$ by 
$$X^{\ot r}\sta{i_X^{\ot r}}{\rar} \pp(X)^{\ot r}\sta{\eta_{\pp(X)}}{\rar} \pp(X)^{\ot s}$$
with $i_X:X\to id\ot X\subset \pp(X)$ the canonical inclusion.  
Now $ \Hom((U_V)^{\ot r},(U_V)^{\ot s})$ is  the completion of $\e_{\pp}$ as defined above.  
The composition of these two isomorphisms gives 
$$\e_{\pp}(r,s)\sta{\cong}{\rar} \Hom(id^{\ot r},\pp(-)^{\ot s})\cong \Hom((U_V)^{\ot r},(U_V)^{\ot s})=\widehat \e_{\pp}(r,s)$$
as claimed. 
}\end{ex}




%

\subsection{Example: the cap product}\label{capsec}

Given an algebra $A$ (i.e.~$A=\Phi(1)$ for $\Phi:\Ass\to \Comp$ a symmetric monoidal functor), 
we can consider its endomorphism prop $\End(A)$. It has morphisms from $p$ to $q$ defined as 
$$\End(A)(p,q):=\Hom(A^{\ot p},A^{\ot q})$$
where $A^{\ot p},A^{\ot q}$ are considered as objects of $\Comp$ and $\Hom$ here denotes the chain complex of morphisms in that category. 
The algebra structure of $A$ defines a symmetric monoidal functor $i:\Ai\to\End(A)$ (factoring through $\Ass$), 
so $\End(A)$ is a prop with $\Ai$--multiplication. 
The algebra $A$ is by construction an $\End(A)$--algebra: $\End(A)$ exactly encodes all the operations defined on $A$, and its associative 
algebra structure is the one obtained by pulling back its $\End(A)$--structure with $i$. 

Evaluating on the $\End(A)$--algebra $A$, one sees that  
the restriction map $$r:\Nat_{\End(A)}\rar \Nat_{\End(A)}^\ot$$ is injective. 
Explicitly,  $\Nat_{\End(A)}$  has morphism complexes
$$\Nat_{\End(A)}(\oc{m_1}{n_1},\oc{m_2}{n_2})\cong \!\!\!\!\!\!\prod_{j_1,\dots,j_{n_1}\ge 1}\ \bigoplus_{k_1,\dots,k_{n_2}\ge 1}\!\!\!\!\!\!\Hom(A^{\ot j_1+\dots+j_{n_1}+m_1},A^{\ot k_1+\dots+k_{n_2}+m_2})[k-j+n_1-n_2]$$
with differential $d_A+d_H+d^H$ where $d_H$ cyclically post-composes with multiplications and $d^H$ cyclically precomposes with multiplications. 
(Recall from Example~\ref{coex} that $d^H$ is {\em not} the Hochschild cochain differential!)

\medskip

Recall from \cite[XI.6]{CarEil56} the classical cap product in Hochschild homology: 
$$\cap\colon C_p(A,A)\ot C^q(A,A)\rar C_{p-q}(A,A).$$
Explicitly, given $a=a_0\ot\cdots\ot a_p\in C_p(A,A)$ and $D\in C^q(A,A)=\Hom(A^{\ot q},A)$, it is given by the formula
\begin{equation}\label{cap}
a\cap D=(-1)^{(|a|-|a_0|)|D|}a_0D(a_1,\dots,a_q)\ot a_{q+1}\ot\cdots\ot a_p
\end{equation}
(see e.g.~\cite[Sec 2]{NesTsy99}). This operation is constructed using the $\End(A)$--algebra structure of $A$, and is thus part of the
$\Nat_{\End(A)}$--action:

\begin{prop}\label{capprop}
There is an inclusion $F:C^*(A,A)\inc \Nat_{\End(A)}(\oc{0}{1},\oc{0}{1})$ satisfying that the composition
$$C_*(A,A)\ot C^*(A,A)\sta{id\ot F}{\inc} C_*(A,A)\ot \Nat_{\End(A)}(\oc{0}{1},\oc{0}{1}) \rar C_*(A,A)$$
 is the cap product. 
\end{prop}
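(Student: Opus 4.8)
The plan is to identify the cochain $D\in C^q(A,A)=\Hom(A^{\ot q},A)$ with a specific element of $\Nat_{\End(A)}(\oc{0}{1},\oc{0}{1})$ via the explicit model of Theorem~\ref{natural}. Recall that in the case $n_1=1=n_2$, $m_1=0=m_2$ the complex $\Nat_{\End(A)}(\oc{0}{1},\oc{0}{1})$ is, as a graded module, $\prod_{j\ge 1}\bigoplus_{k\ge 1}\Hom(A^{\ot j},A^{\ot k})[k-j]$. So first I would define $F(D)$ to be the element $\u{g}=\{g_j\}_{j\ge 1}$ whose only nonzero components are the maps $g_{j}\colon A^{\ot j}\to A^{\ot j-q+1}$ ($j\ge q$) given by applying $D$ to the first $q$ entries and multiplying the result into the $0$th slot using $m_2$; that is, $g_j(a_0\ot\cdots\ot a_{j-1}) = \pm\, a_0 D(a_1,\dots,a_q)\ot a_{q+1}\ot\cdots\ot a_{j-1}$, with the sign matching the Koszul convention of \eqref{cap}. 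This lands in $\Hom(A^{\ot j},A^{\ot j-q+1})$, which sits in degree $(j-q+1)-j+|D| = |D|-q+1+ (\text{internal degree})$; matching the conventions, $F$ raises degree the same way the Hochschild cochain differential does, so $F$ is degree-preserving after the shift.

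Next I would check that $F$ is a chain map, i.e.\ that $F(\delta D) = d(F(D))$ where $\delta$ is the Hochschild cochain differential of $C^*(A,A)$ and $d = d_A + d_H + d^H$ is the differential of $\Nat_{\End(A)}$ from Theorem~\ref{natural}. This is the one genuinely computational point, and it is exactly where the warning in Example~\ref{coex} — that $d^H$ cyclically \emph{pre}composes with multiplications and is \emph{not} the Hochschild cochain differential — has to be reconciled. The resolution is that the ``extra'' cyclic terms of $d^H$ and $d_H$ acting on $F(D)$, namely those that multiply the entry $a_0 D(a_1,\dots,a_q)$ against a neighbour, or feed $a_0$ into $D$ cyclically, are precisely the terms that combine to produce the cyclic/first and last terms of the Hochschild cochain differential $\delta D$ once one tracks that the $0$th slot plays the role of the bimodule coefficient; the internal $d_A$ piece matches on the nose. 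I would organize this as: (i) $d_A$-part is immediate; (ii) the terms of $d_H$ multiplying two factors both disjoint from the image of $D$ cancel against the corresponding terms of $\delta D$ that insert a multiplication away from $D$; (iii) the boundary terms (multiplying into or cyclically around the $D$-block, and the cyclic term pushing $a_0$ through) assemble to the remaining terms of $\delta D$; signs are handled by the Koszul bookkeeping already fixed in the formula for the differential in Theorem~\ref{natural}. Since degree-$0$ cycles in $\Nat_{\End(A)}(\oc{0}{1},\oc{0}{1})$ are exactly the chain operations $C_*(A,A)\to C_*(A,A)$, a degree-$q$ cocycle $D$ goes to a degree-$q$ cycle, i.e.\ a map lowering Hochschild degree by $q$, as required. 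Injectivity of $F$ is clear since $D$ is recovered from $g_q$ by restricting to $a_0 = 1$ (or, without units, by reading off the component landing in the appropriate summand).

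Finally, for the cap-product identity, I would unwind the action map of the Lemma preceding Theorem~\ref{natural} applied to $\Phi = A$ (i.e.\ $\Phi(n)=A^{\ot n}$ as an $\End(A)$-algebra) on the element $a = a_0\ot\cdots\ot a_p\in C_p(A,A) = \bigoplus_{j\ge1}A^{\ot j}\ot L_j$, paired with $F(D)$: the action evaluates the homomorphism $F(D)$ on the generator $l_{p+1}$, producing $\e(p+1,p+1-q)\ni g_{p+1}$, and then applies $\Phi(g_{p+1}) = g_{p+1}$ itself (since $\Phi$ is the identity on $\End(A)$-morphisms) to $a_0\ot\cdots\ot a_p$. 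By construction $g_{p+1}(a_0\ot\cdots\ot a_p) = (-1)^{(|a|-|a_0|)|D|}\, a_0 D(a_1,\dots,a_q)\ot a_{q+1}\ot\cdots\ot a_p$, which is precisely the right-hand side of \eqref{cap}. The main obstacle is step two — verifying that $F$ intertwines $\delta$ with the somewhat unfamiliar coHochschild-flavoured differential $d^H$, getting every cyclic boundary term and every Koszul sign to match; everything else is formal unwinding of the definitions already in place.
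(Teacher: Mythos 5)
Your proposal is correct and follows essentially the same route as the paper: $F(D)$ is defined componentwise by the cap-product formula $a\mapsto a\cap D$, and the action on $C_*(A,A)$ then returns $a\cap D$ by construction. The only real difference is at the chain-map step: where you plan a term-by-term matching of the $d_H$, $d^H$ and cochain-differential terms, the paper simply observes that $\big(d_H(F(D)_{j})\big)(a)=d(a\cap D)$ and $\big(d^H(F(D)_{j-1})\big)(a)=-D\cap da$, so that $F(dD)=d(F(D))$ is literally the classical identity $dD\cap a = d(a\cap D) - D\cap da$, i.e.\ the known fact that the cap product is a chain map, with no further bookkeeping required.
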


\begin{proof}
Recall that $\Nat_{\End(A)}(\oc{0}{1},\oc{0}{1})\cong \prod_{j\ge 1}\bigoplus_{k\ge 1}\Hom(A^{\ot j},A^{\ot k})[k-j]$. Given a Hochschild cochain $D\in \Hom(A^q,A)=C^q(A,A)$,
define $F(D)$ to have $j$th component the map  $F(D)_j\in \Hom(A^{\ot j},A^{\ot j-q+1})$  defined by equation~(\ref{cap}) above with $p=j-1$, i.e.~ taking
$a=a_0\ot\cdots\ot a_{j-1}$ to $a\cap D$. 
We need to check that $F(dD)=d(F(D))$. For simplicity, we assume $A$ is concentrated in degree 0 and we leave the ``non-structural'' signs out. We have 
$(d(F(D)))_j=d_H(F(D)_{j})+d^H(F(D)_{j-1})$. Now  $$\big(d_H(F(D)_{j})\big)(a)=d(a\cap D)$$ is the Hochschild differential applied to the cap
product, and $$\big(d^H(F(D)_{j-1})\big)(a)=-D\cap da.$$ 
As $\big(F(dD)\big)(a)=dD\cap a$, the equation becomes 
$$dD\cap a = d(a\cap D) - D\cap da$$
for each $a=a_0\ot \cdots\ot a_{j-1}\in A^{\ot j}$ which is exactly saying that the cap product is a chain map. 
\end{proof}

Considering $C^*(A,A)$ as an algebra with the cup product,  Nest and Tsygan 
define an action of $C_*(C^*(A,A),C^*(A,A))$ on $C_*(A,A)$ extending the cap product. This is
further extended by Kontsevich-Soibelman in   \cite[Sec 11]{KS06} to an 
action of $B(n,1)\ot C^*(A,A)$ on $C_*(A,A)$, where $B(n,1)$ is a chain complex of certain configurations on a cylinder. 
In each case, the definition of the action, just as in the case of the cap product, gives  maps 
$$C_*(C^*(A,A),C^*(A,A))\rar\ \Nat_{\End(A)}(\oc{0}{1},\oc{0}{1})$$ and  $$B(n,1)\ot C^*(A,A)\rar\ \Nat_{\End(A)}(\oc{0}{1},\oc{0}{1})$$ 
which are chain maps exactly because the actions are compatible with the differential.

\section{Three computations of the formal operations}\label{compsec}

Let $\OO$ be the symmetric monoidal dg-category with objects the natural numbers and morphisms $\OO(n,m)$ the chain
complex of fat graphs with $n+m$ labeled leaves as described in Section~\ref{GraphApp}. (See Figure~\ref{Ofig} for an example of a morphism in $\OO$.)  Composition is defined by gluing leaves and the monoidal structure by disjoint union, just as for $\Ai$. In fact, the category $\Ai^+$ identifies as a subcategory of forests in $\OO$. 
\begin{figure}[h]
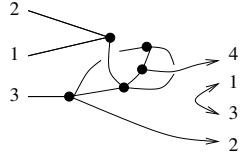

\begin{lpic}{OO(0.6,0.6)}
\end{lpic}
\caption{Morphism in $\OO(3,4)$ (with the outputs marked as outgoing arrows)}\label{Ofig}
\end{figure}
The chain complex $\OO(n,m)$ computes the homology of the moduli
space of Riemann cobordisms from $n$ to $m$ intervals, so that an $\OO$--algebra, i.e.~a strong symmetric monoidal functor $\Phi:\OO\to\Comp$, can be called an open topological 
conformal field theory (see  \cite{Pen04,Pen06,God07,costello07,Cos07} and \cite[Thm 2.2]{WahWes08}).  

Let $H_0(\OO)$ denote the dg-category with the same objects as $\OO$ but with morphisms  $H_0(\OO)(n,m):=H_0(\OO(n,m))$. Explicitly, by the above, $H_0(\OO)(n,m)$  is the chain complex
concentrated in degree 0 with one free generator for each topological type of cobordism from $n$ to $m$ intervals. A strong symmetric monoidal functor 
$\Phi:H_0(\OO)\to \Comp$, or $H_0(\OO)$--algebra, is called an open topological quantum field theory, and such field theories are in 1-1 correspondence with symmetric Frobenius algebras
(see \cite[Cor 4.5]{LauPfe}). Note that there is a quotient functor $\OO\to H_0(\OO)$.

 Including trees into all graphs defines a functor $i:\Ai^+\to\OO$, so $\OO$ and $H_0(\OO)$  are  props with unital $\Ai$--multiplication 
and open field theories have a well-defined Hochschild complex. 
In the present section, we identify the chain complexes of formal operations
$\bNat_\OO(\oc{m_1}{n_1},\oc{m_2}{n_2})$, $\bNat_{H_0(\OO)}(\oc{m_1}{n_1},\oc{m_2}{n_2})$ and $\bNat_{\Ai^+}(\oc{m_1}{n_1},\oc{m_2}{n_2})$,
 up to homotopy, with well-known objects. 

We start the section by analyzing the case of the category $\OO$. 
The cases $\Ai^+$ and $H_0(\OO)$ are then treated respectively as subcases and quotient of the case $\OO$. 

\subsection{Open topological conformal field theories}\label{OCsec}

Let $\OO$ be the open cobordism category described above and recall from Section~\ref{natsec} the chain complex
$\bNat_\OO(\oc{m_1}{n_1},\oc{m_2}{n_2})$ of formal operations on $\OO$-algebras, the hom complex of the reduced Hochschild functors
$$\bC^{(n_i,m_i)}:\Fun(\OO,\Comp)\to \Comp.$$ We will relate this complex 
to a corresponding morphism complex in the open-closed cobordism category $\OC$, which we describe now. 

\medskip

For the purpose of the present paper, we could define $\OC$ formally as a category with objects pairs of natural numbers $\oc{m}{n}$, with $m,n\ge 0$, and morphism complexes 
\begin{equation}\label{splittedOC}
\OC(\oc{m_1}{0},\oc{m_2}{n_2})\ \cong\ \bC^{n_2}(\OO(m_1,-))(m_2)=\!\!\!\!\bigoplus_{k_1,\dots,\,k_{n_2}}\!\!\OO(m_1,k+m_2)/_{U}\ot L_{k_1}\ot\cdots\ot L_{k_{n_2}},
\end{equation}
the reduced iterated Hochschild complex of $\OO(m_1,-)\colon \OO\to \Comp$ at $m_2$,  
and with $\OC(\oc{m_1}{n_1},\oc{m_2}{n_2})$ for $n_1>0$ a certain subcomplex of $\OC(\oc{m_1+n_1}{0},\oc{m_2}{n_2})$, together with a certain composition law. (Here $k=k_1+\dots+k_{n_1}$ and $L_{k_i}$ is the free module on one generator $l_{k_i}$ 
in degree $k_i-1$ of Section~\ref{sec1}.)  
The fact that it can be described this way is a major ingredient in our identification of the natural operations, but it would difficult to describe which subcomplex   $\OC(\oc{m_1}{n_1},\oc{m_2}{n_2})$ really is, or define the composition, without introducing open-closed cobordisms and black and white graphs, which we do now. 
At the same time, we will give geometric meaning to the category $\OC$. 

\medskip

An open-closed cobordism is a surface $S$ whose boundary has an incoming and outgoing part: $\del S\supset\del_{in}S\sqcup\del_{out}S$, 
where $\del_{in}S$ and $\del_{out}S$ are disjoint unions of labeled circles
and closed intervals in $\del S$, with $S$  then considered as a cobordism from $\del_{in}S$ to $\del_{out}S$. 
The part of the boundary which is neither incoming nor outgoing is called {\em free}:  $\del_{free}S=\del S\minus (\del_{in}S\cup \del_{out}S)$. 
The open-closed cobordism category $\OC$ considered here has objects pairs of natural numbers $\oc{m}{n}$, thought of as a union of $m$ intervals and $n$ circles, and
morphisms $\OC(\oc{m_1}{n_1},\oc{m_2}{n_2})$ a chain complex whose homology is that of the moduli space of Riemann cobordisms from $m_1$
intervals and $n_1$ circles to $m_2$ intervals and $n_2$ circles satisfying that each component has non-empty free or incoming boundary. 

To describe $\OC$ explicitly, we use the language of black and white graphs. A {\em black and white graph} is a fat graph whose set of vertices is given as $V=V_b\sqcup V_w$, with $V_b$ the set of {\em black} vertices and $V_w$ the set of {\em white} vertices. Black vertices are assumed to be at least trivalent whereas white vertices are allowed to be any valence at least 1. In addition, white vertices are labeled, and come equipped with the data of a start half-edge at the vertex. 
The graph $l_n$ from Section~\ref{sec1} is a black and white graph. 
Figure~\ref{bwgraph}(a) gives a more generic example. Figure~\ref{bwgraph}(b) shows how to associate a surface with boundaries to a black and white graph, fattening the graph and inserting boundary circles where there were white vertices. A {\em boundary cycle} in the graph is a sequence of half-edges in the graph corresponding to a boundary component of that surface not coming from a white vertex. 
\begin{figure}[h]
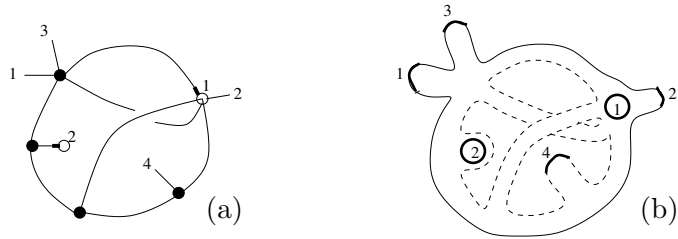

\begin{lpic}{bwgraph.3(0.52,0.52)}
\lbl[b]{55,3;(a)}
\lbl[b]{165,3;(b)}
\end{lpic}
\caption{Black and white graph and associated cobordism. 
}\label{bwgraph}
\end{figure}

Black and white graphs form a chain complex just like fat graphs, defining the degree of a graph as $\sum_{v\in V_b}(|v|-3) + \sum_{v\in V_w}(|v|-1)$, and with the differential given as sums of all possible blow-ups, a white vertex blowing up as a pair of a white and a black vertex. In fact, one can think of a black and white graph as made out of one copy of the graph $l_{|v|}$ of Section~\ref{sec1} for each white vertex $v$, attached along a fat graph, and the differential at such a white vertex $v$ is that of $l_{|v|}$---This translates to the formal description of $\OC$ given in equation (\ref{splittedOC}) above.  
We have also mentioned this correspondence earlier in  
Remark~\ref{LLgraphs} when describing the functor $\LL$ used to define the Hochschild complex. 
We refer to  \cite[Sec 2.3,2.5]{WahWes08} for more details about the chain complex of black and white
graphs. 

Now $\OC(\oc{m_1}{n_1},\oc{m_2}{n_2})$ is a chain complex 
generated by black and white graphs with $n_2$ white vertices
modeling the $n_2$ outgoing closed boundary components, 
and $n_1+m_1+m_2$ labeled leaves such that the first $n_1$ leaves each are sole leaves in their boundary cycle,  
 representing the $n_1$ closed incoming, $m_1$ open incoming and $m_2$ open outgoing boundaries. 
Composition is obtained by summing over all ways of removing  the white vertices of the first graph (its outgoing closed boundaries) and attaching their leaves to the corresponding incoming boundary cycle of the second graph, in a way that respects the cyclic ordering and preserves the grading, and finally attaching in pairs the leaves modeling open boundaries as in $\OO$. 
We refer to 
 \cite[Sec 2.8]{WahWes08} for further details about the category $\OC$, including a precise definition of the composition.
It follows from \cite[Prop 6.1.3]{costello07} that this chain complex computes the homology of the moduli space of Riemann cobordisms from $m_1$
intervals and $n_1$ circles to $m_2$ intervals and $n_2$ circles satisfying that each component has at least an incoming or a free boundary (see
\cite[Thm 5.4]{WahWes08}). The fact that composition in $\OC$ corresponds to gluing of surfaces on the moduli space is shown in \cite{Ega14A}.

\medskip

Cutting around the white vertices yields  the isomorphism 
$$\OC(\oc{m_1}{0},\oc{m_2}{n_2})\ \cong\ \bC^{n_2}(\OO(m_1,-))(m_2)=\!\!\!\!\bigoplus_{k_1,\dots,k_{n_2}}\!\!\OO(m_1,k+m_2)/_{U}\ot L_{k_1}\ot\cdots\ot L_{k_{n_2}}$$
mentioned above. (This is the content of \cite[Lem 6.1]{WahWes08}.)
In the language of \cite{WahWes08}, this means that $\OC$ is an extension of the Hochschild core category of $\OO$.
Corollary~5.12 of \cite{WahWes08} can then be rephrased as saying that $\OC$ acts naturally
on the functors $$\bC^n(-)(m):\Fun(\OO,\Comp)\rar \Comp$$
i.e.~that 
there is a functor $$J_\OO:\OC\to \overline{\Nat}_\OO.$$ To describe this functor explicitly
on morphism complexes, we first rewrite its target using the identifications from Theorem~\ref{natural} and \cite[Lem.~6.1]{WahWes08}:  
$$\bNat_\OO(\oc{m_1}{n_1},\oc{m_2}{n_2})\cong \bD^{n_1}\bC^{n_2}\OO(-,-)(m_1,m_2)\cong \bD^{n_1}(\OC(\oc{-}{0},\oc{m_2}{n_2}))(m_1)$$
which exhibits $\bNat_\OO(\oc{m_1}{n_1},\oc{m_2}{n_2})$ as a subcomplex of the product $\prod_{\uj}\OC(\oc{m_1+j}{0},\oc{m_2}{n_2})[n_1-j]$ over $\uj=(j_1,\dots,j_{n_1})$
and with  $j=j_1+\dots+j_{n_1}$ as before. 
Then the map 
$$J_{\OO}:\ \OC(\oc{m_1}{n_1},\oc{m_2}{n_2})\rar \bNat_\OO(\oc{m_1}{n_1},\oc{m_2}{n_2})\subset \prod_{\uj}\OC(\oc{m_1+j}{0},\oc{m_2}{n_2})[n_1-j]$$
can be explicitly identified as the map that   
takes a graph $G$ to the sequence $\{G\circ (l_{\uj}+ id_{m_1})\}_{\uj}$, where 
$l_{\uj}=l_{j_1}\ot\cdots\ot l_{j_{n_1}}\in L_{j_1}\ot\cdots\ot L_{j_{n_1}}$ corresponds in $\OC(\oc{j}{0},\oc{0}{n_1})$ to the black and white graph defined by the disjoint union of the graphs $l_{j_i}$,
with $\circ$ the composition in $\OC$. 
More explicitly, 
the $\uj$th component of the image of $G$ is the sum of all graphs that can be 
obtained from $G$ by gluing $l_{j_i}-1$ labeled leaves cyclically at the 
vertices of the boundary cycle of $G$ corresponding to its $i$th incoming closed boundary for each $i=1,\dots,n_1$. 
One can check that this map has image in the reduced coHochschild complex.

Note that the map $J_{\OO}$ is injective on each morphism complex. A splitting can be constructed using  
the map $ \bNat_\OO(\oc{m_1}{n_1},\oc{m_2}{n_2})\to \OC(\oc{m_1+n_1}{0},\oc{m_2}{n_2}) $ associating to a sequence $\{G_{\uj}\}_{\uj}\in\prod_{\uj}\OC(\oc{m_1+j}{0},\oc{m_2}{n_2})[n_1-j]$ its $\uj$th component for $\uj=(1,\dots,1)$. 
The categories are however far from being isomorphic. In particular, the morphisms complexes in $\bNat_{\OO}$, unlike in $\OC$, are not positively graded. 
We will however show the following result:

\begin{thm}\label{OC}
The functor $J_\OO:\OC\sta{\sim}{\inc}\bNat_\OO$ is a quasi-isomorphism of categories, which is split injective on each morphism complex.  
\end{thm}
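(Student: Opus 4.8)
The plan is to prove Theorem~\ref{OC} by reducing it, via the explicit model of Theorem~\ref{natural}, to a statement about the homology of an iterated reduced coHochschild construction, and then to compute that homology by a spectral sequence argument. The functor $J_\OO$ has already been shown to be split injective on each morphism complex; it remains to show it is a quasi-isomorphism. By Theorem~\ref{natural} and \cite[Lem.~6.1]{WahWes08} we have $\bNat_\OO(\oc{m_1}{n_1},\oc{m_2}{n_2})\cong \bD^{n_1}\bC^{n_2}\OO(-,-)(m_1,m_2)\cong \bD^{n_1}(\OC(\oc{-}{0},\oc{m_2}{n_2}))(m_1)$, so the target is built from $\OC$ by applying an $n_1$-fold reduced coHochschild construction in the open-interval variable. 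The first step is therefore to recognize $\OC(\oc{m_1}{n_1},\oc{m_2}{n_2})$ itself as sitting inside $\bD^{n_1}(\OC(\oc{-}{0},\oc{m_2}{n_2}))(m_1)$ as the subcomplex of sequences $\{G_{\uj}\}$ in the image of $J_\OO$, i.e.\ those coming from attaching $l_{\uj}$-strings to a single graph $G$ with $n_1$ incoming closed boundary cycles. So the whole statement becomes: the inclusion of ``graphs with genuine incoming closed boundary cycles'' into ``the reduced coHochschild complex of graphs where those cycles have been cut open'' is a quasi-isomorphism.

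The second step is to set up the right filtration/spectral sequence, following the pattern already used in Section~\ref{htpysec} and Section~\ref{redsec}. The reduced coHochschild construction $\bD$ applied to $\OC(\oc{-}{0},\oc{m_2}{n_2})$ has a filtration by the tuple-length $j=j_1+\dots+j_{n_1}$, with associated graded built from $\OO(-,-)/U$ — equivalently, from black-and-white graphs. The key computational input, promised in the introduction, is that the homology of the relevant iterated coHochschild complex can be identified with a chain complex associated to a cosimplicial set of partitions, and that (by the result postponed to Section~\ref{cosimpsec}) the homology of the canonical chain complex of a cosimplicial set is concentrated in degree $0$. Concretely: fixing the underlying fat graph ``skeleton'' away from the $n_1$ cut boundary cycles, the $\bD$-construction along each cut boundary cycle is the coHochschild complex of the representable at that cycle, whose homology — by the cosimplicial-partition computation — is concentrated in the top degree and is exactly spanned by the uncut class, i.e.\ by the graph with a genuine white-free incoming boundary cycle. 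Running this over all $n_1$ cut cycles and over all graph types gives that $H_*(\bD^{n_1}\bC^{n_2}\OO(-,-)(m_1,m_2))$ is exactly $H_*(\OC(\oc{m_1}{n_1},\oc{m_2}{n_2}))$, and that the iso is realized by $J_\OO$.

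The third step is bookkeeping: checking that the spectral sequence converges (the filtration is exhaustive and complete, exactly as in Section~\ref{htpysec}, so the Eilenberg--Moore Comparison Theorem \cite[Thm 5.5.11]{WeiHomAlg} applies), that $J_\OO$ is compatible with the filtrations and induces on $E^1$ (or $E^2$) the identification just described, and that the result is an isomorphism of the abutments, hence $J_\OO$ is a quasi-isomorphism. Finally, since composition in $\OC$ corresponds to composition of natural transformations in $\bNat_\OO$ (this is Corollary~5.12 of \cite{WahWes08}), $J_\OO$ is a functor, so it is a quasi-isomorphism of categories.

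\textbf{Main obstacle.} The heart of the argument, and the place I expect the real work to be, is the identification of the homology of the iterated reduced coHochschild construction on black-and-white graphs with a complex of partitions, and the verification that this matches $\OC$ precisely — including getting the degree shifts $[n_1-j]$ and the reducedness (working with $\bD$ and $\bC$ rather than $D$ and $C$, and with $U$ quotiented out) to line up so that the surviving class is exactly the uncut graph. The cosimplicial-set input reduces the abstract computation to ``homology concentrated in degree~0'', but organizing the filtration so that the $E^1$-page really is the canonical complex of a cosimplicial set of partitions — uniformly over all graph types and all $n_1$ boundary cycles simultaneously — is the delicate combinatorial-homological step, and is presumably where the bulk of Section~\ref{compsec} (together with Section~\ref{cosimpsec}) is spent.
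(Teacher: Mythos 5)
Your plan follows the paper's proof in its essentials: reduce via Theorem~\ref{natural} to identifying $\bNat_\OO(\oc{m_1}{n_1},\oc{m_2}{n_2})$ with an iterated (reduced) coHochschild construction on $\OC(\oc{-}{0},\oc{m_2}{n_2})$, run the arity filtration of Section~\ref{htpysec}, identify the $d^1$-differential with the chain complex of a cosimplicial set of configurations whose homology is concentrated in degree $0$ by Proposition~\ref{cosimpcor} (with the surviving classes exactly the single-point-alone-on-a-circle configurations, i.e.\ the uncut incoming boundaries), and conclude with the Eilenberg--Moore comparison theorem, handling the reduced/unreduced discrepancy by Proposition~\ref{conormalization}. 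The one substantive point where your plan stops at a statement is precisely the paper's Lemma~\ref{OCK}: the $E^1$-page does not split ``by fixing the fat-graph skeleton'' for free, since the coHochschild differential genuinely adds leaves to the graphs, so the complex with $q$ extra open inputs is not literally (graphs)$\,\ot\,$(configurations). The paper obtains the splitting only up to quasi-isomorphism, via the leaf-forgetting chain map $\beta_q$ onto $\bigoplus_S\OC_S\ot\Z\CPart_c$, and the homological input making $\beta_q$ a quasi-isomorphism is the mapping-class-group fact that fixing any number of marked points in a boundary component is equivalent to fixing that boundary component; none of this appears in your sketch, and it is where the geometric interpretation of $\OC$ as moduli of cobordisms is actually used. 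Note also that the paper sidesteps your worry about treating all $n_1$ circles ``simultaneously'': it proves a one-circle statement (Lemma~\ref{OOn1}) and iterates it $n_1$ times using homotopy invariance of the coHochschild construction (Proposition~\ref{htpy}). So your route is the paper's route, and the remaining work is exactly the splitting lemma you flag as the main obstacle.
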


The main ingredient in the proof is a description of  the complex $\bD^{n_1}(\OC(\oc{-}{0},\oc{m_2}{n_2}))(m_1)$ in terms of a cosimplicial set of partitions, 
which we define first. 

Let $X$ be an oriented 1-manifold without boundary (i.e.~a collection of circles and open intervals). 
Let $\CPart(q,X)=\pi_0\operatorname{Conf}(q,X)$ denote the components of the configuration
space of $q$ ordered points in $X$. These can be thought of as partitions of $\{1,\dots,q\}$ which are ordered on the interval
components of $X$ and cyclically ordered on the circle components. 

\begin{lem}\label{CPart}
For each oriented 1-manifold $X$ without boundary, the sets $\{\CPart(q+1,X)\}_{q\ge 0}$ form a cosimplicial set. 
\end{lem}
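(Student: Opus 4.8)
The plan is to exhibit the cosimplicial structure maps explicitly and check the cosimplicial identities, using that $\CPart(q,X)$ records \emph{ordered} (resp.\ cyclically ordered) partitions of a finite set indexing configuration points on the components of $X$.

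\medskip

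\textbf{Setting up the coface and codegeneracy maps.} Think of an element of $\CPart(q+1,X)$ as a way of placing the ordered labels $0,1,\dots,q$ into the components of $X$: on an interval component the labels landing there acquire a linear order, on a circle component they acquire a cyclic order, and two labels in the same component \emph{may} coincide (sit at the same point) only if — no, in fact points of a configuration are distinct, so labels are never equal; what varies is only their relative position, together with the assignment of labels to components. Concretely $\CPart(q+1,X)$ is the set of functions from $\{0,\dots,q\}$ to the set of components of $X$ together with, for each component, a total order (interval) or cyclic order (circle) of its fibre. The coface map $\delta^i\colon \CPart(q+1,X)\to\CPart(q+2,X)$, for $0\le i\le q+1$, is \emph{label doubling}: relabel $0,\dots,q$ as $0,\dots,i-1,i+1,\dots,q+1$, then insert a new label $i$ immediately after the old label $i-1$ in whatever component (and position) $i-1$ occupies — equivalently, for $i=0$, immediately before label $1$; one checks this is well defined on $\pi_0$ because "insert immediately next to a given point" is a canonical operation on an ordered or cyclically ordered finite configuration. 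The codegeneracy $\sigma^j\colon\CPart(q+2,X)\to\CPart(q+1,X)$, for $0\le j\le q$, is \emph{label merging/forgetting}: delete the point labeled $j+1$ and relabel $j+2,\dots,q+1$ down by one; this is again a continuous, hence $\pi_0$-defined, operation. (These are precisely the structure maps making $q\mapsto\CPart(q+1,X)$ the nerve-type cosimplicial set attached to configuration spaces, exactly as one gets for the circle in ordinary Hochschild/cyclic theory.)

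\medskip

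\textbf{Checking the cosimplicial identities.} It remains to verify
\[
\delta^j\delta^i=\delta^i\delta^{j-1}\ (i<j),\qquad
\sigma^j\sigma^i=\sigma^i\sigma^{j+1}\ (i\le j),
\]
together with the three mixed relations $\sigma^j\delta^i=\delta^i\sigma^{j-1}$ ($i<j$), $\sigma^j\delta^i=\mathrm{id}$ ($i=j,j+1$), $\sigma^j\delta^i=\delta^{i-1}\sigma^j$ ($i>j+1$). Each of these is a purely combinatorial statement about inserting-next-to and deleting labels in a linearly or cyclically ordered finite set, and they hold for the same bookkeeping reason the simplicial identities hold for the standard simplicial circle; I would simply note that inserting a duplicate of label $i-1$ and then of label $j-1$ produces the same ordered/cyclically ordered configuration as performing the two insertions in the other order after the index shift, and similarly for deletions, and for the mixed cases one observes that deleting a label that was just freshly inserted recovers the original configuration. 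Since the orderings involved are either linear or cyclic, and insertion "immediately after a specified element" and deletion are canonical in both cases, the verification is uniform across the two types of components of $X$.

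\medskip

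\textbf{Expected main obstacle.} There is no deep obstacle; the only place to be careful is the bookkeeping at the \emph{cyclic} components and at the boundary index $i=0$ of $\delta^0$, where "immediately after label $i-1$" has to be read cyclically (or as "immediately before label $1$" when the component is an interval with $0$ at its start). The one genuinely substantive point — which I would state but not belabor — is that all of these operations descend to $\pi_0$: "insert a point immediately adjacent to a given configuration point" and "delete a configuration point" induce well-defined maps on path components of configuration spaces, so that $\delta^i$ and $\sigma^j$ are honestly defined on the sets $\CPart(q+1,X)=\pi_0\operatorname{Conf}(q+1,X)$. Granting that, the cosimplicial identities are immediate, and the lemma follows. \qed
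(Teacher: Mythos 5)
Your structure maps do not satisfy the cosimplicial identities: there is an off-by-one error in the cofaces relative to the codegeneracies, and the ``wrap-around'' coface is missing. Concretely, your $\delta^i$ (for $i\ge 1$) splits the point labeled $i-1$ into an adjacent pair $(i-1,i)$, while your $\sigma^j$ deletes the point labeled $j+1$; these do not match. Test the required identity $\sigma^1\delta^1=\mathrm{id}$ on the element $x\in\CPart(2,X)$ given by two points on an \emph{interval} component with the point labeled $1$ preceding the point labeled $0$: applying $\delta^1$ gives (in the order along the component) $2,0,1$, and then $\sigma^1$ deletes the point labeled $2$ (the old point $1$), yielding the ordering $0,1\neq x$. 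Similarly $\sigma^0\delta^2=\delta^1\sigma^0$ fails on the same $x$. Moreover your $i=0$ case is not a new map at all: inserting a new label $0$ immediately before the (relabeled) old point $0$ produces, on $\pi_0$, exactly the same element as $\delta^1$, namely the old point $0$ split into an adjacent pair $(0,1)$; so your family of cofaces repeats one map and omits the genuinely cyclic one. The verification you ``would simply note'' is precisely where this would have surfaced.

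The correct maps (those of the paper) are: $d^i$ for $0\le i\le q$ doubles the point labeled $i$, placing the copy, which receives label $i+1$, immediately \emph{after} it, while the extra coface $d^{q+1}$ doubles the point labeled $0$, placing the copy, which receives the \emph{last} label $q+1$, immediately \emph{before} it; the codegeneracy $s^j$ forgets the point labeled $j+1$. With this indexing all five families of identities hold (the paper checks them case by case, the only delicate cases being those involving $d^{q+1}$). The distinction is not cosmetic: with the correct maps, a single point $y$ on an interval component has $d^0y\neq d^1y$ (orderings $(0,1)$ versus $(1,0)$), whereas $d^0y=d^1y$ exactly when $y$ lies alone on a circle component; your maps would give $d^0=d^1$ identically. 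This asymmetry is exactly what is used later, via Proposition~\ref{cosimpcor}, in the spectral sequence argument of Lemma~\ref{OOn1}, so beyond failing the identities, your version of the structure would also not serve the purpose the lemma is put to.
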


\begin{proof}
Let $(p_0,\dots,p_q)\in X^{q+1}$ represent an element in $\CPart(q+1,X)$. 
Define $$d^i:\CPart(q+1,X)\to \CPart(q+2,X)$$ by 
adding a copy $\tilde p_i$ of $p_i$ to the right of $p_i$ for $0\le i\le q$, and by adding a copy $\hat p_0$ of $p_0$ to the left
of $p_0$ for $i=q+1$, to get a new configuration represented by 
$(p_0,\dots,p_i,\tilde p_i,\dots,p_q)$ (resp. $(p_0,\dots,p_q,\hat p_0)$). 
Define also  
$$s^i:\CPart(q+1,X)\to \CPart(q,X)$$
forgetting $p_{i+1}$, $0\le i\le q-1$, and relabeling respecting the order. 

We are left to check the cosimplicial identities:
$$\begin{array}{lcll}
d^jd^i&=&d^id^{j-1}&i<j\\
s^jd^i&=&d^is^{j-1}& i<j\\
s^jd^j&=&1=s^jd^{j+1}& \\
s^jd^i&=&d^{i-1}s^j& i>j+1\\
s^js^i&=&s^is^{j+1}& i\le j
\end{array}$$ 
For the first identity, if $j\le q+1$, both compositions repeat $p_i$ and $p_{j-1}$ to their
right and if $j=q+2$, both compositions repeat $p_i$ and $p_0$, one to the right, the other one to the left, or they repeat
twice $p_0$ to its left if $i=q+1$. 
For the second, both compositions double $p_i$ to its right and forget $p_j$. For the third, the first composition adds a copy of $p_j$
and forgets the added point, while the second composition adds a copy of $p_{j+1}$ and forget the original $p_{j+1}$ if $j<q$, or, when $j=q$, it
adds a copy of $p_0$ to its right and forgets it. For the fourth equality, both compositions forget $p_{j+1}$ and add $p_i$ to its
right (if $i\le q$) or $p_0$ to its left (if $i=q+1$). Finally in the last one, both compositions forget $p_{i+1}$ and $p_{j+2}$.  
\end{proof}

There is a chain complex $\oplus_q\Z \CPart(q+1,X)$ associated to the cosimplicial set 
$\CPart(\bullet+1,X)$, with $\Z \CPart(q+1,X)$ in degree $-q$, and with differential the alternating
sum of the coface maps  
$$d_K=\sum (-1)^i d^i:\Z \CPart(q,X)\to \Z \CPart(q+1,X)$$ 
raising the degree by one. The appendix Section~\ref{cosimpsec} studies the properties of such chain complexes. In particular,  
Proposition~\ref{cosimpcor} says that such a chain complex has homology concentrated in degree 0. 

Let $\CPart_c(q,X)$ denote the subset of $\CPart(q,X)$ of {\em complete configurations}, those with at least one point in each circle 
component of $X$. 
Note that the coface maps $d^i$ restrict to the subsets $\CPart_c(q,X)$. In fact, for $x\in \CPart(q,X)$ with $d_K(x)\neq 0$, we have 
$d_K(x)\in \Z\CPart_c(q+1,X)$ if and only if 
$x\in \CPart_c(q,X)$, so that  the subcomplex 
$(\Z \CPart_c(*,X),d_K)$ splits off as a union of components of the chain complex $(\Z \CPart(*,X),d_K)$.
We reinterpret $(\Z \CPart_c(*,X),d_K)$ as the coHochschild complex of a functor $\Z \CPart_c(-,X)$ as follows: 
with $\Ass$ denoting the prop of (non-unital) associative algebras, define
$$\Z \CPart_c(-,X): (\Ass)^{op} \rar \Comp$$
on objects by $\Z \CPart_c(-,X)(n)=\Z \CPart_c(n,X)$  concentrated in degree 0,  and on morphisms 
by doubling and relabeling points in the configurations according to the morphisms in $\Ass$. 
As $\Z \CPart_c(-,X)(n)$  is concentrated in degree 0 for each $n$, we have that 
its coHochschild complex is really a direct sum complex, with $\Z \CPart_c(n,X)$ in degree $1-n$. 
In fact we have the following: 

\begin{lem}\label{ZK} For any oriented 1-manifold without boundary $X$, we have  an isomorphism    
$(\Z \CPart_c(*,X),d_K)\cong D(\Z \CPart_c(-,X))(0)$ between the chain complex of the cosimplicial set of complete configurations in $X$, and the coHochschild complex
of the associated functor. 
\end{lem}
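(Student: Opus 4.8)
The plan is to exhibit both complexes as having the same underlying graded module and, after rescaling each summand by a sign, the same differential. By Definition~\ref{HcHDef}, $D(\Z\CPart_c(-,X))(0)\cong\prod_{n\ge1}\Hom\big(L_n,\Z\CPart_c(n,X)\big)$; since $L_n$ is free on a single generator $l_n$ in degree $n-1$ and each $\Z\CPart_c(n,X)$ sits in degree $0$, the $n$th factor is the group $\Z\CPart_c(n,X)$ placed in degree $1-n$, and since in each fixed homological degree only one value of $n$ occurs the product is in fact a direct sum. This agrees term by term with the chain complex $\bigoplus_q\Z\CPart_c(q+1,X)$, in which $\Z\CPart_c(q+1,X)$ lies in degree $-q=1-(q+1)$. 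So both complexes have underlying module $\bigoplus_{n\ge1}\Z\CPart_c(n,X)$ with $\Z\CPart_c(n,X)$ in degree $1-n$, and I take as candidate isomorphism $\varphi$ the identity on each summand, twisted by a sign $\varepsilon_n\in\{\pm1\}$ to be fixed later.

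To compare differentials, recall that $\Ass$ carries the $\Ai$-multiplication $i\colon\Ai\to\Ass$ sending $m_2$ to the product and $m_k$ to $0$ for $k\ge3$, so $i(f_{n,k})=0$ unless $k=n-1$, with $f_{n,n-1}=\sum_{i=1}^{n}m^i_{n,n-1}$ and $m^i_{n,n-1}$ the signed operation multiplying entries $i,i+1$ modulo $n$. Since $\Z\CPart_c(-,X)$ is concentrated in degree $0$, the term $d_\Psi$ in the coHochschild differential vanishes, and the formula of Definition~\ref{HcHDef} reduces the differential on the $(n-1)$st summand to the map $(-1)^n\sum_{i=1}^{n}\Z\CPart_c(m^i_{n,n-1},X)\colon\Z\CPart_c(n-1,X)\to\Z\CPart_c(n,X)$. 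By the definition of $\Z\CPart_c(-,X)$ on morphisms — doubling and relabeling points — the operator $\Z\CPart_c(m^i_{n,n-1},X)$ doubles the relevant configuration point, and this is exactly one of the coface maps of the cosimplicial set $\{\CPart(q+1,X)\}_{q\ge0}$ of Lemma~\ref{CPart}, restricted to complete configurations: for $1\le i\le n-1$ it is $d^{i-1}$, while the cyclic term $i=n$, multiplying the last and first entries, is the remaining coface $d^{n-1}$ adding a copy of $p_0$ on the left. Thus the coHochschild differential on the $(n-1)$st summand is $(-1)^n\sum_{j=0}^{n-1}\pm d^j$, and $d_K$ there is $\sum_{j=0}^{n-1}(-1)^j d^j$; choosing $\varepsilon_n$ appropriately (a triangular sign, corrected by the orientation signs on the $m^i_{n,n-1}$) makes $\varphi$ commute with the differentials, and being bijective on each summand it is then an isomorphism of chain complexes.

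The hard part is precisely this sign matching: the orientation signs that the $m^i_{n,n-1}$ receive in the graph differential of $\LL$ come from graph orientations and are not transparent, and one must check that their dependence on $i$ is exactly $(-1)^i$ (including for the wrap-around term) so that a single per-degree twist $\varepsilon_n$ suffices — e.g.\ $\varepsilon_n=(-1)^{n(n-1)/2}$ up to the $m^i$-orientation correction. A clean way to organise this is to note that for any degree-$0$ functor $\Psi\colon\Ass^{op}\to\Comp$ the complex $D(\Psi)(0)$ is, up to such signs, the unnormalised cochain complex of the cosimplicial abelian group $q\mapsto\Psi(q+1)$ with cofaces the $\Ass^{op}$-structure maps — the same pattern as the classical coHochschild complex of a coalgebra concentrated in degree $0$ recalled earlier — and then to apply Lemma~\ref{CPart}, which identifies this cosimplicial abelian group, for $\Psi=\Z\CPart_c(-,X)$, with the linearisation of the cosimplicial set $\CPart_c(\bullet+1,X)$; that the ``doubling and relabeling'' maps are the cofaces $d^i$ is exactly the content of the computation in the proof of Lemma~\ref{CPart}.
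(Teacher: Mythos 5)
Your proposal is correct and follows essentially the argument the paper intends: the paper states Lemma~\ref{ZK} without an explicit proof, having already observed just before it that $D(\Z\CPart_c(-,X))(0)$ is a direct sum with $\Z\CPart_c(n,X)$ in degree $1-n$ and that the functor acts by doubling and relabeling points, so the lemma is exactly the definitional unwinding you carry out. Your identification of the surviving terms $m^i_{n,n-1}$ with the cofaces of $\CPart_c(\bullet+1,X)$ and the per-degree sign twist (whose validity is confirmed by the paper's own remark computing $D(\Psi)(0)$ for a coalgebra concentrated in degree $0$, where the signs follow the alternating pattern up to a global $(-1)^d$ per degree) just makes explicit what the paper leaves implicit.
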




For each $n_1,n_2,m_1,m_2$, we consider the functor 
$$\OC(\oc{m_1+-}{n_1},\oc{m_2}{n_2}):\ \OO^{op} \rar \Comp$$
taking the value $\OC(\oc{m_1+q}{n_1},\oc{m_2}{n_2})$ on the object $q$ in $\OO$, and defined on morphisms by precomposition in $\OC$ 
via the inclusion of $\OO$ in $\OC(\oc{m_1+-}{n_1},\oc{m_1+-}{n_1})$ tensoring  with the 
identity on $\oc{m_1}{n_1}$.  

We give now a model for the restriction of $\OC(\oc{m_1+-}{n_1},\oc{m_2}{n_2})$ to $\Ai^{op}$ which splits out the configuration information defined by the
incoming and outgoing boundary components of a cobordism: 
Let $$(\OC\!\ot\! \CPart)(\oc{m_1}{n_1},\oc{m_2}{n_2}):\ (\Ass)^{op} \rar \Comp$$ be defined on the object $q$ in $\Ass$ by 
$$(\OC\!\ot\! \CPart)(\oc{m_1}{n_1},\oc{m_2}{n_2})(q)=\bigoplus_{b=0}^{q}\bigoplus_S \OC_S(\oc{m_1}{n_1+b},\oc{m_2}{n_2})\ot \Z\CPart_c(q,\textstyle{\coprod}_b S^1 \cup \del_IS) $$
where $S$ runs over the components of $\OC(\oc{m_1}{n_1+b},\oc{m_2}{n_2})$, and $\OC_S$ denotes the corresponding component, 
and $\del_IS$ denotes the interval components of the free boundary of $S$. 
The differential is the differential in $\OC$.  On morphisms, $\Ass$ acts via its action on the second factor. 

We can pull back the above two functors via $i:\Ai\to\OO$ and $j:\Ai\to \Ass$ respectively to functors defined on $(\Ai)^{op}$. 

\begin{lem}\label{OCK}
There is a quasi-isomorphism
of functors $$i^*\OC(\oc{m_1+-}{n_1},\oc{m_2}{n_2})\arsim j^*(\OC\!\ot\! \CPart)(\oc{m_1}{n_1},\oc{m_2}{n_2}):(\Ai)^{op} \rar \Comp.$$
\end{lem}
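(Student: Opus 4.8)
The plan is to construct an explicit natural transformation
$$\Theta\colon i^*\OC(\oc{m_1+-}{n_1},\oc{m_2}{n_2})\ \rar\ j^*(\OC\!\ot\!\CPart)(\oc{m_1}{n_1},\oc{m_2}{n_2})$$
of functors on $(\Ai)^{op}$ and to show it is a pointwise quasi-isomorphism. On a black-and-white graph $G$ generating $\OC(\oc{m_1+q}{n_1},\oc{m_2}{n_2})$ the map is defined by collapsing the $A_\infty$-data sitting at the $q$ distinguished open incoming leaves (the ones corresponding to the functor variable). These leaves lie on various boundary cycles of $G$; the maximal black subforest of $G$ all of whose leaves are distinguished is a morphism of $\Ai$, and we apply to it the canonical map $\Ai\to\Ass$ to the prop of associative algebras. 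Concretely this records, for each boundary cycle $c$ carrying distinguished leaves, the partition of those leaves into the blocks lying under a common tree of the subforest together with the order of the blocks along $c$ (linear if $c$ is an interval cycle, cyclic if $c$ carries only distinguished leaves) --- that is, a point of $\CPart_c$ --- and leaves as remaining data a graph $G_0$ with $q$ fewer leaves, i.e. a generator of $\OC_S(\oc{m_1}{n_1+b},\oc{m_2}{n_2})$, where $b$ is the number of boundary cycles of $G$ carrying only distinguished leaves, now reinterpreted as new incoming circles. Since $\Ai\to\Ass$ vanishes in positive degrees, $\Theta$ automatically kills any $G$ whose collapsible subforest has positive degree; this is what makes $\Theta$ degree-preserving and, as we check next, is forced by $(\Ai)^{op}$-naturality.

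First I would verify that $\Theta$ is a chain map and is $(\Ai)^{op}$-equivariant. Equivariance with respect to the generator $m_2$ is the statement that precomposing the action on a distinguished leaf by the binary product corresponds to doubling the associated configuration point, which is exactly the $(\Ass)^{op}$-action defining $\Z\CPart_c(-,X)$; equivariance with respect to $m_k$ for $k\ge 3$ holds because $j(m_k)=0$, and symmetrically because any graph produced by the $m_k$-action contains a positive-degree collapsible subtree (the glued-in $(k\!+\!1)$-valent vertex) and is therefore sent to $0$ by $\Theta$. That $\Theta$ is a chain map follows from the fact that the differential of $\OC$ splits, relative to the decomposition ``collapsible subforest plus rest'', into the associahedral differential of the subforest --- which dies under $\Ai\to\Ass$ since $\Ass$ is concentrated in degree $0$ --- the differential of $\OC_S$, and the boundary terms at the attaching vertices, which correspond to the coface relations of $\CPart$ (Lemma~\ref{CPart}).

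Next I would prove that $\Theta$ is a quasi-isomorphism by a filtration argument. Filter both functors, at fixed $q$, by the homeomorphism type of the underlying cobordism $S$ (equivalently, by the image of $G$ in $\pi_0\OC$ after collapsing the distinguished leaves), and secondarily by the numbers of distinguished leaves on the individual boundary cycles of $S$; both the differential and the $(\Ai)^{op}$-action are triangular for this filtration. On the associated graded the left-hand side becomes, for each $S$ and each assignment of the $q$ leaves to boundary cycles of $S$, a tensor product of one complex per affected cycle: for an interval cycle, the complex of planar black trees on that many leaves --- a chain complex of associahedra, hence contractible with homology $\Z$ in degree $0$; for a purely-distinguished cycle, the analogous cyclic complex, which by Lemma~\ref{ZK} is the chain complex of the cosimplicial set of complete partitions of the circle and so, by Proposition~\ref{cosimpcor}, has homology concentrated in degree $0$ equal to $\Z\CPart_c$. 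Matching these degree-$0$ groups with the right-hand side, and checking the identification is compatible with the residual ($\OC_S$-)differential and with the $(\Ai)^{op}=(\Ass)^{op}$-action (true by construction), shows $\Theta$ induces an isomorphism on homology.

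The main obstacle is precisely the bookkeeping in the first two steps: arranging the collapse $\Theta$ so that it is at once strictly natural in $(\Ai)^{op}$ and a strict chain map, which amounts to choosing a coherent transfer of the contractible associahedral (and cyclic-associahedral) pieces at the distinguished leaves down to their degree-$0$ homology, compatibly with the gluing in $\OC$ and with several boundary cycles simultaneously --- the model $\CPart_c$ being engineered exactly to carry this transfer. The homological input is the contractibility of the associahedra together with Proposition~\ref{cosimpcor}; the geometric content underlying the identifications --- that shrinking the open incoming intervals of a Riemann cobordism to marked points is a homotopy equivalence onto the corresponding moduli space --- is the point where one must argue carefully with the fat-graph models of \cite{WahWes08}.
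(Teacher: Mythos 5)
Your map $\Theta$ is essentially the paper's map $\beta_q$ (forget the distinguished leaves when this preserves the degree, record their configuration on the boundary, send everything else to $0$), and your verification that it is a chain map and $(\Ai)^{op}$--natural runs along the same lines as the paper's. The gap is in the quasi-isomorphism step. The filtration you propose --- by the topological type of the underlying cobordism and by the distribution of the distinguished leaves over its boundary cycles --- is actually \emph{preserved} by the differential of $\OC$ (blow-ups change neither the surface type nor the boundary cycle a leaf lies on), so your ``associated graded'' is nothing but the splitting of $\OC(\oc{m_1+q}{n_1},\oc{m_2}{n_2})$ into its components; and a component does \emph{not} decompose as a tensor product of $\OC_S(\oc{m_1}{n_1+b},\oc{m_2}{n_2})$ with associahedral or cyclic complexes attached to the affected boundary cycles. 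The distinguished leaves need not sit on trees hanging off a well-defined core: they can be attached directly at black or white vertices carrying the rest of the structure of the graph (your ``maximal black subforest all of whose leaves are distinguished'' can be empty even when $q>0$), and the differential mixes the positions of the $p_i$'s with the core, so no such splitting exists at the chain level or on the associated graded of the filtration you name. Moreover, the homological input you invoke is the wrong one for this lemma: Lemma~\ref{ZK} and Proposition~\ref{cosimpcor} concern the cosimplicial/coHochschild differential, which only enters after applying $D$, i.e.\ in Lemma~\ref{OOn1}; in Lemma~\ref{OCK} the cosimplicial degree $q$ is fixed and the factor $\Z\CPart_c(q,\coprod_b S^1\cup\del_I S)$ carries no differential at all, so there is nothing there for Proposition~\ref{cosimpcor} to compute.

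What the lemma actually requires, component by component, is that forgetting marked points and intervals on the boundary does not change the homology of the corresponding graph complex. The paper obtains this from the identification of the components of $\OC$ with moduli spaces of cobordisms (\cite[Thm 5.4]{WahWes08}, going back to Costello), together with the fact that the mapping class group of a surface fixing finitely many marked points in a boundary component coincides with the mapping class group fixing that boundary; your closing remark shows you sense that this is the real geometric content, but the filtration argument you give does not supply it. It is instructive to compare with the quotient case of Sullivan diagrams, Lemma~\ref{SDK}, where an explicit section $\gamma_q$ and chain homotopy do exist thanks to the defining relation of $\SD$; no such combinatorial contraction is available in $\OC$ itself, which is exactly why the paper switches to the topological mapping-class-group argument at this point. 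To repair your proof you would either have to import that topological input, or produce (and prove the acyclicity of the columns of) a genuinely finer filtration than the one you describe --- and that is the crux of the lemma, not bookkeeping.
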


\begin{proof}
We first define for each $q$ a map  
$$\beta_q: \OC(\oc{m_1+q}{n_1},\oc{m_2}{n_2})\rar \bigoplus_{b=0}^{q} \bigoplus_S\OC_S(\oc{m_1}{n_1+b},\oc{m_2}{n_2}) \ot \CPart_c(q,\textstyle{\coprod}_b S^1 \cup \del_IS).$$
 Recall that a closed incoming boundary component is modeled in $\OC$ by a leaf alone in its boundary cycle in the graph. 
A graph $G\in \OC(\oc{m_1+q}{n_1},\oc{m_2}{n_2})$ is a graph with $n_2$ white vertices, $q$ leaves $p_1,\dots,p_q$ and $n_1+m_1+m_2$ leaves $\la_1,\dots,\la_{n_1+m_1+m_2}$, with 
the first $n_1$ leaves alone in their boundary cycle. Let $b$ be the number of boundary cycle of $G$ having leaves only of type $p_i$. Define $\be_q(G)=G'\ot P$, 
where $G'$ is obtained from $G$ by forgetting the leaves $p_1,\dots,p_q$ (and their attaching trees) except for $p_{i_1},\dots,p_{i_b}$, the first occurring leaf of that type in each boundary cycle of $G$ 
with only leaves labeled by 
$p_i$'s, provided that $G$ and $G'$ have the same degree, i.e.~provided that the forgotten leaves were attached at trivalent black vertices or univalent white vertices. 
Otherwise we set $\be_q(G)=0$. 
Let $S$ be the topological cobordism associated to $G'$. 
The second component, $P$, remembers the configuration defined by the leaves $p_i$ in $\del S$. This configuration is supported in $\coprod_bS^1\cup\del_IS$. 
(See Figure~\ref{configsplit} for an example.)
\begin{figure}[h]
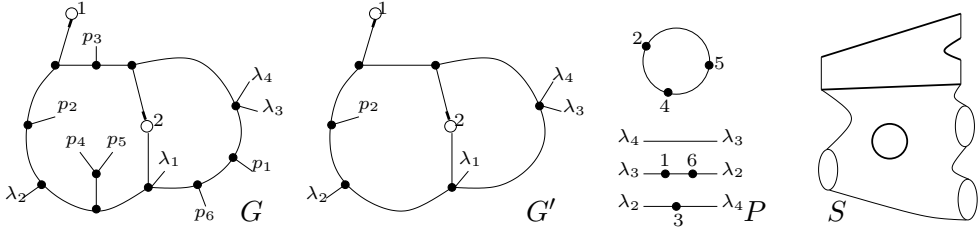

\begin{lpic}{configsplit-eps-converted-to(0.36,0.36)}
\lbl[b]{84,0;$G$}
\lbl[b]{190,0;$G'$}
\lbl[b]{268,0;$P$}
\lbl[b]{298,0;$S$}
\lbl[b]{17,40;{\tiny $p_2$}}
\lbl[b]{20,27;{\tiny $p_4$}}
\lbl[b]{35,27;{\tiny $p_5$}}
\lbl[b]{-2,7;{\tiny $\la_2$}}
\lbl[b]{26,65;{\tiny $p_3$}}
\lbl[b]{54,20;{\tiny $\la_1$}}
\lbl[b]{88,51;{\tiny $\la_4$}}
\lbl[b]{92,39;{\tiny $\la_3$}}
\lbl[b]{88,18;{\tiny $p_1$}}
\lbl[b]{67,0;{\tiny $p_6$}}
\lbl[b]{22,76;{\tiny $1$}}
\lbl[b]{50,35;{\tiny $2$}}
\lbl[b]{127,40;{\tiny $p_2$}}
\lbl[b]{108,7;{\tiny $\la_2$}}
\lbl[b]{164,20;{\tiny $\la_1$}}
\lbl[b]{198,51;{\tiny $\la_4$}}
\lbl[b]{202,39;{\tiny $\la_3$}}
\lbl[b]{132,76;{\tiny $1$}}
\lbl[b]{160,35;{\tiny $2$}}
\lbl[b]{225,65;{\tiny $2$}}
\lbl[b]{235,40;{\tiny $4$}}
\lbl[b]{255,56;{\tiny $5$}}
\lbl[b]{235,20;{\tiny $1$}}
\lbl[b]{245,20;{\tiny $6$}}
\lbl[b]{240,-2;{\tiny $3$}}
\lbl[b]{222,28;{\tiny $\la_4$}}
\lbl[b]{260,28;{\tiny $\la_3$}}
\lbl[b]{222,16;{\tiny $\la_3$}}
\lbl[b]{260,16;{\tiny $\la_2$}}
\lbl[b]{222,4;{\tiny $\la_2$}}
\lbl[b]{260,4;{\tiny $\la_4$}}
\end{lpic}
\caption{Graph $G$ with $n_1=1$, $n_2=2$, $m_1=1$, $m_2=2$ and $q=6$, and its image $G'\ot P$ under $\beta_6$, with $S$ the underlying cobordism type of $G'$, where the thicker lines show 
the support of $P$.}\label{configsplit}
\end{figure}

We check that $\be_q$ is a chain map, i.e. that it commutes with the differential in $\OC$.  
If $\be_q(G)=G'\ot P$ with $G'\neq 0$, then $\be_q(d_{\OC}G)=d_{\OC}G'\ot P$  as the forgotten leaves where attached at trivalent black vertices or univalent white vertices in that case. 
On the other hand, if $\be_q(G)=0$, then each component of $d_{\OC}G$ will also be mapped to 0 by $\be_q$, unless 
$G$ has a single valence 4 black vertex or valence 2 white vertex with a $p_i$ forgotten by $\beta_q$. 
In this case, there are two terms in $d_{\OC}G$ on which $\be_q$ does not vanish, but their images will be isomorphic and with opposite orientations. 
Hence each $\be_q$ is a chain map. 

Furthermore, the $\be_q$'s assemble to a natural transformation of functors: for $f\in(\Ai)^{op}$,  $\be(f(G))=j(f)\be(G)$ as the image of a graph $G$
under positive degree morphisms of $\Ai$
are killed by $\be$, and the action of degree 0 morphisms on graphs corresponds exactly to the action on $\CPart_c$ via the projection $j:\Ai\to\Ass$. 

\smallskip

We are left to show that each $\be_q$ is a quasi-isomorphism.  
Note first that the above map is an isomorphism on sets of components as there is exactly one topological type of cobordism on the left hand side corresponding to each pair $(S,P)$ on the 
right hand side. 


Finally, the homology groups on the left hand side only depend on $b$, not on the particular 
partition in $\CPart_c(\bullet+1,\del_IS\cup \del_{[b]}S)$ as the 
mapping class group of a surface fixing any number of points in a boundary component is isomorphic to the mapping class group fixing the whole boundary. 
\end{proof}

We are now ready to prove our second lemma, which, iterated $n_1$ times, will prove the theorem. 

\begin{lem}\label{OOn1}
There is a quasi-isomorphism
of functors 
$$\OC(\oc{-}{n_1+1},\oc{m_2}{n_2})\arsim D(\OC(\oc{-}{n_1},\oc{m_2}{n_2}))(-):\ \OO^{op}\rar \Comp$$
defined at the object $m_1$ by taking a graph $G$ to $\{G\circ (l_j+id_{m_1})\}_{j\ge 1}$, 
using the identification $D(\OC(\oc{-}{n_1},\oc{m_2}{n_2}))(m_1)\cong\prod_{j\ge 1}\OC(\oc{m_1+j}{n_1},\oc{m_2}{n_2})[1-j]$. 
\end{lem}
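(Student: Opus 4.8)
The plan is to reduce the statement, via the explicit model provided by Lemma~\ref{OCK}, to the corresponding statement purely about the cosimplicial set of partitions, i.e.\ to Proposition~\ref{cosimpcor} combined with Lemma~\ref{ZK}. First I would verify that the proposed formula $G\mapsto \{G\circ(l_j+id_{m_1})\}_{j\ge 1}$ does land in the coHochschild complex $D(\OC(\oc{-}{n_1},\oc{m_2}{n_2}))(m_1)$ and defines a chain map: the coHochschild differential precomposes cyclically with the maps $f_{j,k}=\sum m^i_{n-k+1}$ at the new white vertex, and since composition in $\OC$ is associative and the graphs $l_j$ carry exactly the $\LL$-differential at their white vertex (Remark~\ref{LLgraphs}), the identity $d(G\circ(l_j+id))=dG\circ(l_j+id)\pm G\circ(d^L l_j + id)$ holds; the first term is the $\OO$-part of the differential and the second reproduces the coHochschild differential on the right-hand side. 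That this is a natural transformation of functors $\OO^{op}\to\Comp$ is immediate from associativity of composition in $\OC$ with morphisms of $\OO$ acting on the left input.

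The substantive point is that this chain map is a quasi-isomorphism at each object $m_1$. Here I would restrict along $i:\Ai\to\OO$ and apply Lemma~\ref{OCK} to both sides: the left-hand side $i^*\OC(\oc{m_1+-}{n_1+1},\oc{m_2}{n_2})$ is quasi-isomorphic to $j^*(\OC\!\ot\!\CPart)(\oc{m_1}{n_1+1},\oc{m_2}{n_2})$, while the right-hand side, being $D$ applied to $\OC(\oc{-}{n_1},\oc{m_2}{n_2})$, should be identified — using the compatibility of $D$ with the splitting of Lemma~\ref{OCK} and Lemma~\ref{ZK}, which expresses the cosimplicial partition complex as a coHochschild complex — with $D\big(j^*(\OC\!\ot\!\CPart)(\oc{m_1}{n_1},\oc{m_2}{n_2})\big)(m_1)$. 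Under these identifications the map becomes, component by component over the components $S$ of $\OC(\oc{m_1}{n_1+b},\oc{m_2}{n_2})$, essentially the map that adds one more configuration point freely on the new boundary circle; what must be shown is that $\OC_S\ot \Z\CPart_c(-,\coprod_{b+1}S^1\cup\del_IS)$ computes, after taking the $D$-construction, the same homology as $\OC_S\ot \Z\CPart_c(-,\coprod_b S^1\cup\del_IS)$ with an extra circle incorporated into the $\CPart$ factor. Since passing from $b$ to $b+1$ incoming closed boundaries does not change the homology of the relevant moduli space (the mapping class group fixing points on a boundary circle agreeing with the one fixing the whole circle, exactly as invoked in the proof of Lemma~\ref{OCK}), and since $D$ of the constant-in-homology $\OC_S$ factor tensored with $\Z\CPart_c$ is computed by Lemma~\ref{ZK} and Proposition~\ref{cosimpcor} to be concentrated in the expected degree, the two sides agree.

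I would organize the argument so that the homotopy-invariance of $D$ (Proposition~\ref{htpy}) does the bookkeeping: replace $i^*\OC(\oc{m_1+-}{n_1},\oc{m_2}{n_2})$ by the quasi-isomorphic $j^*(\OC\!\ot\!\CPart)(\oc{m_1}{n_1},\oc{m_2}{n_2})$ before applying $D$, so that $D$ of the original and $D$ of the model are quasi-isomorphic; then match $\OC(\oc{-}{n_1+1},\oc{m_2}{n_2})$ with $D$ of the model by a direct graph-level computation, checking that cutting around the $(n_1+1)$st white vertex of a graph in $\OC(\oc{m_1+q}{n_1+1},\oc{m_2}{n_2})$ reproduces exactly a term $\OC_S(\oc{m_1}{n_1+b},\oc{m_2}{n_2})\ot l_j$ sitting inside the $D$-construction of the $(\OC\!\ot\!\CPart)$-model. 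The main obstacle I anticipate is precisely this identification of the right-hand side $D(\OC(\oc{-}{n_1},\oc{m_2}{n_2}))(m_1)$ with $D$ of the $(\OC\!\ot\!\CPart)$-model in a way compatible with the map — i.e.\ checking that ``add a white vertex and cut around it'' corresponds under Lemma~\ref{OCK} to ``add one circle's worth of configuration points'' — together with the attendant sign/orientation bookkeeping coming from the black-and-white graph orientations, which (as the authors warn elsewhere) is the least transparent part. Everything else is a combination of associativity of composition in $\OC$, the already-established Lemmas~\ref{CPart}, \ref{ZK}, \ref{OCK}, and the degreewise-concentration Proposition~\ref{cosimpcor}.
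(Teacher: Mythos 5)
Your overall route is the paper's: keep the left-hand side as it is, and use Lemma~\ref{OCK} together with the homotopy invariance of $D$ (Proposition~\ref{htpy}) to replace $D(\OC(\oc{-}{n_1},\oc{m_2}{n_2}))(m_1)\cong D(\OC(\oc{m_1+-}{n_1},\oc{m_2}{n_2}))(0)$ by $D$ of the $(\OC\!\ot\!\CPart)$--model, and then feed in Lemma~\ref{ZK} and Proposition~\ref{cosimpcor}. (Applying Lemma~\ref{OCK} ``to both sides'' is not needed and not really meaningful for the source at a fixed object: at $q=0$ there are no configuration points, so the lemma only enters inside the coHochschild construction.) The chain-map and naturality checks at the start are fine.

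The genuine gap is at the decisive computation. Your stated reason that ``the two sides agree'' --- that passing from $b$ to $b+1$ incoming closed boundaries does not change the homology because the mapping class group fixing points on a boundary circle agrees with the one fixing the whole circle --- is not the operative mechanism; that fact is already consumed inside the proof of Lemma~\ref{OCK} and says nothing about why, after applying $D$, only the $b=1$ summands survive. What actually happens is this: $D$ of the model is an infinite product, so one filters it by the filtration of Section~\ref{htpysec} (complete and exhaustive), identifies the $E^1$-page as $\bigoplus_{b}\bigoplus_S H_q(\OC_S(\oc{m_1}{n_1+b},\oc{m_2}{n_2}))\ot\Z\CPart_c(p+1,\coprod_b S^1\cup\del_I S)$ with $d^1$ the cosimplicial differential via Lemma~\ref{ZK}, and then Proposition~\ref{cosimpcor} says each row has homology concentrated in degree $0$ with one summand for every single-point configuration $x$ satisfying $d^0x=d^1x$, i.e.\ a point alone on a circle component; such a configuration exists, and is unique, exactly when $b=1$. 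This count is what collapses the whole sum over $b$ to $E^2_{0,q}=H_q(\OC(\oc{m_1}{n_1+1},\oc{m_2}{n_2}))$ and kills $b=0$ and $b\ge 2$; your proposal never performs it, and ``concentrated in the expected degree'' does not substitute for it. Moreover, the convergence and the compatibility with the map (the very point you flag as your ``main obstacle'') are handled by the Eilenberg--Moore comparison theorem \cite[Thm 5.5.11]{WeiHomAlg}, applied to the map with the trivial filtration on $\OC(\oc{m_1}{n_1+1},\oc{m_2}{n_2})$: the map induces an isomorphism on $E^2$-pages, hence on homology. Without the filtration, the $b$-count, and the comparison theorem, the concluding ``the two sides agree'' is an assertion, not a proof.
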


(The map does actually have image in the quasi-isomorphic reduced subcomplex.)

\begin{proof}
Note first that the map is indeed a natural transformation as $\OO$ acts by composition on boundaries that are not affected by the glued $l_j$'s. 

We are left to check that it induces isomorphisms in homology pointwise. 
We have $D(\OC(\oc{-}{n_1},\oc{m_2}{n_2}))(m_1)\cong D(\OC(\oc{m_1+-}{n_1},\oc{m_2}{n_2}))(0)$. 
 By Lemma~\ref{OCK} and the homotopy invariance of the coHochschild construction (Proposition~\ref{htpy}), we thus get 
$$D(\OC(\oc{-}{n_1},\oc{m_2}{n_2}))(m_1)\simeq D((\OC\!\ot\! \CPart)(\oc{m_1}{n_1},\oc{m_2}{n_2}))(0).$$
Recall from Section~\ref{htpysec} the filtration
$$F^s=\prod_{q\ge s}\bigoplus_{b=0}^{q}\bigoplus_S \OC_S(\oc{m_1}{n_1+b},\oc{m_2}{n_2})\ot
\Z\CPart_c(q,\textstyle{\coprod}_b S^1 \cup \del_IS)$$
which is exhaustive and complete.
 Its $E^1$-term has 
$$E^1_{-p,q}=\bigoplus_{b=0}^{p+1}\bigoplus_S H_q(\OC_S(\oc{m_1}{n_1+b},\oc{m_2}{n_2}))\ot \Z\CPart_c(p+1,\textstyle{\coprod}_b S^1 \cup \del_IS)$$
and the $d^1$-differential is the coHochschild differential, which, by Lemma~\ref{ZK},  can be identified with the differential of the  
chain complex associated to the cosimplicial set $\CPart(\bullet,\textstyle{\coprod}_b S^1 \cup \del_IS)$, restricted to the split subcomplex $\Z\CPart_c$. 

By Proposition~\ref{cosimpcor}, 
the homology of this complex is concentrated in degree $0$, with a summand for each element $x$ in $\CPart_c(1,\textstyle{\coprod}_b S^1 \cup \del_IS)$ 
satisfying that $d^0x=d^1x$. Such an $x$ is a configuration of a single element $p_0$, and $d^0,d^1$ double it as $(p_0,p_1)$ and
$(p_1,p_0)$. These two configurations are in the same component exactly when $p_0$ is alone on its boundary component. There is exactly one such 
configuration in $\CPart_c(1,\textstyle{\coprod}_b S^1 \cup \del_IS)$ when $b=1$ and none if $b\neq 1$. Hence the $E^2$-term is given as $E^2_{-p,q}=0$ for $p>0$ and 
$$E^2_{0,q}=H_q(\OC(\oc{m_1}{n_1+1},\oc{m_2}{n_2})).$$
Now using the Eilenberg-Moore comparison theorem (\cite[Thm 5.5.11]{WeiHomAlg}) for the trivial filtration 
of $H_*(\OC(\oc{m_1}{n_1+1},\oc{m_2}{n_2}))$ given by  $F^1= H_*(\OC(\oc{m_1}{n_1+1},\oc{m_2}{n_2}))$, $F^s=0$ for $s>1$, the map in the statement of the corollary gives an isomorphism on the $E^2$-pages, and hence an isomorphism
$ H_*(\OC(\oc{m_1}{n_1+1},\oc{m_2}{n_2}))\sta{\cong}{\rar}H_*(D((\OC\!\ot\! \CPart)(\oc{-}{n_1},\oc{m_2}{n_2}))(m_1))$.
\end{proof}

\begin{proof}[Proof of Theorem~\ref{OC}]
Applying Lemma~\ref{OOn1} $n_1$ times together with the homotopy invariance of $D$ (Proposition~\ref{htpy}), we get a sequence of quasi-isomorphisms
$$\OC(\oc{m_1}{n_1},\oc{m_2}{n_2})\arsim D(\OC(\oc{-}{n_1-1},\oc{m_2}{n_2}))(m_1)\arsim\,\cdots\,\arsim D^{n_1}(\OC(\oc{-}{0},\oc{m_2}{n_2}))(m_1)$$
whose composition is the map given in the statement of the theorem. As that map has image in the subcomplex 
$\bNat_{\OO}(\OC(\oc{m_1}{n_1},\oc{m_2}{n_2}))=\bD^{n_1}(\OC(\oc{-}{0},\oc{m_2}{n_2}))(m_1)$, 
which is quasi-isomorphic to $D^{n_1}(\OC(\oc{-}{0},\oc{m_2}{n_2}))(m_1)$ by Proposition~\ref{conormalization}, we get the desired quasi-isomorphism. 
\end{proof}

\begin{rem}{\rm 
There is a space-level version of the above algebraic phenomenon. Given a surface $S$ with $n+m$ boundary components $\del_n S\cup
\del_m S$, with $m\ge 1$, there is a cosimplicial space $\mathcal{K}^\bullet(S,n)$ with 
$$\mathcal{K}^q(S,n):=\coprod_{p_0,\dots,p_k\in \del_mS} B\Ga(S;\del_n S\cup \{p_0,\dots,p_k\})$$
where $\Ga(S;\del_n S\cup \{p_0,\dots,p_k\})$ denotes the mapping class group of $S$ fixing the first $n$ boundary components of $S$
pointwise 
as well as fixing each of the marked points $p_0,\dots,p_k$ lying in the last $m$ boundary components of $S$. 
The coface and codegeneracy maps  are defined as for the cosimplicial set $\CPart(\bullet,X)$ by doubling and forgetting marked
points. One can then check that $Tot(\mathcal{K}^\bullet(S,n))\cong B\Ga(S;\del_{n+1} S)$. 
}\end{rem}

\subsection{Unital $\Ai$-algebras}

We want to restrict the argument of the previous section for the open cobordism category $\OO$ to its subcategory $\Ai^+$. 
Recall from \cite[Prop 6.12]{WahWes08} that 
$$\bC^{n_2}(\Ai^+(m_1,-)(m_2)\cong \bigoplus_A \OC_A(\oc{m_1}{0},\oc{m_2}{n_2})$$
is a sum of components of $\OC$, 
where $A$ runs over the cobordisms which are a union of $m_2$ discs, each with exactly one open outgoing boundary, and $n_2$ annuli, each with
precisely one closed outgoing boundary, and with a total of $m_1$ incoming open boundaries distributed on the free boundaries of the discs and
annuli. 
In particular, we can now identify 
$$\bNat_{\Ai^+}(\oc{m_1}{n_1},\oc{m_2}{n_2})\cong \bD^{n_1}(\bigoplus_A\OC_A(\oc{-}{0},\oc{m_2}{n_2}))(m_1)$$
which exhibits $\bNat_\OO(\oc{m_1}{n_1},\oc{m_2}{n_2})$ as a subcomplex of $\prod_{j_1,\dots,j_{n_1}}\!\!\bigoplus_A\OC_A(\oc{m_1+j}{0},\oc{m_2}{n_2})[n_1-j]$. 

Let $\Ann$ denote the subcategory of $\OC$ with the same objects, with 
$$\Ann(\oc{m_1}{n_1},\oc{m_2}{n_2})\cong \bigoplus_{A'} \OC_{A'}(\oc{m_1}{n_1},\oc{m_2}{n_2})$$
where $\A'$ runs over cobordisms which are a union of $m_2$ discs and $n_2-n_1$ annuli as before, 
union with $n_1$ annuli each with one incoming closed and one outgoing closed
boundary components. This chain complex can thus also be identified with
$\bigoplus_A \OC_A(\oc{m_1}{0},\oc{m_2}{n_2-n_1})\ot\OC_{S^1\x I}(\oc{0}{1},\oc{0}{1})^{\ot n_1}$, with $A$ as above. 
Restriction of the functor $J_\OO$ of the previous section of $\Ann$ yields a commutative diagram: 
$$\xymatrix{\Ann(\oc{m_1}{n_1},\oc{m_2}{n_2})\ar[r]^-{J_{\!\Ai^+}} \ar@{^(->}[d]& \bNat_{\Ai^+}(\oc{m_1}{n_1},\oc{m_2}{n_2}) \ar@{^(->}[d]\ar@{^(->}[rr] && 
  \prod_{\uj}\bigoplus_{A}\OC_A(\oc{m_1+j}{0},\oc{m_2}{n_2})[n_1-j] \ar@{^(->}[d]\\
\OO(\oc{m_1}{n_1},\oc{m_2}{n_2})\ar[r]^-{J_\OO}& \bNat_{\OO}(\oc{m_1}{n_1},\oc{m_2}{n_2})\ar@{^(->}[rr] &&  
  \prod_{\uj}\OC(\oc{m_1+j}{0},\oc{m_2}{n_2})[n_1-j]
}$$  
where $J_{\Ai^+}$, the restriction of $J_\OO$ to $\Ann$, 
takes a graph $G$ to the sequence $\{G\circ (l_{\uj}+id_{m_1})\}_{\uj}$, where $G\circ (l_{\uj}+id_{m_1})$ is obtained from $G$ by replacing its $i$th incoming closed
boundary by $j_i$ cyclically ordered 
incoming open boundaries for each $0\le i\le n_1$. 

The proof of Theorem~\ref{OC} restricts to the annuli components arising above to prove that $J_{\Ai^+}$ is also a quasi-isomorphism:  

\begin{thm}\label{Ai}
 The functor $J_{\Ai^+}:\Ann\to\bNat_{\Ai^+}$ is a quasi-isomorphism of categories. 
\end{thm}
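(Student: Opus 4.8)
The plan is to mimic the proof of Theorem~\ref{OC}, but restricted throughout to the sub-collection of components of $\OC$ that appear in $\bNat_{\Ai^+}$, namely those indexed by cobordisms $A$ that are disjoint unions of discs (with one open outgoing boundary each) and annuli (each with one closed outgoing boundary), together, after the coHochschild construction, with annuli having one closed incoming and one closed outgoing boundary. The first step is to observe that the maps $\beta_q$ of Lemma~\ref{OCK}, the splitting of Lemma~\ref{OOn1}, and the identifications of Lemma~\ref{ZK} and Lemma~\ref{CPart} all respect the decomposition of $\OC$ into components, in the sense that they send the components indexed by such $A$ to the components indexed by such $A$ (the map $\beta_q$ only ever forgets $p_i$-leaves, so it cannot change the topological type of a surface except by forgetting marked points, hence stays within discs-and-annuli; gluing $l_j$'s at a closed boundary of an annulus produces again an annulus-type surface after the relevant identification). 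So Lemma~\ref{OOn1} has an immediate annuli-restricted version: there is a quasi-isomorphism of functors $\OO^{op}\to\Comp$
$$\Ann(\oc{-}{n_1+1},\oc{m_2}{n_2})\arsim D(\Ann(\oc{-}{n_1},\oc{m_2}{n_2}))(-)$$
given by the same formula $G\mapsto\{G\circ(l_j+id_{m_1})\}_{j\ge 1}$, because the $E^1$-, $E^2$-pages of the spectral-sequence argument in the proof of Lemma~\ref{OOn1} split according to the components of $\OC$, and the degree-$0$ summand that survives corresponds precisely to the annulus with one extra closed incoming boundary.

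Given this, the proof of Theorem~\ref{Ai} is the word-for-word analogue of the proof of Theorem~\ref{OC}: iterate the restricted Lemma~\ref{OOn1} $n_1$ times, using homotopy invariance of $D$ (Proposition~\ref{htpy}) at each stage, to obtain a chain of quasi-isomorphisms
$$\Ann(\oc{m_1}{n_1},\oc{m_2}{n_2})\arsim D(\Ann(\oc{-}{n_1-1},\oc{m_2}{n_2}))(m_1)\arsim\cdots\arsim D^{n_1}(\Ann(\oc{-}{0},\oc{m_2}{n_2}))(m_1),$$
whose composite is exactly $J_{\Ai^+}$ followed by the inclusion into the unreduced complex. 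Since $\Ann(\oc{-}{0},\oc{m_2}{n_2})\cong\bigoplus_A\OC_A(\oc{-}{0},\oc{m_2}{n_2})=\bC^{n_2}(\Ai^+(-,-))(m_2)$ by \cite[Prop 6.12]{WahWes08}, the target $D^{n_1}(\Ann(\oc{-}{0},\oc{m_2}{n_2}))(m_1)$ is quasi-isomorphic, via the iterated reduction map of Proposition~\ref{conormalization}, to $\bD^{n_1}\bigl(\bigoplus_A\OC_A(\oc{-}{0},\oc{m_2}{n_2})\bigr)(m_1)\cong\bNat_{\Ai^+}(\oc{m_1}{n_1},\oc{m_2}{n_2})$ by the identification recorded just before the statement. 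One then checks, exactly as in the $\OO$ case, that $J_{\Ai^+}$ itself already lands in this reduced subcomplex, so the composite quasi-isomorphism factors through $J_{\Ai^+}$, forcing $J_{\Ai^+}$ to be a quasi-isomorphism. Compatibility with composition is inherited from the commutativity of the diagram displayed before the statement of the theorem, since $J_{\Ai^+}$ is by definition the restriction of the functor $J_\OO$.

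The main obstacle, and really the only thing requiring genuine checking rather than bookkeeping, is verifying that the spectral-sequence computation in Lemma~\ref{OOn1} restricts cleanly to the annuli components --- i.e.\ that passing to the sub-collection of disc-and-annulus components commutes with taking the filtration $F^s$, its $E^1$-page, and the identification of the $d^1$-differential with the cosimplicial differential on $\Z\CPart_c$. This hinges on the fact, already implicit in the proofs of Lemmas~\ref{OCK} and \ref{OOn1}, that the component-decomposition of $\OC$ is preserved by all the structure maps in play (the coface maps of $\CPart$ act by doubling marked points on a fixed surface, and $\beta_q$ never changes the homeomorphism type of the underlying surface beyond forgetting marked points), so that $\Z\CPart_c(q,\coprod_b S^1\cup\del_I S)$ with $S$ of disc-and-annulus type is exactly the piece of the full complex indexed by such $S$; Proposition~\ref{cosimpcor} then applies verbatim to each such piece. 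Once this is granted, nothing new happens: every invocation of Proposition~\ref{htpy}, Proposition~\ref{conormalization}, \cite[Thm 5.5.11]{WeiHomAlg}, and \cite[Prop 6.12]{WahWes08} goes through on the restricted complexes unchanged.
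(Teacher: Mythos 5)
Your proposal is correct and follows essentially the same route as the paper, which proves Theorem~\ref{Ai} precisely by observing that the proof of Theorem~\ref{OC} restricts to the disc-and-annuli components (the paper simply asserts this restriction, while you spell out the component-preservation checks for $\beta_q$, the spectral sequence, and the surviving $b=1$ summand). No gaps.
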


Recall from \cite[Prop 6.14]{WahWes08} that $H_*(\Ann)$ is generated as a symmetric monoidal category by the unit and multiplication of the algebra, 
the $\De$-operator representing 
Connes-Rinehart's $B$ operator on the Hochschild complex of an associative algebra, and the inclusion of the algebra in its Hochschild complex.



\subsection{Strict Frobenius algebras}\label{SDsec}

Let $H_0(\OO)$ be the dg-category obtained from $\OO$ by taking its 0th homology, as described at the beginning of the section, whose algebras are symmetric Frobenius algebras.  
We will describe the formal operations on $H_0(\OO)$--algebras in terms of Sullivan diagrams. 

Following \cite{WahWes08}, we define here {\em Sullivan diagrams} as equivalence classes of
black and white graphs having only trivalent black vertices, where the equivalence relation is generated by  the boundary of graphs with a single  valence 4 black
vertex.  By \cite[Thm 2.6]{WahWes08}, this is equivalent to the more classical definition, which, loosely speaking, defines Sullivan diagrams 
 as equivalence classes of fat graphs build from a number of circles by attaching chords, where the chords are
thought of as having length 0. (Figure~\ref{tgmg} shows examples of a few Sullivan diagrams both represented as black and white graphs and as ``classical''
Sullivan diagrams. The white vertices in the black and white graphs description correspond to the circles in the classical picture.) Sullivan diagrams model the unimodular harmonic compactification of moduli space \cite{EgaKup}. 

The category $\SD$ of Sullivan diagram can then be directly defined as the
quotient category of the open-closed category $\OC$ by the graphs with higher valence black vertices, and by the boundaries of such. (See \cite[Sec 2.10]{WahWes08} for a less concise description of that category.)

Lemma 6.6 of \cite{WahWes08} shows that, cutting around the white vertices gives an isomorphism  
$$\SD(\oc{m_1}{0},\oc{m_2}{n_2})\ \cong\ \bC^{n_2}\big(H_0(\OO)(m_1,-)\big)(m_2)$$
which says, in the language of \cite{WahWes08}, that $\SD$ is an extension of the Hochschild core category of $H_0(\OO)$. 
Hence by \cite[Cor 5.12]{WahWes08}, $\SD$ acts naturally on the functors $\bC^{(n,m)}:\Fun(H_0(\OO),\Comp)\to\Comp$, that is  
there is a functor
$$J_{H_0}:\SD \rar \bNat_{H_0(\OO)}$$
which is an inclusion on morphism complexes just as in the case of the prop $\OO$. 
We prove here that this functor is a quasi-isomorphism: 

\begin{thm}\label{H0OC}
The functor $J_{H_0}:\SD\sta{\sim}{\inc}\Nat_{H_0(\OO)}$ is a quasi-isomorphism of categories, which is split-injective on each morphism complex. 
\end{thm}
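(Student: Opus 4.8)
The plan is to mimic the proof of Theorem~\ref{OC}, passing through the quotient $\OC \to \SD$ at each stage. The key observation is that $H_0(\OO)$ plays the same role for $\SD$ that $\OO$ plays for $\OC$: by Lemma 6.6 of \cite{WahWes08} the complex $\SD(\oc{m_1}{0},\oc{m_2}{n_2})$ is the reduced iterated Hochschild complex $\bC^{n_2}(H_0(\OO)(m_1,-))(m_2)$, so $\SD$ is an extension of the Hochschild core category of $H_0(\OO)$, and Theorem~\ref{natural} (together with \cite[Lem 6.6]{WahWes08}) identifies
$$\bNat_{H_0(\OO)}(\oc{m_1}{n_1},\oc{m_2}{n_2}) \cong \bD^{n_1}\bC^{n_2}H_0(\OO)(-,-)(m_1,m_2) \cong \bD^{n_1}\big(\SD(\oc{-}{0},\oc{m_2}{n_2})\big)(m_1).$$
Thus $J_{H_0}$ is, up to the reduced-versus-unreduced comparison of Proposition~\ref{conormalization}, the map $G \mapsto \{G \circ (l_{\uj}+\mathrm{id}_{m_1})\}_{\uj}$ into $\prod_{\uj}\SD(\oc{m_1+j}{0},\oc{m_2}{n_2})[n_1-j]$, exactly as for $\OO$. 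Split injectivity is proved verbatim as in the $\OO$ case, taking $\uj=(1,\dots,1)$.

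To prove it is a quasi-isomorphism, I would prove the $\SD$-analogue of Lemma~\ref{OOn1}: for each $n_1,n_2,m_1,m_2$ there is a quasi-isomorphism of functors $\OO^{op}\to\Comp$
$$\SD(\oc{-}{n_1+1},\oc{m_2}{n_2}) \arsim D\big(\SD(\oc{-}{n_1},\oc{m_2}{n_2})\big)(-)$$
given by $G \mapsto \{G \circ (l_j + \mathrm{id}_{m_1})\}_{j\ge 1}$, and then iterate $n_1$ times using the homotopy invariance of $D$ (Proposition~\ref{htpy}) and convert to the reduced complex via Proposition~\ref{conormalization}. For the one-step statement one follows the same route as in Lemma~\ref{OOn1}: first prove an $\SD$-version of Lemma~\ref{OCK} splitting off the configuration data, namely a quasi-isomorphism $i^*\SD(\oc{m_1+-}{n_1},\oc{m_2}{n_2}) \arsim j^*(\SD \ot \CPart)(\oc{m_1}{n_1},\oc{m_2}{n_2})$ of functors $(\Ai)^{op}\to\Comp$, where the right-hand side is built exactly as $(\OC\ot\CPart)$ but with $\SD_S$-components in place of $\OC_S$-components. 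The map $\beta_q$ forgetting all but the first $p_i$ in each all-$p_i$ boundary cycle is defined the same way and is again a chain map; the proof that it is a quasi-isomorphism is actually identical, because the only input used was that the homology depends only on the number $b$ of closed-incoming boundary cycles and not on the partition, which holds for the moduli-type object computed by $\SD$ (the harmonic compactification) for the same mapping-class-group reason—marking extra points in a boundary component does not change the relevant homotopy type. Then, running the filtration spectral sequence of Section~\ref{htpysec}, the $d^1$-differential is the coHochschild differential, identified by Lemma~\ref{ZK} with the differential of the cosimplicial set $\CPart(\bullet,\coprod_b S^1\cup\del_I S)$ restricted to $\Z\CPart_c$; by Proposition~\ref{cosimpcor} its homology is concentrated in degree $0$ with one summand exactly when $b=1$, giving $E^2_{0,q}=H_q(\SD(\oc{m_1}{n_1+1},\oc{m_2}{n_2}))$ and $E^2_{-p,q}=0$ for $p>0$. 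A comparison of spectral sequences (Eilenberg--Moore, \cite[Thm 5.5.11]{WeiHomAlg}) against the trivial filtration of $H_*(\SD(\oc{m_1}{n_1+1},\oc{m_2}{n_2}))$ finishes the one-step claim.

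There is one genuinely new point compared to the $\OO$ case: $\SD$ is a quotient of $\OC$ by the higher-valence black vertices and their boundaries, so I must check that all the constructions above are compatible with this quotient. Concretely, (i) the map $\beta_q$ descends to $\SD$, i.e.\ it sends the subcomplex generated by higher-valence graphs and their differentials into the corresponding subcomplex on the target—this is automatic since $\beta_q$ only forgets leaves attached at trivalent black or univalent white vertices and is zero otherwise, so it respects the valence filtration; and (ii) the cutting-around-white-vertices identification of Lemma 6.6 in \cite{WahWes08} is exactly what makes $\SD(\oc{m_1+-}{n_1},\oc{m_2}{n_2})$ the object to which Lemma~\ref{OCK}'s argument applies. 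I would state the $\SD$-analogues of Lemmas~\ref{OCK} and~\ref{OOn1} explicitly and remark that their proofs are the proofs given for $\OC$, read in the quotient category; the homology computations are unchanged because $H_*(\SD_S)$ still depends only on the combinatorial type $(S,b)$, and Proposition~\ref{cosimpcor} is used as a black box.

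The main obstacle I anticipate is verifying cleanly that passing to the quotient $\OC\to\SD$ does not disturb the spectral sequence argument—specifically, that the $E^1$-page for $(\SD\ot\CPart)$ really is $\bigoplus_b\bigoplus_S H_q(\SD_S(\oc{m_1}{n_1+b},\oc{m_2}{n_2}))\ot\Z\CPart_c(p+1,\coprod_b S^1\cup\del_I S)$ with the coHochschild $d^1$, which requires knowing that taking $H_0$-type quotients commutes appropriately with the filtration and with the $\beta_q$ splitting. Once that bookkeeping is in place, everything else is a direct transcription of Section~\ref{OCsec}. (One could alternatively try to deduce Theorem~\ref{H0OC} formally from Theorem~\ref{OC} by a quotient argument on the level of the functors $\Nat$, using that $\SD = \OC/(\text{higher valence})$ and that $\bNat$ is built functorially from $\e$; but since $\bNat_\e$ is not an exact functor of $\e$ in an obvious way, I expect the spectral-sequence route above to be the safer one.)
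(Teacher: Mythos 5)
Your overall architecture matches the paper's: reduce to an $\SD$-analogue of Lemma~\ref{OOn1} via an $\SD$-analogue of Lemma~\ref{OCK}, then run the filtration spectral sequence, Lemma~\ref{ZK}, Proposition~\ref{cosimpcor} and Eilenberg--Moore, and iterate using Proposition~\ref{htpy} and~\ref{conormalization}; this is exactly how the paper deduces Theorem~\ref{H0OC}. But there is a genuine gap at the one place where something new is needed: your claim that the proof that $\beta_q$ is a quasi-isomorphism ``is actually identical'' to the $\OC$ case, because $H_*(\SD_S)$ depends only on $(S,b)$ ``for the same mapping-class-group reason.'' That reason is not available here. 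In Lemma~\ref{OCK} the argument uses that each component of $\OC$ computes the homology of a moduli space $B\Ga(S;\dots)$, so adding marked points on a boundary component does not change the homology since the mapping class group fixing finitely many boundary points agrees with the one fixing the boundary. The components of $\SD$ do not compute $B\Ga$'s: they compute the homology of the harmonic compactification, which is not a classifying space of a mapping class group and whose homology is largely unknown (indeed producing nontrivial classes in it is one of the points of Section~\ref{non-trivial}). So the statement that the homology of $\SD_S(\oc{m_1+q}{n_1},\oc{m_2}{n_2})$-type components is insensitive to how the extra leaves $p_i$ are distributed along a boundary cycle has no geometric justification and cannot be imported from the $\OC$ case; it is precisely what must be proved, and it is where the quotient $\OC\to\SD$ genuinely changes the situation rather than being mere bookkeeping.

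The paper's Lemma~\ref{SDK} handles this by a different, purely combinatorial argument that exploits the defining relations of Sullivan diagrams: the map $\beta_q$ admits an explicit right inverse $\ga_q$, which re-attaches the forgotten leaves recorded in $P$ as a trivalent tree in the prescribed order (this is well defined in $\SD$, where the relation collapsing higher-valent black vertices makes the choice of attachment immaterial up to the relation, but would not give an inverse in $\OC$), and then an explicit chain homotopy $s$, sliding leaves past half-edges at the white vertices one at a time, shows $\ga_q\circ\be_q\simeq id$; the identity $ds_1+s_1d=G'-G$ again uses the Sullivan-diagram relation. So to complete your proof you must either supply this chain-homotopy argument (or an equivalent one), or give an independent proof that the homology of the relevant $\SD$-components is unchanged by adding leaves along boundary cycles; appealing to the mapping class group, as in the $\OC$ case, does not do it. The remaining steps of your proposal (compatibility of $\beta_q$ with the quotient, the spectral sequence, and the iteration) do transfer as you describe.
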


We will prove this theorem by adapting the proof of Theorem~\ref{OC} to the present case. We first need the analogue of Lemma~\ref{OCK}. 

\smallskip

Note that the components of the morphism complexes of Sullivan diagrams are in one to one correspondence with those of $\OC$ as the equivalence
relation defining Sullivan diagrams respects the components. 

Just as in the case of the category $\OC$, let 
$$(\SD\!\ot\! K)(\oc{m_1}{n_1},\oc{m_2}{n_2}):\ (\Ass)^{op} \rar \Comp$$ be defined on objects by 
$$(\SD\!\ot\! K)(\oc{m_1}{n_1},\oc{m_2}{n_2})(q)=\bigoplus_{b=0}^{q}\bigoplus_S \SD_S(\oc{m_1}{n_1+b},\oc{m_2}{n_2})\ot \Z\CPart_c(q,\textstyle{\coprod}_b S^1 \cup \del_IS) $$
where $S$ runs over the components of $\SD(\oc{m_1}{n_1+b},\oc{m_2}{n_2})$, 
and $\del_IS$ denotes the interval components of the free boundary of $S$. 
The differential is the differential in $\SD$.  On morphisms, $\Ass$ acts via its action on the second factor.

\begin{lem}\label{SDK} There is a  quasi-isomorphism
of functors $$i^*\SD(\oc{m_1+-}{n_1},\oc{m_2}{n_2})\simeq j^*(\SD\!\ot\! K)(\oc{m_1}{n_1},\oc{m_2}{n_2}):(\Ai)^{op} \rar \Comp.$$
\end{lem}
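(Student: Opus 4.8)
The plan is to mimic the proof of Lemma~\ref{OCK} verbatim, replacing $\OC$ by $\SD$ throughout, and checking that the only nontrivial input—the computation of the homology on the left-hand side—still goes through. First I would define, for each $q$, the map
$$\beta_q:\SD(\oc{m_1+q}{n_1},\oc{m_2}{n_2})\rar \bigoplus_{b=0}^{q}\bigoplus_S\SD_S(\oc{m_1}{n_1+b},\oc{m_2}{n_2})\ot \CPart_c(q,\textstyle{\coprod}_b S^1\cup\del_IS)$$
exactly as in Lemma~\ref{OCK}: a Sullivan diagram is represented by a black and white graph with trivalent black vertices, and I forget the leaves $p_1,\dots,p_q$ (and their attaching chords), keeping in each all-$p_i$ boundary cycle only the first occurring such leaf, provided the degree is unchanged (equivalently, the forgotten leaves sat at trivalent black or univalent white vertices), and set $\beta_q(G)=0$ otherwise; the configuration data $P$ records the positions of the $p_i$ in $\del S$, supported on $\coprod_b S^1\cup\del_IS$. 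Since $\beta_q$ is defined at the level of black and white graphs by forgetting leaves, and the Sullivan-diagram equivalence relation (generated by boundaries of graphs with a single valence-$4$ black vertex) is compatible with forgetting leaves at trivalent vertices, $\beta_q$ descends to the quotient; this is the one extra check not present in Lemma~\ref{OCK}. The verification that $\beta_q$ is a chain map, and that the $\beta_q$ assemble into a natural transformation of functors on $(\Ai)^{op}$, is then word-for-word the argument in Lemma~\ref{OCK} (degree-$0$ morphisms of $\Ai$ act via $j:\Ai\to\Ass$ on $\CPart_c$, positive-degree morphisms are killed by $\beta$).

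It then remains to show each $\beta_q$ is a quasi-isomorphism. As before, $\beta_q$ is a bijection on components, since a Sullivan diagram with the $p_i$-leaves removed together with the recorded configuration $(S,P)$ determines the original component uniquely. The homological statement is the place where one must be slightly more careful than in Lemma~\ref{OCK}: there the homology of $\OC_S(\oc{m_1}{n_1+b},\oc{m_2}{n_2})$ was identified with that of a mapping class group and the key fact used was that the mapping class group of a surface fixing any number of points in a boundary component agrees with the one fixing the whole boundary component. For $\SD$ the morphism complexes compute the homology of the \emph{harmonic compactification} of moduli space (by \cite{EgaKup}), rather than of moduli space itself; so what I need is the analogous statement that $H_*(\SD_S(\oc{m_1}{n_1+b},\oc{m_2}{n_2}))$ depends only on $b$ and on the topological type of $S$, not on how many marked points sit on a given free interval or incoming closed boundary cycle. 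This should follow from the fact that adding a marked point to a boundary cycle of a Sullivan diagram (i.e.\ the corresponding coface map, which just doubles a point) is a homotopy equivalence of the relevant chain complexes—intuitively, a marked point on a Sullivan-diagram boundary cycle can be slid freely, and the two ways of doubling it land in the same component—exactly the statement that $\Z\CPart_c$ behaves here as a cosimplicial set whose associated complex has homology concentrated in degree $0$ (Proposition~\ref{cosimpcor}), applied to the free boundary circles and intervals.

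The main obstacle is thus the last point: establishing that the homology groups appearing on the left-hand side are insensitive to the number of points on a free/incoming boundary cycle of a Sullivan diagram. In the $\OC$ case this was a clean statement about mapping class groups; here I would instead argue directly at the chain level, observing that the part of the differential of $\SD_S$ that moves the extra marked points around is precisely the cosimplicial coface differential of $\CPart_c$ restricted to that boundary cycle, so that the spectral sequence of the filtration $F^s$ (as in Section~\ref{htpysec}) has $E^1$-term the homology of $\SD_S(\oc{m_1}{n_1+b},\oc{m_2}{n_2})$ tensored with $\Z\CPart_c(p+1,\coprod_b S^1\cup\del_IS)$ and $d^1$ the cosimplicial differential, whence by Proposition~\ref{cosimpcor} the $E^2$-term is concentrated in the column $p=0$ with exactly one surviving configuration when $b=1$ and none otherwise—giving $E^2_{0,q}=H_q(\SD(\oc{m_1}{n_1+1},\oc{m_2}{n_2}))$, which is what $\beta_q$ maps isomorphically onto. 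Once this is in place, an application of the Eilenberg–Moore comparison theorem \cite[Thm 5.5.11]{WeiHomAlg}, exactly as at the end of the proof of Lemma~\ref{OCK} (or of Lemma~\ref{OOn1}), finishes the argument.
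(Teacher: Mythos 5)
Your setup is fine and matches the paper: $\beta_q$ is defined exactly as in Lemma~\ref{OCK}, the extra check that it respects the Sullivan relation is correctly identified (the paper phrases this as $\beta_q$ being a quotient of the corresponding map for $\OC$), and you rightly observe that the mapping class group argument that finished Lemma~\ref{OCK} is no longer available for $\SD$. The gap is in your replacement for that argument. The spectral sequence you invoke is the one used in (the $\SD$ analogue of) Lemma~\ref{OOn1}: the filtration $F^s$ of Section~\ref{htpysec} is a filtration of the \emph{coHochschild} complex $D\big((\SD\ot K)(\oc{m_1}{n_1},\oc{m_2}{n_2})\big)(0)$ by the number $q$ of added points, and it simply does not exist for the object you need to handle here, namely $\SD(\oc{m_1+q}{n_1},\oc{m_2}{n_2})$ for a \emph{fixed} $q$. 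Worse, the $E^1$-term you write down, $H_*(\SD_S(\oc{m_1}{n_1+b},\oc{m_2}{n_2}))\ot\Z\CPart_c(p+1,\coprod_b S^1\cup\del_IS)$, is by construction the homology of the right-hand side of Lemma~\ref{SDK}; asserting that the left-hand side has the same $E^1$-term is exactly the statement that $\beta_q$ is a quasi-isomorphism, i.e.\ the lemma itself. So the argument is circular: Proposition~\ref{cosimpcor} concerns the cosimplicial set $\CPart_c$ alone and cannot deliver the key claim that the homology of a component of $\SD(\oc{m_1+q}{n_1},\oc{m_2}{n_2})$ is insensitive to where and how many $p_i$-leaves sit on a boundary cycle.

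What the paper does instead, and what your intuition (``a marked point on a Sullivan diagram can be slid freely'') actually needs to become, is an explicit chain-level retraction, available precisely because chords in a Sullivan diagram have length zero. One defines a right inverse $\gamma_q$ sending $G'\ot P$ to the graph obtained by reattaching the forgotten leaves as a trivalent tree in the order prescribed by $P$; then $\beta_q\circ\gamma_q=id$, and one constructs a chain homotopy $s$ with $sd+ds=id-\gamma_q\circ\beta_q$ by moving the leaves one at a time onto the adjacent white vertex (for a leaf attached at a trivalent black vertex one edge away from a white vertex, $s_1(G)$ is the graph with that leaf moved to the white vertex, and $d(s_1(G))+s_1(d(G))=G'-G$). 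This step uses the Sullivan equivalence relation in an essential way and is exactly what is unavailable in $\OC$, which is why Lemma~\ref{OCK} had to argue via mapping class groups while Lemma~\ref{SDK} cannot. Without this (or some substitute proof that adding boundary marked points does not change $H_*(\SD_S)$), your proposal does not close.
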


\begin{proof}
The map 
$$\beta_q: \SD(\oc{m_1+q}{n_1},\oc{m_2}{n_2})\rar \bigoplus_{b=0}^{q} \bigoplus_S\SD_S(\oc{m_1}{n_1+b},\oc{m_2}{n_2}) \ot \CPart_c(q,\textstyle{\coprod}_b S^1 \cup \del_IS).$$
is defined as in the case of $\OC$ by 
taking a Sullivan diagram $G$ with $n_2$ white vertices and $q+1$ leaves $p_0,\dots,p_q$ and $m_1+m_2+n_1$ leaves $\la_1,\dots,\la_{m_1+m_2+n_1}$ to $G'\ot P$ 
where $G'$ is obtained from $G$ by forgetting the leaves $p_0,\dots,p_q$ except for $p_{i_1},\dots,p_{i_b}$, the first occurring leaf in each boundary cycle of $G$ 
with only leaves labeled by 
$p_i$'s, provided that $G$ and $G'$ have the same degree. Otherwise we set $G'=0$.  
The second component, $P$, remembers the configuration defined by the leaves $p_i$ in $\del S$. 
The map $\be_q$ respects the equivalence relation defining Sullivan diagrams and is a chain map as it is a quotient of the same map for $\OC$. 
We are left to check that it is a quasi-isomorphism. 

In the present case, there is  a right inverse $\ga_q$ defined by taking $G\ot P$ to the graph obtained from $G$ by adding the forgotten leaves recorded in $P$ as a trivalent tree, 
in the ordering prescribed by $P$ (as in Figure~\ref{betaga}).
\begin{figure}[h]
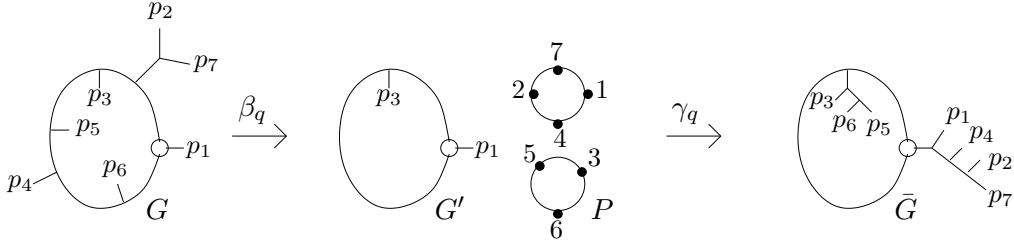

\begin{lpic}{betaga(0.5,0.5)}
\lbl[b]{58,25;$\be_q$}
\lbl[b]{172,25;$\ga_q$}
\lbl[b]{33,0;$G$}
\lbl[b]{110,0;$G'$}
\lbl[b]{150,0;$P$}
\lbl[b]{230,0;$\bar G$}
\lbl[b]{44,16;{\small $p_1$}}
\lbl[b]{34,53;{\small $p_2$}}
\lbl[b]{18,30;{\small $p_3$}}
\lbl[b]{-3,7;{\small $p_4$}}
\lbl[b]{15,22;{\small $p_5$}}
\lbl[b]{22,11;{\small $p_6$}}
\lbl[b]{46,39;{\small $p_7$}}
\lbl[b]{120,16;{\small $p_1$}}
\lbl[b]{94,30;{\small $p_3$}}
\lbl[b]{150,32;{\small $1$}}
\lbl[b]{139,19;{\small $4$}}
\lbl[b]{128,32;{\small $2$}}
\lbl[b]{138,42;{\small $7$}}
\lbl[b]{148,14;{\small $3$}}
\lbl[b]{131,15;{\small $5$}}
\lbl[b]{138,-5;{\small $6$}}
\lbl[b]{244,25;{\small $p_1$}}
\lbl[b]{255,13;{\small $p_2$}}
\lbl[b]{208,29;{\small $p_3$}}
\lbl[b]{250,20;{\small $p_4$}}
\lbl[b]{223,22;{\small $p_5$}}
\lbl[b]{214,23;{\small $p_6$}}
\lbl[b]{255,3;{\small $p_7$}}
\end{lpic}
\caption{The maps $\be_q$ and $\ga_q$ for Sullivan diagrams}\label{betaga}
\end{figure}
This is a natural transformation as the coHochschild and cosimplicial differentials agree under this map using the equivalence relation of Sullivan diagrams.

We have $\be_q\circ \ga_q=id$, so we are left to show that $\ga_q\circ \be_q\simeq id$. The failure of $\ga_q\circ \be_q$ to be the identity is on
graphs with leaves that are ``at or past some white vertices'' of 
their place after applying $\ga_q\circ \be_q$. The chain homotopy is defined by summing over the graphs with each leaf, one at a time, at the white
vertex which has to be past, for each white vertex. This defines a degree 1 map $s:\SD(\oc{m_1+q}{n_1},\oc{m_2}{n_2})\to
\SD(\oc{m_1+q}{n_1},\oc{m_2}{n_2})$ and we need to check that $sd+ds=id-\ga_q\circ \be_q$.  In fact, it is enough to check that we can pass leaves one
at a time: (we omit the sign calculation) for a graph $G$
with a leaf $\la_1$ attached to a trivalent black vertex separated by a single edge from a white vertex, define $s_1(G)$ to be the graph obtained from
$G$ by moving $\la$ to the white vertex. If $G$ is not equivalent to a graph of this form, then define $s_1(G)=0$. 
Then we have  $d(s_1(G))+s_1(d(G))=G'-G$ where $G'$ is obtained from $G$ by moving $\la_1$ to the next half-edge
attached at the white vertex. Indeed, the graphs $G$ and $G'$ are the ``new''  non-zero terms in $d(s_1(G))$.  
(In the example in Figure~\ref{betaga}, the homotopy is a sum of 3 graphs, with $p_7$, then $p_2$,
then $p_4$ attached in turn directly at the white vertex.) 
\end{proof}

\begin{proof}[Proof of Theorem~\ref{H0OC}]
The proof of the theorem then follows directly using the same argument as for Theorem~\ref{OC}: the analogue of Lemma~\ref{OOn1} holds, with
Lemma~\ref{SDK} replacing Lemma~\ref{OCK}, and this last result can then be used repeatedly to prove the theorem. 
\end{proof}



\section{Examples of non-trivial higher string topology operations}\label{non-trivial}

We give in this section, as an example, two infinite families of cycles of increasing degree in the chain complex of Sullivan diagrams, associated
to surfaces of increasing genus (resp.~increasing number of boundary components), which induce a non-trivial action on the Hochschild homology of the 
cohomology algebra $H^*(S^n)$ for any $n\ge 2$, also over the rationals. As explained in the introduction, these operations can be used to define
rational string topology operations for simply-connected manifolds using \cite{lambrechts_stanley}
and \cite{jones} (see also \cite{felix_thomas} or \cite[Sec 6.6]{WahWes08}), which are non-trivial as they act non-trivially on 
$HH_*(H^*(S^n),H^*(S^n))\cong H^*(LS^n)$.

Once the classes are constructed, it will be easy to check that they represent cycles in the chain complex of Sullivan diagrams. The fact that they
act non-trivially on the Hochschild homology of a particular algebra will then imply that they
represent non-trivial homology classes. Very little is known about the homology of the chain complex of Sullivan diagrams. The method presented here 
can be used to detect additional families of classes, and gives us a first insight in the homology of this chain complex.

\medskip

For each $g\ge 1$, we construct two black and white graphs $\mu_g$ and $t_g$ as follows:  
The graph $\mu_g$ is obtained from the graph $l_{2g+2}$, a single white vertex with $2g+2$ leaves, 
by adding a tree connecting the odd entries $1,3,\dots,2g+1$ together and labeling the remaining
leaves $1,2,\dots,g+1$ in their cyclic order. (See Figure~\ref{tgmg} for a picture of $\mu_1,\mu_2$ and $\mu_3$ both as the black and white graphs just
described, and in the more classical picture of Sullivan chord diagrams using the isomorphism given in \cite[Thm 5.9]{WahWes08}.)
\begin{figure}[h]
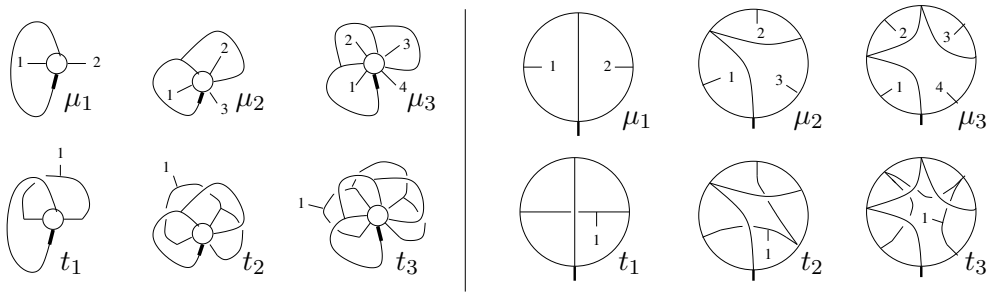

\begin{lpic}{tgmg(0.48,0.48)}
\lbl[b]{20,50;$\mu_1$}
\lbl[b]{67,50;$\mu_2$}
\lbl[b]{114,50;$\mu_3$}
\lbl[b]{18,5;$t_1$}
\lbl[b]{68,5;$t_2$}
\lbl[b]{110,5;$t_3$}
\lbl[b]{173,45;$\mu_1$}
\lbl[b]{220,45;$\mu_2$}
\lbl[b]{265,45;$\mu_3$}
\lbl[b]{171,5;$t_1$}
\lbl[b]{221,5;$t_2$}
\lbl[b]{266,5;$t_3$}
\end{lpic}
\caption{The graphs $t_g$ and $\mu_g$ for $g=1,2,3$ as black and white graphs and as classical Sullivan diagrams}\label{tgmg}
\end{figure}

The graph $t_g$ is then constructed from $\mu_g$ by adding a second tree connecting the remaining leaves at the white vertex, with a
leaf on the new tree as shown in the figure for $t_1,t_2$ and $t_3$. 

The graphs $\mu_g$ and $t_g$ both have degree $2g+1$. 
It is easy to check that they have boundary 0: they have an even number of faces, and each face $d_{2i}$ cancels with the face
$d_{2i+1}$. Hence the graphs are cycle. The computation given below of a non-trivial action in homology using these graphs implies that they
represent non-trivial homology classes. 

We consider $\mu_g$ as living in $\SD(\oc{0}{g+1},\oc{0}{1})$, that is each leaf is considered as marking an incoming closed boundary component. The
topological type of the associated cobordism, which is the surface obtained by thickening the graph (with the white vertex defining a boundary
component), is a surface of genus 0 with $g+2$ boundary components, $g+1$ of which being incoming and the last one being outgoing. 

The graph $t_g$ is considered as living in $\SD(\oc{0}{1},\oc{0}{1})$. The surface obtained by thickening the has  two boundary components, the
incoming and the outgoing one, and an Euler characteristic computation shows that it has genus $g$. 
In the category of Sullivan diagrams, $t_g$ is actually the precomposition of $\mu_g$ with a representative for the $g+1$--legged coproduct of degree 0 (the dual of the $g$--fold Chas-Sullivan product).

\medskip

Consider the Frobenius algebra $A=\Z[x]/(x^2)$ concentrated in degree 0, with the coproduct given by  $\nu(1)=1\ot x + x\ot 1$ and $\nu(x)=x\ot x$.
This is an $H_0(\OO)$--algebra. (Up to degree shift and signs, this is the cohomology algebra of $S^n$.) 
The reduced Hochschild complex of $A$ has two generating class in each degree, namely $1\ot x\ot\cdots\ot x$ and $x\ot x\ot\cdots\ot x$, and the
differential is 0 except on the terms $1\ot x\ot\cdots \ot x$ in degree $2i$, which have differential $2(x\ot\cdots\ot x)$ in degree $(2i-1)$. In
particular, the classes $1\ot x\ot\cdots \ot x$ in odd degree are cycles representing non-trivial homology classes in $HH_*(A,A)$, and so are the classes 
$x\ot\cdots \ot x$, though those living in odd degrees in this case only generate a $\Z/2$ in homology. 

\begin{prop}\label{tprop}
Let $A=\Z[x]/(x^2)$  be the $H_0(\OO)$--algebra defined above and let 
$$F: \OC(\oc{0}{1},\oc{0}{1})\ot C_*(A,A)\rar C_*(A,A)$$
be the action of Sullivan diagrams of Theorem~\ref{H0OC} on the Hochschild complex of $A$. Let $t_g\in\SD(\oc{0}{1},\oc{0}{1})$,
and $x\in C_0(A,A)$ be as above. 
Then $$F(t_g\ot x)=1\ot x\ot \cdots\ot x\in C_{2g+1}(A,A).$$ In particular, 
$H_*(F(t_g\ot -)):HH_0(A,A)\to HH_{2g+1}(A,A)$ is non-zero, also with rational coefficients. 
\end{prop}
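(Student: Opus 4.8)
The plan is to compute $F(t_g \ot x)$ directly from the explicit description of the action $J_{H_0}$ and the definition of $t_g$ as a black and white graph. First I would recall from Theorem~\ref{H0OC} and the earlier discussion that the action of a Sullivan diagram $G \in \SD(\oc{0}{1},\oc{0}{1})$ on $C_*(A,A) = C(\Phi)(0)$ for $\Phi$ the $H_0(\OO)$--algebra $A$ is obtained by feeding a Hochschild chain into the unique incoming closed boundary (the incoming white vertex, after cutting) and reading off the output at the outgoing white vertex. Concretely, since $t_g$ has degree $2g+1$ and one incoming and one outgoing closed boundary, the component of $J_{H_0}(t_g)$ relevant to an input of Hochschild degree $0$, i.e.\ to $x \in C_0(A,A) = A$, lands in $C_{2g+1}(A,A) = A^{\ot (2g+2)}$. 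So the computation reduces to evaluating a single explicit labelled graph-operation on the element $x \in A$.

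Next I would carry out the graph evaluation. The graph $t_g$ is built from $l_{2g+2}$ (one white vertex, $2g+2$ leaves) by attaching the odd leaves $1,3,\dots,2g+1$ to one trivalent tree (this is $\mu_g$) and then the even leaves $2,4,\dots,2g+2$ to a second trivalent tree carrying the extra leaf. Under $J_{H_0}$, gluing the input Hochschild $0$--chain $x$ at the single incoming leaf, the first tree acts by the iterated coproduct $\nu$ of $A$ (the black trivalent vertices, read in the appropriate direction, are comultiplications), and the white vertex together with the second tree re-collects the outputs cyclically. The key arithmetic facts are $\nu(x) = x \ot x$ and $\nu(1) = 1\ot x + x\ot 1$, together with the Frobenius/counit relations: iterating $\nu$ on $x$ produces $x^{\ot k}$, and the counit $\varepsilon$ (dual to the unit) satisfies $\varepsilon(x)=1$, $\varepsilon(1)=0$. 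Tracking which of the $2g+2$ leaves carries $1$ and which carry $x$, one finds exactly one factor equal to $1$ (forced by the counit relations that make all but one slot collapse to $x$) and the remaining $2g+1$ factors equal to $x$; checking the cyclic ordering prescribed by the white vertex shows the $1$ sits in the first slot, giving $F(t_g\ot x) = 1\ot x\ot\cdots\ot x \in C_{2g+1}(A,A)$. The sign works out to $+1$ because $A$ is concentrated in degree $0$ and $x$ has even internal degree, so no Koszul signs intervene.

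Finally, I would deduce the non-triviality statement. From the description of the reduced Hochschild complex of $A = \Z[x]/(x^2)$ recalled just before the proposition, the class $1\ot x\ot\cdots\ot x$ in odd degree $2g+1$ is a cycle whose only potential boundary source, a class in even degree $2g+2$, maps to a multiple of $x^{\ot(2g+1)}$ rather than to $1\ot x^{\ot 2g}$; hence $1\ot x\ot\cdots\ot x$ represents a non-zero class in $HH_{2g+1}(A,A)$, and this class is non-zero even after tensoring with $\QQ$ since it is not a torsion class (it generates a $\Z$ summand). Meanwhile $x \in C_0(A,A)$ is visibly a cycle representing a non-zero class in $HH_0(A,A)$. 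Therefore $H_*(F(t_g\ot-)) : HH_0(A,A)\to HH_{2g+1}(A,A)$ sends a generator to a non-zero element and is non-zero rationally. The main obstacle I expect is purely bookkeeping: getting the identification of the black trivalent vertices with coproducts (rather than products) correct in the conventions of the paper, keeping track of the cyclic ordering at the white vertex so that the distinguished $1$ lands in the stated slot, and confirming that the degree-$2g+1$ component is the only one contributing on a degree-$0$ input; none of these is conceptually hard but each requires care with the graph-to-operation dictionary of \cite{WahWes08}.
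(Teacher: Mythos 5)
Your overall strategy is the paper's: evaluate the graph operation directly via the coproduct of $A$ and then read off non-triviality from the explicit description of the reduced Hochschild complex of $\Z[x]/(x^2)$. But the central computational step is justified incorrectly, and the error is not mere bookkeeping. No counit is ever applied when evaluating $t_g$: all legs of the two trees other than the single leaf of $t_g$ are attached to the white vertex, and there are no outgoing open boundaries, so the relations $\varepsilon(x)=1$, $\varepsilon(1)=0$ play no role whatsoever, and the claim that exactly one tensor slot is $1$ ``forced by the counit relations'' has no basis. What actually happens (and what the paper's proof does) is the following: the input $x$ sits at the leaf of the \emph{second} tree, which therefore contributes the iterated coproduct $\nu^g(x)=x\ot\cdots\ot x$, while the \emph{first} tree has no input at all and contributes $\nu^g(1)=1\ot x\ot\cdots\ot x + x\ot 1\ot x\ot\cdots\ot x+\dots+x\ot\cdots\ot x\ot 1$, a sum of $g+1$ terms each containing one unit; you have the roles of the two trees interchanged, which is precisely why your account never locates where the single $1$ comes from. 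The reason only the leading term survives is not an algebraic identity in $A$ but the fact that the action of Theorem~\ref{H0OC} is on the \emph{reduced} Hochschild complex $\bC$, in which insertions of the unit in positions $\ge 2$ are quotiented out; in the unreduced complex one would obtain a sum of $g+1$ chains, not the single chain $1\ot x\ot\cdots\ot x$ asserted in the statement. Without this reducedness observation your argument does not establish the chain-level identity $F(t_g\ot x)=1\ot x\ot\cdots\ot x$.

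The remainder of your argument is sound and agrees with the paper: the interleaving of the two coproduct outputs at the white vertex produces $1\ot x\ot\cdots\ot x$ with $2g+1$ factors $x$, and non-triviality follows because $x$ and $1\ot x\ot\cdots\ot x$ are cycles each generating a free $\Z$--summand in homology (hence the map is non-zero rationally as well). One small slip there: the boundary of the degree $2g+2$ generator $1\ot x^{\ot (2g+2)}$ is $2\,x^{\ot(2g+2)}$ (lying in degree $2g+1$), not a multiple of $x^{\ot(2g+1)}$; the conclusion you draw from it is nevertheless correct.
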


The following corollary follows from the fact that $t_g$ can be expressed in Sullivan diagrams as the $g$--fold composition of the coproduct
corresponding to the dual of the Chas-Sullivan product, which takes  $x\in C_0(A,A)$ to $x\ot\cdots\ot x\in C_0(A,A)^{\ot g+1}$, followed by $\mu_g$,  
though the result can  also be checked directly.

\begin{cor}
The class $\mu_g\in\SD(\oc{0}{g+1},\oc{0}{1})$ induces  a non-trivial map 
 $HH_0(A,A)^{\ot g+1}\to  HH_{2g+1}(A,A)$ for $A=\Z[x]/(x^2)$ as in the proposition. 
\end{cor}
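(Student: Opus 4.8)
The plan is to deduce the corollary directly from Proposition~\ref{tprop} together with the factorization of $t_g$ through $\mu_g$ described in the last paragraph of Section~\ref{SDsec}. First I would recall the key structural fact stated there: in the category $\SD$ of Sullivan diagrams, $t_g \in \SD(\oc{0}{1},\oc{0}{1})$ equals the composition $\mu_g \circ \delta_g$, where $\delta_g \in \SD(\oc{0}{1},\oc{0}{g+1})$ is a degree-0 representative of the $(g+1)$-legged coproduct (the dual of the $g$-fold Chas--Sullivan product). Applying the functor $J_{H_0}$ of Theorem~\ref{H0OC}, and hence the action $F$ on $C_*(A,A)$, this factorization persists: the operation $F(t_g \otimes -)\colon C_*(A,A) \to C_*(A,A)$ factors as $F(\mu_g \otimes -) \circ F(\delta_g \otimes -)$, where $F(\delta_g\otimes-)\colon C_*(A,A)\to C_*(A,A)^{\otimes g+1}$.

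Next I would compute $F(\delta_g \otimes x)$ for $x = x \in C_0(A,A)$. Since $\delta_g$ has degree $0$ and represents the iterated coproduct dual to the Chas--Sullivan product, and since the coproduct on $A = \Z[x]/(x^2)$ in degree $0$ sends $x$ to $x\otimes x$ (as $\nu(x) = x\otimes x$), the iterated version sends $x \in C_0(A,A)$ to $x \otimes \cdots \otimes x \in C_0(A,A)^{\otimes g+1}$. This is a short explicit check using the graph description of $\delta_g$ and the formula for the action: gluing the closed incoming boundary cycle of $\delta_g$ to a length-one Hochschild chain just reads off iterated comultiplications, all of which send $x$ to tensor powers of $x$ with no sign or coefficient issues since $A$ is concentrated in degree $0$ and the relevant structure constant is $1$.

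Combining the two steps, $F(\mu_g \otimes (x\otimes\cdots\otimes x)) = F(t_g \otimes x) = 1 \otimes x \otimes \cdots \otimes x \in C_{2g+1}(A,A)$ by Proposition~\ref{tprop}. As recalled just before Proposition~\ref{tprop}, the class $1\otimes x\otimes\cdots\otimes x$ in odd degree $2g+1$ is a cycle representing a non-trivial (in fact $\Z$-nontrivial, hence also $\QQ$-nontrivial) class in $HH_{2g+1}(A,A)$, while $x \in C_0(A,A)$ represents a generator of $HH_0(A,A) \cong A$. Therefore the induced map $HH_0(A,A)^{\otimes g+1} \to HH_{2g+1}(A,A)$ is non-zero on the element $x\otimes\cdots\otimes x$, proving the corollary. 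I would also note, as the statement permits, that one can verify $\mu_g$ is a cycle and compute $F(\mu_g\otimes -)$ directly by the same face-cancellation and explicit-action arguments used for $t_g$, bypassing the factorization entirely.

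The only mild obstacle is making the factorization $t_g = \mu_g \circ \delta_g$ fully precise at the chain level in $\SD$ — one must exhibit $\delta_g$ as an honest morphism in $\SD(\oc{0}{1},\oc{0}{g+1})$ and check that composing its graph with $\mu_g$'s graph, under the gluing rule for $\SD$, yields exactly $t_g$ (up to the Sullivan-diagram equivalence relation); but this is a direct inspection of the graphs in Figure~\ref{tgmg}, and since the paper explicitly asserts this factorization, the corollary is essentially immediate once $F$'s functoriality (Theorem~\ref{H0OC}) and Proposition~\ref{tprop} are in hand.
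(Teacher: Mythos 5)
Your argument is correct and is essentially the paper's own proof: the corollary is deduced from Proposition~\ref{tprop} via the factorization of $t_g$ as the $(g+1)$-legged coproduct (sending $x$ to $x\otimes\cdots\otimes x$) followed by $\mu_g$, with the remark that a direct check is also possible. No gaps to flag.
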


\begin{proof}[Proof of Proposition~\ref{tprop}]
Indeed, following the recipe given in \cite[6.2]{WahWes08} for reading of the action of a graph on an algebra, we put $x$ at the leaf of $t_g$ and
read off the graph minus the white vertex as a composition of operations in $A$. In the case of $t_g$,  this graph is the union of the two trees used
to construct the graph above. 
The first tree, as an operation on $A$, is the iterated coproduct $\nu^q(1)$, and the second tree is 
an iterated coproduct $\nu^g(x)=x\ot\cdots\ot x$.  We have 
$\nu^g(1)=1\ot x\ot\cdots\ot x + x\ot 1\ot x\ot \cdots\ot x + \dots + x\ot\cdots\ot x\ot 1$, but only the first term gives a non-zero result in the
reduced Hochschild complex. The resulting Hochschild chain is an intertwine of the two iterated coproduct, as prescribed by the way the trees are
attached to the white vertex. Explicitly, it is the class  $1\ot x\ot \cdots \ot x$ with $2g+1$ $x$'s, as announced. 

For the statement about the action in homology, we just note that $x$ and  $1\ot x\ot \cdots \ot x$ both  are cycles representing non-trivial homology
class, each generating a free $\Z$--summand. 
\end{proof}


We consider briefly the case of the 
cohomology algebra $H^*(S^n)$, which is a degree shifted version of the algebra $A$ above. Just as $A$ 
it is generated by two classes: $1\in H^0(S^n)$ and $x\in H^n(S^n)$, with $1$ a unit for the product, and
$x.x=0$. The coproduct $\nu$ has degree $n$
and satisfies $\nu(1)=1\ot x \pm x\ot 1$ and $\nu(x)=x\ot x$. (The sign does not play a role here.) 
Again just as above, the reduced Hochschild complex of $H^*(S^n)$ is generated by a class $1\ot x\ot \cdots\ot x$, though now  in total degree $in-i$ for each $i\ge 0$, and 
$x\ot \cdots\ot x$, now in total degree $in-i+1$ for each $i\ge 1$. When $n$ is odd, the differential of the complex is identically 0, and when $n$ is
even, it is 0 except in degrees $2in-2i$, where it is multiplication by 2. In particular, the collection 
$$1,1\ot x, x, (1\ot x\ot x), x\ot x, 1\ot x\ot x\ot x, x\ot x\ot x, (1\ot x\ot x\ot x\ot x), x\ot x\ot x \ot x,\dots$$
ordered in increasing degree, represents a collection of generator of the Hochschild homology of $H^*(S^n)$, 
where the generators in bracket are only to be used for $n$ is odd. 
Under the isomorphism $HH_{-*}(H^*(S^n),H^*(S^n))\cong H^*(LM))$ for $n\ge 2$,  the class $[x]\in HH_0(H^*(S^n),H^*(S^n))$ is dual to  the
$n$-dimensional class of constant loops coming from the inclusion $S^n\inc LS^n$. 

The graded vector space $H^*(S^n)$ is a dimension $n$ Frobenius algebra in the sense of \cite[Sec 6.4]{WahWes08}, and the action of Sullivan diagrams
is replaced by a degree shifted version of the above action. The calculation goes through in the exact same way and there are no sign issues as signs
did not play a role in the computation. 
Hence $t_g$ and $\mu_g$ define non-trivial string topology operations.


\section{Appendix A: The chain complex associated to a cosimplicial set}\label{cosimpsec}

In this section, we show that the chain complex associated to a cosimplicial set has homology concentrated in degree 0, generated by the elements in
cosimplicial degree 0 on which  $d_0$ and $d_1$ agree.  
This result was suggested to us by Bill Dwyer. It can be found in alternative form in \cite[Prop 23.10]{May67},
\cite[2.2-2.4]{Bou87}, \cite[Lem 4.1]{Goo98}. 

\medskip

We denote by $\De$ the category of finite sets $[n]=\{0,\dots,n\}$, $n\ge 0$,  and monotone maps. A cosimplicial set is a functor $X^\bullet:\De\to Sets$. 
Recall that the coface map $d^i:X^q\to X^{q+1}$ is the image under $X^\bullet$ of the injection, also denote  $d^i:[q]\inc [q+1]$, which omits $i$, and the 
codegeneracy $s^j:X^{q}\to X^{q-1}$ is the image of the surjection $s^j:[q]\surj [q-1]$ mapping $j$ and $j+1$ to $j$. 
Any injection in $\De$ is a composition of cofaces and any surjection is a composition of codegeneracies.  

\begin{lem}\label{cosimplem}
Let $X^\bullet$  be a cosimplicial set, and $x\in X^q$ a $q$--simplex for some $q\ge 0$. 
There exists a unique $y\in \cup_{p\le q} X^p$ so that $x=D(y)$ for $D=d^{i_1}\dots\, d^{i_{q-p}}\in \De(p,q)$ a composition of
coface maps (i.e.~an injection) and $y$  not itself a coface. Moreover, the injection $D$ is unique as a morphism in $\De$ 
unless $y\in X^0$ and $d^0y=d^1 y$. In that last case,
we have $Dy=D'y$ for any injection $D,D'\in \De(0,q)$. 
\end{lem}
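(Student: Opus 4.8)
The plan is to exploit the classical Eilenberg--Zilber (or ``epi-mono'') factorization in $\De$, dualized to the cosimplicial setting. First I would recall that every morphism in $\De$ factors uniquely as a surjection followed by an injection; since we only ever apply cofaces and codegeneracies to build $x$ from lower simplices, the relevant bookkeeping is about injections. The existence part is immediate: if $x\in X^q$ is itself in the image of some coface $d^i$, write $x=d^i(x')$ and recurse on $x'$, which lives in $X^{q-1}$; since the cosimplicial degree strictly drops, this terminates at some $y\in X^p$ that is not a coface. So the content of the lemma is the \emph{uniqueness} of the pair $(y,D)$, with the stated exception.

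For uniqueness, suppose $D y = D' y'$ with $D\in\De(p,q)$, $D'\in\De(p',q)$ both injections and neither $y$ nor $y'$ a coface. I would argue by downward induction on $q$. If $q=p$ then $D=\mathrm{id}$ and $y=x$ is not a coface, forcing $p'=q$ as well (else $D'$ is a nontrivial injection exhibiting $x$ as a coface), so $y'=x=y$ and $D'=\mathrm{id}$. For the inductive step, if $p<q$ pick $i\notin\Img(D)\subseteq[q]$; I want to ``cancel'' $d^i$ on the left. Choose a codegeneracy $s^j$ with $j$ adjacent to $i$ so that $s^j d^i=\mathrm{id}$ on $[q-1]$ by the cosimplicial identity $s^jd^j=1=s^jd^{j+1}$, and so that $s^j$ precomposed with the rest of $D$ still makes sense; using the simplicial identities $s^jd^k=d^{k'}s^{j'}$ for $k\neq j,j+1$, I can rewrite $s^j\circ D$ as a genuine injection $\tilde D\in\De(p,q-1)$ with $D=d^{?}\circ\tilde D$ up to reindexing. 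Applying $X^\bullet(s^j)$ to the equation $Dy=D'y'$ then yields $\tilde D y = X^\bullet(s^j)(D'y')$, and a parallel manipulation of $D'$ reduces this to an equality of the same form but in $X^{q-1}$; the induction hypothesis finishes it, except precisely in the degenerate situation where the cancellation is ambiguous, which is exactly when $y=y'\in X^0$ and the two cofaces $d^0,d^1:X^0\to X^1$ agree on $y$ --- in that case $D^{}y=D'y$ for \emph{all} injections $D,D'\in\De(0,q)$ since any two such injections differ by a sequence of adjacent transpositions realized by the relations among the $d^i$, each of which acts trivially once $d^0y=d^1y$.

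The main obstacle I anticipate is getting the exceptional case cleanly isolated: the cancellation argument via codegeneracies works verbatim \emph{unless} at some stage the element being transported is a $0$-simplex on which $d^0$ and $d^1$ coincide, because then distinct injections $[0]\to[q]$ need not be distinguishable after pushing forward, and one must check that in this (and only this) situation \emph{all} injections $\De(0,q)$ give the same value. Concretely, I would verify that if $d^0y=d^1y$ in $X^1$ then $d^id^jy$ is independent of $i,j$ in $X^2$ using $d^jd^i=d^id^{j-1}$ for $i<j$, and then induct upward on $q$; conversely, if $y\in X^p$ with $p\ge 1$ is not a coface, I'd show two distinct injections into $[q]$ must differ in their image sets and hence, after applying a separating codegeneracy, yield different reduced data, contradicting minimality of $y$. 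Everything else is a routine, if somewhat tedious, manipulation of the cosimplicial identities already listed in the proof of Lemma~\ref{CPart}.
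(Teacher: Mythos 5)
Your existence argument and your handling of the exceptional case (the upward induction showing that once $d^0y=d^1y$ all composites of cofaces agree on $y$, via $d^jd^i=d^id^{j-1}$) are fine and agree with the paper. The genuine gap is in the central inductive step. After choosing $i\notin\Img(D)$ and applying the adjacent codegeneracy $s^j$, the ``parallel manipulation of $D'$'' need not produce an equality of the same form: if both $j$ and $j+1$ lie in $\Img(D')$, then $s^jD'$ is not injective and factors as $D''s^k$ with $s^k$ a genuine codegeneracy, so the reduced equation reads $\tilde D y = D''(s^k y')$, and nothing guarantees that $s^ky'$ is not a coface; the induction hypothesis then compares $y$ with the minimal form of $s^ky'$ rather than with $y'$, and yields neither $y=y'$ nor any control on $D,D'$. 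Even when $s^jD'$ stays injective the reduction can be vacuous: for $y\in X^1$ not a coface, the (impossible) equation $d^0y=d^1y$ reduces under $s^0$ (your adjacent choice for $i=0$) to the tautology $y=y$, since $s^0d^0=\mathrm{id}=s^0d^1$; the contradiction is only visible through $s^1$, which gives $y=d^0(s^0y)$ against minimality of $y$. So the assertion that the induction ``finishes it'' except exactly when $y=y'\in X^0$ with $d^0y=d^1y$ is precisely the point left unproved, and it is the point you yourself flag as the main obstacle.

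The repair is essentially the paper's argument, which cancels all of $D$ at once instead of one coface at a time: choose a surjection $S$ with $SD=\mathrm{id}$, so $y=SD'y'$, and epi--mono factor $SD'=D_3S_3$; since $y$ is not a coface, $D_3=\mathrm{id}$, hence $y=S_3y'$, and by symmetry $y'=S_3'y$, which forces $p=p'$, $S_3=\mathrm{id}$ and $y=y'$ --- this handles arbitrary overlap of $\Img(D)$ and $\Img(D')$ in one step. Uniqueness of $D$ for $p\ge 1$ then follows because two distinct injections in $\De(p,q)$ can be separated by a left inverse $S$ (with $SD=\mathrm{id}$ but $SD'\ne\mathrm{id}$), and the same factorization argument turns $y=SD'y$ into a contradiction with minimality; your closing sentence gestures at this, but note that a single codegeneracy need not suffice and that this separation argument, not the degree-lowering induction, is what actually carries the proof. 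Finally, for $p=0$ one argues directly: if $d^0y\ne d^1y$, a suitable surjection from $[q]$ onto a $1$-simplex sends $D$ and $D'$ to $d^0$ and $d^1$ and so distinguishes $Dy$ from $D'y$.
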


\begin{proof}
Suppose $x=D_1y_1=D_2y_2$ for $D_1,D_2$ compositions of cofaces and $y_1\in X^{p_1}$, $y_2\in X^{p_2}$ which are not themselves cofaces. Let $S_1$ be a composition of
codegeneracies (i.e.~a surjection) such that $S_1D_1=id\in \De(p_1,p_1)$. Then $y_1=S_1D_1y_1=S_1D_2y_2$. Now write $S_1D_2=D_3S_3\in \De(p_2,p_1)$ 
for some $D_3,S_3$ compositions of
cofaces and codegeneracies respectively---this is possible as any map in $\Delta$ can be written as a composition of a surjection followed by an
injection. As $y_1$ is not a coface, we
must have $D_3=id$ and $y_1=S_3y_2$. By symmetry, we also have that $y_2=S'_3y_1$ for some composition of codegeneracies $S_3'$. But this can only 
happen if $p_1=p_2$, and $S_3=S_3'=id$. Hence  $y_1=y_2$. 

For the uniqueness of $D$, suppose first that $y$ is not in cosimplicial degree 0. As we had $S_1D_1=D_3S_3$ in the above computation, with $S_1$ any
left inverse of $D_1$, we can conclude from the above that $S_1D_2=id$ for any left inverse $S_1$
of $D_1$  in the category $\Delta$. But if $D_1\neq D_2$, the corresponding maps in $\Delta$ are different, and if the
source of $D_1,D_2$ is not 0, then there exists an $S_1$ such that $S_1D_1=id$ in $\Delta$ but $S_1D_2\neq id$. 

We are left with the case $y$ in degree 0. (Here we have that $S_1D_1=id$ for any $D_1:[0]\inc [q]$ and the unique surjection $S_1:[q]\surj[0]$.) 
Suppose first that $d^0y=d^1y$. We then prove by induction that $Dy=d^0\dots\, d^0y$ for any composition of coface maps. Indeed, suppose
it is true for compositions of length $k-1$. Then write $D$ of length $k$ as $D=d^id^0\dots\, d^0$. As $d^id^0=d^0d^{i-1}$ for any $i>0$,
we have that $Dy=d^id^0\dots\, d^0y=d^0d^{i-1}d^0\dots\, d^0y=d^0\dots\, d^0y$. 

Finally, if $d^0y\neq d^1y$, we can distinguish different compositions $D_1, D_2$ using the following fact: if $D_1,D_2:[0]\to[q]$ are compositions
of coface maps in $\Delta$ with $D_1(0)>D_2(0)$, 
there exists a composition of degeneracies $S:[q]\to[2]$ such that $SD_1=d_0$ and $SD_2=d_1$. Then we must have $D_1(y)\neq D_2(y)$ as 
for such a choice of $S$ we have $SD_1(y)\neq SD_2(y)$.  
\end{proof}

By a {\em semi-cosimplicial set}, we mean a functor $Y^\bullet:\De^{inj}\to Sets$, where $\De^{inj}$ is the subcategory of $\De$ with the same objects
but with only monotone injections as morphisms. So a semi-cosimplicial set has coface maps satisfying the usual relations, but no codegeneracies. 
Let $\Inj(r)^\bullet$ be the semi-cosimplicial set represented by $r$, i.e.~with $\Inj(r)^q=\De^{inj}(r,q)$, and with the coface maps
defined by composition.  It is the free semi-cosimplicial set generated by one element in cosimplicial degree $r$. 

\smallskip

The above lemma associates to every simplex $x$ in a cosimplicial set $X^\bullet$, a unique minimal $y$ such that $x=Dy$. Moreover, the lemma shows
that, if $d^0y\neq d^1y$ then $Dy\neq D'y$ for any injection $D\neq D'$ in $\De$, i.e.~$y$ generates a free semi-cosimplicial set in $X^\bullet$, 
and if $d_0y=d_1y$, then $Dy=D'y$ for all $D,D'$ of the same
length, i.e.~$y$ generates a constant sub-cosimplicial set (with $y$ necessarily in degree 0). 
This proves the following:

\begin{cor}\label{splitcor}
Any cosimplicial set $X^\bullet$ splits as a semi-cosimplicial set as the disjoint union of constant subcosimplicial sets, one for each $y\in X^0$ such
that $d^0y=d^1y$,  and free semi-cosimplicial sets $\Inj(r)^\bullet$ for various $r$'s. 
\end{cor}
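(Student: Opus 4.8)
The plan is to derive Corollary~\ref{splitcor} as a direct packaging of Lemma~\ref{cosimplem}. The statement asserts that, \emph{as a semi-cosimplicial set} (i.e.\ forgetting codegeneracies), $X^\bullet$ decomposes as a disjoint union of pieces of two kinds: a constant sub-cosimplicial set for each $y\in X^0$ with $d^0y=d^1y$, and a free semi-cosimplicial set $\Inj(r)^\bullet$ for each ``minimal'' simplex $y$ in degree $r$ with $d^0y\neq d^1y$ (when $r=0$) or simply not a coface (when $r>0$). So the first step is to define the partition: by Lemma~\ref{cosimplem}, every simplex $x\in X^q$ can be written uniquely as $x=D(y)$ with $D$ an injection in $\De$ and $y$ not itself a coface; declare $x\sim x'$ if they have the same underlying minimal $y$. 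This is visibly an equivalence relation on $\coprod_q X^q$, and it is respected by the coface maps (applying another coface to $x=Dy$ just lengthens $D$, leaving $y$ fixed), so the equivalence classes are sub-semi-cosimplicial sets and $X^\bullet=\coprod_y X^\bullet_y$ as semi-cosimplicial sets, the union over the set of minimal simplices $y$.

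Next I would identify each summand $X^\bullet_y$ according to the dichotomy in Lemma~\ref{cosimplem}. If $y\in X^0$ and $d^0y=d^1y$, then the lemma says $Dy=D'y$ for all injections $D,D'$ of the same target degree $q$; hence $X^q_y=\{D y\}$ is a single element for every $q\ge 0$, and the coface maps between these singletons are forced, so $X^\bullet_y$ is the constant semi-cosimplicial set on a point (which is the restriction of a constant sub-cosimplicial set of $X^\bullet$). In the remaining case---$y$ in degree $r$, and if $r=0$ then $d^0y\neq d^1y$---the lemma gives that the map $D\mapsto Dy$ is a bijection from $\De^{inj}(r,q)$ onto $X^q_y$ for every $q$, compatibly with coface maps; this is exactly the statement that $X^\bullet_y\cong \Inj(r)^\bullet$, the free semi-cosimplicial set on one generator in degree $r$. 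Assembling the two cases over all minimal $y$ yields the claimed decomposition.

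I do not expect a serious obstacle here: the content is entirely in Lemma~\ref{cosimplem} (existence and uniqueness of the minimal $y$, and the uniqueness/collapse behaviour of the injection $D$), and Corollary~\ref{splitcor} is the bookkeeping that repackages it. The only points requiring a word of care are: (i) checking that the equivalence relation ``same minimal $y$'' is closed under coface maps---this uses that a coface applied to a composition of cofaces is again a composition of cofaces, so the minimal representative is unchanged; and (ii) being precise that the decomposition is asserted only at the level of semi-cosimplicial sets, since the constant pieces are genuinely sub-cosimplicial (codegeneracies preserve them) but the free pieces $\Inj(r)^\bullet$ are not closed under codegeneracies in general. With those observations in place the corollary follows immediately.
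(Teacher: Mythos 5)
Your proposal is correct and follows essentially the same route as the paper: the paragraph preceding the corollary in the text does exactly this bookkeeping, assigning to each simplex its unique minimal $y$ from Lemma~\ref{cosimplem} and noting that $y$ generates a constant piece when $y\in X^0$ with $d^0y=d^1y$ and a free piece $\Inj(r)^\bullet$ otherwise. Your extra remarks (closure of the partition under cofaces, and that the splitting is only semi-cosimplicial) are correct refinements of the same argument.
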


To a (semi)-cosimplicial set $X^\bullet$, we can associate a chain complex $\Z X:=\oplus_q \Z X^q$, with $\Z X^q$ in degree $-q$, and with differential 
$d_X=\sum (-1)^id^i$, the alternating sum of the coface maps.

\begin{prop}\label{cosimpprop}
For any $r\ge 0$, the chain complex $(\Z \Inj(r),d_{Inj})$ associated to the free semi-cosimplicial set $\Inj(r)^\bullet$ has no homology. 
\end{prop}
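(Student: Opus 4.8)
The plan is to introduce coordinates that exhibit $(\Z\Inj(r)^\bullet,d_{Inj})$ as an iterated tensor product of copies of one visibly contractible complex. First I would identify $\Inj(r)^q$, the set of monotone injections $\{0,\dots,r\}\hookrightarrow\{0,\dots,q\}$, with the set of $(r+1)$-element subsets $\{s_0<\dots<s_r\}$ of $\{0,\dots,q\}$, and then with the set of ``gap vectors'' $(g_0,\dots,g_{r+1})\in\N^{r+2}$ with $g_0+\dots+g_{r+1}=q-r$, where $g_0=s_0$, $g_j=s_j-s_{j-1}-1$ for $1\le j\le r$, and $g_{r+1}=q-s_r$ record the number of non-members of the subset lying in each of the $r+2$ gaps it cuts out of $\{0,\dots,q\}$. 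Under this bijection the $i$-th coface $d^i\colon\Inj(r)^q\to\Inj(r)^{q+1}$ (insertion of a new non-member in position $i$) simply adds $1$ to one of the coordinates $g_j$; moreover the set of indices $i$ that act on a fixed gap $j$ is a block of $g_j+1$ consecutive integers, so summing $\sum_i(-1)^id^i$ gap by gap gives
\[
d_{Inj}(g_0,\dots,g_{r+1})=\sum_{j\,:\,g_j\ \mathrm{even}}(-1)^{p_j}\,(g_0,\dots,g_j+1,\dots,g_{r+1}),\qquad p_j=g_0+\dots+g_{j-1}+j .
\]

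Next I would recognise the right-hand side as a tensor-product differential. Let $Q_\bullet$ be the bounded-below complex of free $\Z$-modules with $Q_m=\Z\langle q_m\rangle$ for $m\ge 0$ and differential $q_m\mapsto q_{m+1}$ for $m$ even, $q_m\mapsto 0$ for $m$ odd; a one-line check shows $Q_\bullet$ is contractible, with chain contraction $h(q_m)=q_{m-1}$ for $m$ odd and $h(q_m)=0$ for $m$ even. Replacing the differential of the $j$-th factor by its $(-1)^j$-twist (which changes neither homology nor contractibility), the Koszul signs of $\bigotimes_{j=0}^{r+1}Q_\bullet$ reproduce precisely the signs $(-1)^{p_j}$ above, and the number of basis elements in each homological degree agrees on both sides (namely $\binom{q+1}{r+1}$). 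Hence, up to regrading, $\Z\Inj(r)^\bullet$ is isomorphic to the $(r+2)$-fold tensor product $\bigotimes_{j=0}^{r+1}Q_\bullet$ of bounded-below complexes of free abelian groups, each acyclic; by the Künneth theorem, applied $r+1$ times (each factor being a complex of flat modules), this tensor product is acyclic, so $\Z\Inj(r)^\bullet$ has no homology.

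The one genuinely delicate step, and the one I expect to cost the most care, is the sign bookkeeping: checking the displayed formula for $d_{Inj}$ in gap coordinates, and verifying that the constant twists $(-1)^j$ turn it exactly into the Koszul differential of $\bigotimes Q_\bullet$. Everything else is routine. An equivalent but more hands-on presentation would write a chain contraction of $\Z\Inj(r)^\bullet$ directly on subsets — $h$ deletes one non-member from a gap of odd size and shifts the $s_k$ above it down by one — but the verification $dh+hd=\mathrm{id}$ involves the identical sign analysis, so I would favour the tensor-product packaging, in which the signs are dictated by the Koszul rule rather than guessed.
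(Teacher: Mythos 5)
Your argument is correct, and I verified the two points you flagged as delicate: in gap coordinates the cofaces hitting gap $j$ do form a block of $g_j+1$ consecutive indices starting at $p_j=g_0+\dots+g_{j-1}+j$, so the alternating sum collapses to your displayed formula, and the constant twist $(-1)^j$ on the $j$-th factor makes the Koszul signs of $\bigotimes_{j=0}^{r+1}Q_\bullet$ match $(-1)^{p_j}$ exactly; the Künneth step (or, even more simply, tensoring one factor's contracting homotopy with the identity) then finishes it. However, this is a genuinely different route from the paper's. The paper proves the proposition by exhibiting a single explicit chain contraction on $\Z\Inj(r)$ itself: $s(x)=(-1)^q x$, regarded as an injection into $[q-1]$, whenever the top element $q$ is not in the image of $x$, and $s(x)=0$ otherwise; the identity $sd+ds=\mathrm{id}$ is then a short direct check (only the top coface survives), in the spirit of the classical ``extra codegeneracy'' contraction. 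The paper's proof is shorter and needs no reindexing or Künneth input, while yours is more structural: it explains the complex as a twisted $(r+2)$-fold tensor power of one small acyclic complex, which in particular accounts for the $\binom{q+1}{r+1}$ count of basis elements and packages all signs into the Koszul rule rather than an ad hoc verification. Note also that your tensor decomposition does not literally recover the paper's homotopy (a contraction of the last tensor factor is supported on $g_{r+1}$ odd, whereas the paper's $s$ is supported on $g_{r+1}\ge 1$), so the two proofs are really distinct; both are valid.
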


\begin{proof}
In degree $q$, $\Z \Inj(r)$ is the free module on all injection $[r]\inc [q]$, and the boundary map $d:\Z \Inj(r)^q\to \Z \Inj(r)^{q+1}$ is the alternating
sum of the coface maps: $d=\sum_{i=1}^{q+1}(-1)^id^i$, where $d^i([r]\sta{x}{\rar}[q])$ is the composition $[r]\sta{x}{\rar} [q]\sta{d_i}{\rar} [q+1]$ where the
second map does not hit the $i$th element. 

Define a chain homotopy $s:\Z \Inj(r)^q\to \Z \Inj(r)^{q-1}$ by  $s([r]\sta{x}{\rar}[q])$ is $(-1)^qx$, considered as landing in $[q-1]$ if $q$ is not
in the image of $x$, and is 0 otherwise. 

Then if $q$ is in the image of $x$, we have $sd+ds([r]\sta{x}{\rar} [q])=s(\sum_{i=1}^{q+1} (-1)^i([r]\sta{x}{\rar}[q]\sta{d_i}{\rar}[q+1])+0$
equals $(-1)^{q+1}(-1)^{q+1}(x)$ as only the last term in the sum will not hit $q+1$. On the other hand, if $q$ is not in the image of
  $x$, then  
\begin{align*}sd+ds([r]\sta{x}{\rar} [q])\ =\ &s\Big(\sum_{i=1}^{q+1} (-1)^i([r]\sta{x}{\rar}[q]\sta{d_i}{\rar}[q+1])\Big)\\
&+(-1)^q\sum_{i=1}^{q} (-1)^i([r]\sta{sx}{\rar}[q-1]\sta{d_i}{\rar}[q])\end{align*}
As $x$ does not hit $q$, none of the terms in the first sum will hit $q+1$ and, given that $s$ in this case introduces a sign
$(-1)^{q+1}$, all but the last term will cancel with the terms in the second sum. The only term that does not cancel is equal to $x$.  
\end{proof}

We can now prove the result we use in Section~\ref{compsec}:

\begin{prop}\label{cosimpcor}
Let $X^\bullet$ be a cosimplicial set, and $C(X,M):=\oplus_q\Z X^q\ot M$ the associated chain complex with trivial coefficients in a
module $M$ and differential $d_X=\sum (-1)^id^i$.  Then $$H_*(C(X,M))=\bigoplus_{\begin{subarray}{c}x\in X^0\\ d^0x=d^1 x\end{subarray}}M$$ 
with each copy of $M$ concentrated in degree 0.  
\end{prop}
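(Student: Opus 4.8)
The plan is to deduce Proposition~\ref{cosimpcor} from the structural splitting already established in Corollary~\ref{splitcor} together with the vanishing computed in Proposition~\ref{cosimpprop}. First I would observe that the functor $X^\bullet \mapsto C(X,M) = \bigoplus_q \Z X^q \ot M$ sends disjoint unions of semi-cosimplicial sets to direct sums of chain complexes, since $\Z(X^q \sqcup Y^q) = \Z X^q \oplus \Z Y^q$ and tensoring with $M$ and the differential $d_X = \sum(-1)^i d^i$ are both compatible with this decomposition. Hence it suffices to compute the homology of $C(X,M)$ on each piece of the splitting of $X^\bullet$ as a semi-cosimplicial set provided by Corollary~\ref{splitcor}.

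Next I would treat the two types of pieces separately. For a free semi-cosimplicial set $\Inj(r)^\bullet$, Proposition~\ref{cosimpprop} gives that $(\Z\Inj(r), d_{Inj})$ is acyclic; tensoring with the flat... actually, rather than invoking flatness, I would note that the contracting homotopy $s$ constructed in the proof of Proposition~\ref{cosimpprop} is $\Z$-linear and natural, so $s \ot \mathrm{id}_M$ is a contracting homotopy for $C(\Inj(r), M) = \Z\Inj(r) \ot M$, whence this complex is acyclic for every $r$ and every $M$. For a constant sub-cosimplicial set generated by an element $x \in X^0$ with $d^0 x = d^1 x$, the associated semi-cosimplicial set has exactly one element in each cosimplicial degree $q$ (namely the unique iterated coface of $x$), so $C$ of it is the complex $\cdots \to M \xrightarrow{\partial} M \xrightarrow{\partial} M$ with $M$ in each nonpositive degree and $\partial = \sum_{i=0}^{q+1}(-1)^i \mathrm{id} = 0$ if $q+1$ is odd and $=\mathrm{id}$ if $q+1$ is even; this complex has homology $M$ concentrated in degree $0$. (Alternatively one writes it as the normalized chain complex of a constant simplicial-type object, which is $M[0]$.)

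Summing over all the pieces, $H_*(C(X,M)) = \bigoplus_{x \in X^0,\ d^0 x = d^1 x} M$ concentrated in degree $0$, which is the claim. The one point requiring care — and the main thing to get right rather than a genuine obstacle — is the verification that the splitting of Corollary~\ref{splitcor}, which is a splitting of the underlying \emph{semi}-cosimplicial set, really does induce a direct sum decomposition of $C(X,M)$ as a \emph{chain complex}: this is exactly the statement that the differential $d_X$ uses only the coface maps $d^i$ (no codegeneracies), so it respects the semi-cosimplicial decomposition, which is why it was set up that way in Corollary~\ref{splitcor}. Everything else is bookkeeping: matching the indexing $\Z X^q$ in degree $-q$, checking that the homotopy of Proposition~\ref{cosimpprop} survives $- \ot M$, and the trivial rank-one computation for the constant pieces.
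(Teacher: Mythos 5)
Your proposal is correct and follows essentially the same route as the paper's proof: split $X^\bullet$ via Corollary~\ref{splitcor}, observe that $d_X$ uses only coface maps so the splitting passes to $C(X,M)$, kill the free pieces by Proposition~\ref{cosimpprop}, and compute the constant pieces (the ``cochain complex of a point'') to contribute one copy of $M$ in degree $0$. Your remark that the contracting homotopy $s$ survives $-\ot M$ is a small extra precision that the paper leaves implicit, but it is the same argument.
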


\begin{proof}
By Corollary~\ref{splitcor} the cosimplicial set $X^\bullet$ splits as a semi-cosimplicial set into a disjoint union of free semi-cosimplicial sets
$\Inj(r)$ and one constant cosimplicial set for each $x\in X^0$ satisfying $d^0x=d^1x$. 
As the differential in $C(X,M)$ only uses the semi-cosimplicial set structure, we also get a splitting of $C(X,M)$ into
such direct summands. By  Proposition~\ref{cosimpprop}, the free semi-cosimplicial sets do not contribute anything to the homology. On the other hand, the chain
complex associated to a constant cosimplicial set is the cochain complex of a point, and hence contributes a copy of the module $M$ in degree 0. 
\end{proof}

\section{Appendix B: Graph complexes}\label{GraphApp}

In this appendix, we briefly recall the necessary ingredients to define a chain complex of graphs as used in the paper. We exemplify how this allows to replace certain algebraic equations by graph equations. 

\medskip

A {\em graph} is formally defined as a tuple $G=(V,H,s,i)$ where $V$ is its set of {\em vertices}, $H$ its set of {\em half-edges}, $s:H\to V$ is the {\em source map}, and $i:H\to H$ is an involution. Unless otherwise stated, graphs will be assumed to be at least 3-valent, i.e.~such that $|v|:=|s^{-1}(v)|\ge 3$ for all $v\in V$.  An {\em edge} in the graph is a pair of half-edges $(h,i(h))$, with $i(h)\neq h$. A {\em leaf} is a fixed point of the involution: $h=i(h)$. 
We allow also the exceptional graph made out of a single leaf and no vertices, for example to describe identity morphisms in e.g.~the category $\Ai$.
An {\em orientation} for a graph $G$ is a unit vector in $\det(\RR(V\sqcup H))$.

To a graph $G$, one can associate a 1-dimensional CW-complex, its {\em realization}, by attaching edges and leaves along their end vertices, in the way prescribed by the source map. A {\em tree} is a graph such that its realization is contractible. 
A {\em planar tree} is a tree, together with the additional data of an embedding of the realization of the tree into the plane.  

A {\em fat graph} is a graph equipped with a cyclic ordering of the half-edges $s^{-1}(v)$ at each vertex $v$. Planar graphs are examples of fat graphs, the embedding in the plane giving such a cyclic ordering at each vertex.

Given a graph $G=(V,H,s,i)$ and an edge $e=(h_1,h_2)$ of $G$ with $s(h_1)\neq s(h_2)$, one can define the collapse $G/e$ of $G$ along $e$ by removing the two half-edges $h_1,h_2$ and identifying their source vertices. If $G$ had a fat structure, $G/e$ inherits a fat structure. If $G$ is oriented,  $G/e$ inherits an orientation as follows: writing the orientation of $G$ as $v_1\w v_2\w h_1\w h_2\w X$ for $v_1=s(h_1)$ and $v_2=s(h_2)$, we defined the orientation of $G/e$ to be $v\w X$ where $v$ denotes the vertex obtained by identifying $v_1$ and $v_2$.  
Now a {\em blow-up} of an (oriented, fat) graph $G$ is an (oriented, fat) graph $\hat G$ such that there exists an edge $e$ in $\hat G$ with $\hat G/e\cong G$ as (oriented, fat) graphs.

Blow-ups can be used to define a chain complex of oriented fat graphs as follows. 
First we define the {\em degree} of a graph $G$ as $deg(G)=\sum_{v\in V}|v|-3$. Now 
one defines a chain complex which, in degree $d$, is the free $\Z$--module on the set of oriented degree $d$ fat graphs, modulo the relation that $(-1)$ reverses the orientation. The {\em differential} is defined as the sum of all possible blow-ups:  $$d(G)=\sum_{\begin{array}{l}(\hat G,e)\\\hat G/e\cong G\end{array}}\hat G$$
(See Figure~\ref{m3} for an example.)

\medskip

Graph complexes are used in the paper to describe $\Ai$--algebras, compute their Hochschild complex and compute operations on the Hochschild complex. 
We give here an example on how to go between graphs and algebra, and how graph orientations translate to signs. 
Hopefully it illustrates that, in an appropriate context, oriented graphs can be easier to work with than algebraic formulas with signs.

\begin{ex}[The map $m_2\in \Ai(2,1)$ is homotopy associative.]\label{m3ex}
{\rm 
Let $m_2$ be the graph with one vertex $v$ and three leaves $h_0,h_1,h_2$ attached to it. (So $s(h_i)=v$ and $i(h_i)=h_i$ for all $i$.
) We give $m_2$ the ``left-right top-down'' orientation $h_1\w h_2\w v\w h_0$, and we interpret $m_2$ as a morphism from $2$ to $1$ in $\Ai$ by labeling $h_1$ and $h_2$ as the first and second incoming leaves, and $h_0$ as the root/outgoing leave. Similarly, let $m_3$ be the graph with a single vertex $w$ and leaves $l_0,l_1,l_2,l_3$ attached to it, with orientation $l_1\w l_2\w l_3\w w\w l_0$. 

The graph $m_3$ has degree 1. Its differential has two terms, shown in Figure~\ref{m3}, as there are two ways of blowing $w$ up. 
Both blow-ups can be described as having vertex set $V=\{w_1,w_2\}$, half-edges $\{l_0,l_1,l_2,l_3,e_1,e_2\}$ with $i(e_1)=e_2$, $s(e_i)=w_i$, though with different source maps for the half-edges $l_i$, as described by the picture. To compute their orientation, we first write  
$l_1\w l_2\w l_3\w w\w l_0=-w\w l_1\w l_2\w l_3\w l_0$. Now 
by definition of the differential, both terms come with the orientation $-w_1\w w_2\w e_1\w e_2\w  l_1\w l_2\w l_3\w l_0$. 

The oriented graph $m_3$ defines an element in $\Ai(3,1)$ with $l_1,l_2,l_3$ the three incoming leaves and $l_0$ the outgoing leaf. 
We would like to compare the terms in its differential to the compositions $m_2\circ(m_2 + id)$ and $m_2\circ(id + m_2)$. As graphs, these are indeed to two terms, but the orientation of $m_2\circ(m_2 + id)$ is $(l_1\w l_2\w w_1 \w e_1) \w (e_2\w l_3\w w_2\w l_0)$  and that of 
 $m_2\circ(id + m_2)$ is $(l_2\w l_3\w w_1\w e_1) \w (l_1\w e_2\w w_2\w l_0)$, the orientation of a composition being the juxtaposition of the orientations. One then checks that, with the above choices of oriented representatives for $m_2,m_3$, we have $d(m_3)=m_2\circ(m_2 + id)-m_2\circ(id + m_2)$.
}\end{ex}

\begin{figure}[h]
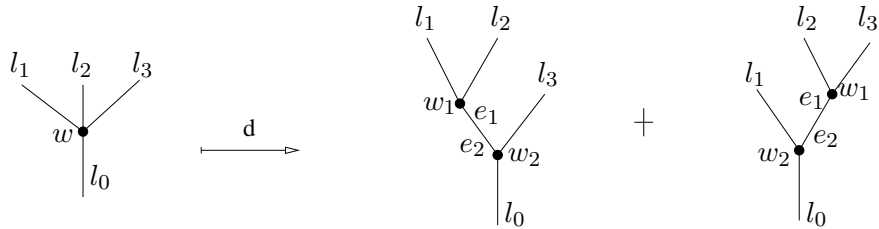

\begin{lpic}{m3(0.62,0.62)}
\lbl[b]{9,18;$w$}
\lbl[b]{0,32;$l_1$}
\lbl[b]{13,32;$l_2$}
\lbl[b]{26,32;$l_3$}
\lbl[b]{17,8;$l_0$}
\lbl[b]{89,24;$w_1$}
\lbl[b]{107,13;$w_2$}
\lbl[b]{86,42;$l_1$}
\lbl[b]{102,42;$l_2$}
\lbl[b]{112,30;$l_3$}
\lbl[b]{105,0;$l_0$}
\lbl[b]{99,22;$e_1$}
\lbl[b]{96,15;$e_2$}
\lbl[b]{177,27;$w_1$}
\lbl[b]{160,13;$w_2$}
\lbl[b]{156,30;$l_1$}
\lbl[b]{167,42;$l_2$}
\lbl[b]{180,42;$l_3$}
\lbl[b]{169,0;$l_0$}
\lbl[b]{168,25;$e_1$}
\lbl[b]{171,17;$e_2$}
\end{lpic}
\caption{The differential of $m_3$}\label{m3}
\end{figure}

More generally, the left-right top-bottom orientation gives a ``canonical'' orientation for planar  rooted trees (or trees with a leaf specified as output) and this can be used to translate any formula given in terms of oriented graphs in $\Ai$ as an algebraic formula in terms of $m_i$'s. Likewise, one can choose a canonical orientation for the graph $l_k$ of Section~\ref{sec1} and translate graph equations for the Hochschild complex of a dg-algebra to algebraic equations.   

\medskip 

We refer the reader to \cite{WahWes08}, in particular Sections 2,3 and 7 for more details and further examples of graph complexes modeling algebraic structures. 

\bibliography{biblio}

\end{document}